\newcommand{\cA}{\mathcal{A}}
\newcommand{\cC}{\mathcal{C}}
\newcommand{\cD}{\mathcal{D}}
\newcommand{\cF}{\mathcal{F}}
\newcommand{\cM}{\mathcal{M}}
\newcommand{\cN}{\mathcal{N}}
\newcommand{\cS}{\mathcal{S}}
\newcommand{\cW}{\mathcal{W}}
\newtheorem{thm}{Theorem}[section]
\newtheorem{cor}[thm]{Corollary}
\newtheorem{lem}[thm]{Lemma}
\newtheorem{prop}[thm]{Proposition}
\newtheorem{example}[thm]{Example}
\theoremstyle{definition}
\newtheorem{define}[thm]{Definition}
\theoremstyle{remark}
\newtheorem{rem}[thm]{Remark}
\DeclareMathOperator{\Pro}{Pro}
\DeclareMathOperator{\Ind}{Ind}
\DeclareMathOperator{\Hom}{Hom}
\DeclareMathOperator{\Mor}{Mor}
\DeclareMathOperator{\precolim}{colim}
\def\colim{\mathop{\precolim}}
\def \mcal{\mathcal}
\DeclareTextFontCommand{\textcyr}{\fontencoding{OT2}\fontfamily{wncyr}\fontseries{m}\fontshape{n}\selectfont}
\begin{document}
\title{Model Structures on Ind-Categories and the Accessibility Rank of Weak Equivalences}

\author{
Ilan Barnea \footnote{The first author is supported by the Alexander von Humboldt Professorship of Michael Weiss of the University of Muenster.} \and
Tomer M. Schlank \footnote{The second author is supported by the Simons fellowship in the Department of Mathematics of the Massachusetts Institute of Technology.}}

\maketitle

\begin{abstract}
In a recent paper we introduced a much weaker and easy to verify structure than a model category, which we called a ``weak fibration category". We further showed that a small weak fibration category can be ``completed" into a full model category structure on its pro-category, provided the pro-category satisfies a certain two out of three property. In the present paper we give sufficient intrinsic conditions on a weak fibration category for this two out of three property to hold. We apply these results to prove theorems giving sufficient conditions for the finite accessibility of the category of weak equivalences in combinatorial model categories. We apply these theorems to the standard model structure on the category of simplicial sets, and deduce that its class of weak equivalences is finitely accessible. The same result on simplicial sets was recently proved also by Raptis and Rosick\'y, using different methods.
\end{abstract}

\tableofcontents

\section{Introduction}
In certain respects, in the algebraic approach to homotopy theory, the basic object of study is a category $\cC$ endowed with a class of morphisms $\cW$, called weak-equivalences, that should be considered as ``isomorphisms  honoris causa''. If the class of weak-equivalences is well behaved, we say that $(\cC,\cW)$ is a relative category:

\begin{define}\label{d:rel0}
A \emph{relative category} is a pair $(\cC,\cW)$, consisting of a category
$\cC$, and a subcategory $\cW\subseteq \cC$ that contains all the
isomorphisms and satisfies the two out of three property; $\cW$ is called the \emph{subcategory of weak-equivalences}.
\end{define}

The data of a relative category is enough to define most of the constructions needed in homotopy theory such as mapping spaces, homotopy limits, derived functors, etc. In fact, a relative category is one of the models for the abstract notion of an $(\infty,1)$-category, which enables to define these concepts via universal properties (see \cite{BaKa}, \cite{Lur}).

Alas, in a relative category, it is in practice very hard to ensure the existence of wanted objects or to carry out any computations.
Thus, working effectively in a relative category $(\cC,\cW)$ is usually achieved by adding some extra structure.
The most prevalent example is the structure of a model category defined by Quillen in \cite{Qui}.
Model categories, albeit very useful, admit quite a ``heavy" axiomatization. A model category consists of relative category $(\cC,\cW)$ together with two subcategories $\cF,\cC of$ of $\cC$ called \emph{fibrations} and \emph{cofibrations}. The quadruple $(\cC,\cW,\cF,\cC of)$ should satisfy many axioms. (We refer the reader to \cite{Hov} for the modern definition of a model category.) The axioms for a model category are often very hard to verify, and furthermore, there are situations in which there is a natural definition of weak equivalences and fibrations; however, the resulting structure is not a model category. (Note that the structure of a model category is determined by the classes of weak equivalences and fibrations, since the class of cofibrations is then determined by a left lifting property.)

In \cite{BaSc1} we introduced a structure that is easier to verify and much weaker than a model category; we called it a ``weak fibration category". A weak fibration category consists of a relative category $(\cC,\cW)$ together with one subcategory $\cF$ of $\cC$ called \emph{fibrations}, satisfying certain axioms (see Definition \ref{d:weak_fib}). In \cite{BaSc1} we show that a weak fibration category can be ``completed" into a full model category structure on its pro-category, provided it satisfies conditions which we call ``pro-admissible" and ``homotopically small". The property of being homotopically small is a bit technical and we will not need it here. What is important for us here is that any small weak fibration category is homotopically small.

The property of pro-admissibility is easier to define, and we now bring a definition. Let $(\cC,\cW,\cF)$ be a weak fibration category. We say that a morphism in $\Pro(\cC)$ is in $Lw^{\cong}(\cW)$ if it is isomorphic, as a morphism in $\Pro(\cC)$, to a natural transformation which is level-wise
in $\cW$.
\begin{rem}
It is not hard to see that $Lw^{\cong}(\cW)$ is the essential image of $\Pro(\cW)$ under the natural equivalence $\Pro(\cC^{\to})\to\Pro(\cC)^{\to}$
(where $\cW$ is considered as a full subcategory of $\cC^{\to}$).
\end{rem}
We say that the weak fibration  category $(\cC,\cF,\cW)$ is \emph{pro-admissible} if $$(\Pro(\cC),Lw^{\cong}(\cW))$$
is a relative category (or in other words if $Lw^{\cong}(\cW)$ satisfies the two out of three property).
From \cite[Theorem 4.8]{BaSc1} it easily follows (for more details see Theorem \ref{t:model}):

\begin{thm}\label{t:model0}
Let $(\cC,\cW,\cF)$ be a small pro-admissible weak fibration category.
Then there exists a model category structure on $\Pro(\cC)$ such that:
\begin{enumerate}
\item The weak equivalences are $\mathbf{W} := Lw^{\cong}(\cW)$.
\item The cofibrations are $\mathbf{C} :=  {}^{\perp} (\cF\cap \cW)$.
\item The fibrations are maps satisfying the right lifting property with respect to all acyclic fibrations.
\end{enumerate}
Moreover, this model category is $\omega$-cocombinatorial, with $\mcal{F}$ as the set of generating fibrations and $\mcal{F}\cap \mcal{W}$ as the set of generating acyclic fibrations.
\end{thm}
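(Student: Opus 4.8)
The plan is to deduce everything from \cite[Theorem 4.8]{BaSc1} together with standard facts about pro- and ind-categories of small categories. Recall that \cite[Theorem 4.8]{BaSc1} asserts precisely that a weak fibration category which is \emph{homotopically small} and \emph{pro-admissible} can be completed to a model structure on its pro-category, with weak equivalences $Lw^{\cong}(\cW)$, cofibrations ${}^{\perp}(\cF\cap\cW)$, and fibrations the maps having the right lifting property against the acyclic fibrations. Since $(\cC,\cW,\cF)$ is small it is in particular homotopically small (as recalled in the introduction), and pro-admissibility is assumed; hence the model structure exists and its classes $\mathbf{W}$, $\mathbf{C}$, $\mathbf{F}$ are as stated. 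A careful account of how \cite[Theorem 4.8]{BaSc1} specializes to this setting is exactly the content of Theorem \ref{t:model}, so in what follows I would concentrate on the ``moreover'' clause.

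For the cocombinatoriality, recall that a model category $\cM$ is $\omega$-cocombinatorial precisely when $\cM^{\mathrm{op}}$ is $\omega$-combinatorial, i.e.\ locally finitely presentable and cofibrantly generated by sets of morphisms whose domains and codomains are finitely presentable. Here $\Pro(\cC)^{\mathrm{op}}$ is canonically equivalent to $\Ind(\cC^{\mathrm{op}})$, and since $\cC^{\mathrm{op}}$ is a small category, $\Ind(\cC^{\mathrm{op}})$ is locally finitely presentable, with every object of $\cC^{\mathrm{op}}$ finitely presentable in it; dually, every object of $\cC$ is $\omega$-cocompact in $\Pro(\cC)$. Thus the underlying category has the correct size properties at rank $\omega$, and it remains only to exhibit the generating sets.

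Dualizing, I would show that $\cF^{\mathrm{op}}$ is a set of generating cofibrations and $(\cF\cap\cW)^{\mathrm{op}}$ a set of generating acyclic cofibrations for the model structure on $\Ind(\cC^{\mathrm{op}})$. One input is already recorded in the theorem: $\mathbf{C}={}^{\perp}(\cF\cap\cW)$, which dualizes to the statement that the cofibrations of $\Ind(\cC^{\mathrm{op}})$ are the maps with the left lifting property against $(\cF\cap\cW)^{\mathrm{op}}$, while the fibrations of $\Ind(\cC^{\mathrm{op}})$ are by construction those with the right lifting property against the acyclic cofibrations. What must still be checked is that the two relevant weak factorization systems are \emph{generated} by these sets, i.e.\ that the needed factorizations can be produced by the (dual) small object argument. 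For this one uses that $\cF$ and $\cF\cap\cW$ are sets (because $\cC$ is small), with domains and codomains finitely presentable in $\Ind(\cC^{\mathrm{op}})$, so the small object argument runs and terminates at $\omega$; combining this with the retract and lifting properties of the classes already established in \cite{BaSc1} identifies the cofibrantly generated weak factorization systems with the model-categorical ones.

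I expect the main obstacle to be exactly this last compatibility step: the factorizations in \cite{BaSc1} are built more or less by hand inside $\Pro(\cC)$ rather than via the small object argument, so one has to verify, via the usual retract argument, that they agree with the factorizations cogenerated by $\cF$ and $\cF\cap\cW$, and that nowhere in the presentability and accessibility estimates is a cardinal larger than $\omega$ forced — the latter being guaranteed precisely by the smallness of $\cC$, which makes all the relevant bounds finite. Everything else (local finite presentability of $\Ind(\cC^{\mathrm{op}})$, applicability of the small object argument, stability of the generating sets under the dual cell constructions) is routine.
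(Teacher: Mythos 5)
Your overall route coincides with the paper's: existence of the model structure is quoted from \cite[Theorem 4.8]{BaSc1} (with smallness giving homotopical smallness), and the size part of $\omega$-cocombinatoriality comes from \cite{AR} applied to $\Ind(\cC^{op})\cong\Pro(\cC)^{op}$. Two corrections to that part, though. First, local finite presentability of $\Ind(\cC^{op})$ does not follow from smallness of $\cC^{op}$ alone: one needs $\cC^{op}$ to have finite colimits, i.e.\ the weak fibration category axiom that $\cC$ has finite limits; without it $\Ind(\cC^{op})$ is only finitely accessible, not cocomplete. Second, since the definition of model category in force (that of \cite{Hov}) demands functorial factorizations, citing \cite[Theorem 4.8]{BaSc1} verbatim is not quite enough; the paper gets functoriality by replacing \cite[Proposition 3.16]{BaSc1} with \cite[Proposition 3.17]{BaSc1}, which is possible precisely because $\cC$ is small. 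This is minor, but your first paragraph glosses over it.

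The more serious point is that the step you single out as the ``main obstacle'' is both unnecessary and, as formulated, circular. With the (dualized) definition of cofibrant generation in \cite{Hov}, no comparison between the factorizations of \cite{BaSc1} and those produced by a (co)small object argument is required: since every object of $\cC$ is $\omega$-copresentable in $\Pro(\cC)$, it suffices to establish the two equalities $\mathbf{C}={}^{\perp}(\cF\cap\cW)$ and $\mathbf{C}\cap\mathbf{W}={}^{\perp}\cF$. The first is definitional here; the second is the genuinely nontrivial one, and it is exactly what the paper quotes from \cite[Theorem 4.8]{BaSc1}. Your retract-argument plan cannot produce it on its own: given $f\in{}^{\perp}\cF$, the dual small object argument factors $f$ as a map in ${}^{\perp}\cF$ followed by a map built from $\cF$ by base changes and inverse transfinite composition, and the retract argument only exhibits $f$ as a retract of the first factor, i.e.\ of another map merely known to lie in ${}^{\perp}\cF$; if instead you use the model-structure factorization, retracting $f$ onto the acyclic-cofibration factor requires $f$ to lift against the fibration factor, i.e.\ $f\in\mathbf{C}\cap\mathbf{W}$, which is the conclusion sought. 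The nonformal input---that the relevant maps are (essentially levelwise) weak equivalences---comes from the factorization propositions and the pro-admissibility hypothesis inside the proof of \cite[Theorem 4.8]{BaSc1}, so the correct finish is simply to quote the two displayed equalities from that theorem, as the paper does, rather than to re-run and match up small object arguments.
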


\begin{rem}
\begin{enumerate}
\item Note that by abuse of notation we consider morphisms of $\cC$ as morphisms of $\Pro(\cC)$ indexed by the trivial diagram.
\item A more explicit description of the fibrations in this model structure can be given, but this requires some more definitions. We give the more detailed theorem in the appendix  (see Theorem \ref{t:model_elaborate}).
\item A model category is said to be \emph{cocombinatorial} if its opposite category is \emph{combinatorial}. Combinatorial model categories were introduced by J. H. Smith as model categories which are locally presentable and cofibrantly generated (see for instance the appendix of \cite{Lur}). If $\gamma$ is a regular cardinal, we also follow J. H. Smith and call a model category \emph{$\gamma$-combinatorial} if it is combinatorial and both cofibrations and trivial cofibrations are generated by sets of morphisms having $\gamma$-presentable domains and codomains.
\end{enumerate}
\end{rem}

The pro-admissibility condition on a weak cofibration category $\cC$, appearing in Theorem \ref{t:model0}, is not intrinsic to $\cC$. It is useful to be able to deduce the pro-admissibility of $\cC$ only from conditions on $\cC$ itself. One purpose of this paper is to give one possible solution to this problem. This is done in Section \ref{s:proper}.

Everything we have discussed so far is completely dualizable. Thus we can define the notion of an \textbf{ind}-admissible weak \textbf{cofibration} category, and show:

\begin{thm}\label{t:model_dual0}
Let $(\mcal{M},\mcal{W},\mcal{C})$ be a small ind-admissible weak cofibration category.
Then there exists a model category structure on $\Ind(\mcal{M})$ such that:
\begin{enumerate}
\item The weak equivalences are $\mathbf{W} := Lw^{\cong}(\mcal{W})$.
\item The fibrations are $\mathbf{F} = (\mcal{C}\cap \mcal{W})^{\perp} $.
\item The cofibrations are maps satisfying the left lifting property with respect to all acyclic fibrations.
\end{enumerate}
Moreover, this model category is $\omega$-combinatorial, with $\mcal{C}$ as the set of generating cofibrations and $\mcal{C}\cap \mcal{W}$ as the set of generating acyclic cofibrations.
\end{thm}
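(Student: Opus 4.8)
The plan is to deduce Theorem~\ref{t:model_dual0} from Theorem~\ref{t:model0} by a formal passage to opposite categories; essentially no new argument is needed, only careful bookkeeping. The preliminary observations are: (i) $(\mcal{M},\mcal{W},\mcal{C})$ is a small weak cofibration category precisely when $(\mcal{M}^{\mathrm{op}},\mcal{W}^{\mathrm{op}},\mcal{C}^{\mathrm{op}})$ is a small weak fibration category, with $\mcal{C}^{\mathrm{op}}$ playing the role of the class of fibrations; (ii) the standard identification $\Ind(\mcal{M})=\bigl(\Pro(\mcal{M}^{\mathrm{op}})\bigr)^{\mathrm{op}}$; and (iii) under (ii) the class $Lw^{\cong}(\mcal{W})$ in $\Ind(\mcal{M})$ corresponds to $Lw^{\cong}(\mcal{W}^{\mathrm{op}})$ in $\Pro(\mcal{M}^{\mathrm{op}})$, because the equivalence $\Pro(\cC^{\to})\to\Pro(\cC)^{\to}$ (and its ind-analogue) commutes with passing to opposites and carries level-wise $\mcal{W}$-transformations to level-wise $\mcal{W}^{\mathrm{op}}$-transformations and isomorphisms to isomorphisms. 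From (iii) it is immediate that $\mcal{M}$ is ind-admissible, i.e.\ $(\Ind(\mcal{M}),Lw^{\cong}(\mcal{W}))$ is a relative category, if and only if $\mcal{M}^{\mathrm{op}}$ is pro-admissible.

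Granting this, I would apply Theorem~\ref{t:model0} to the small pro-admissible weak fibration category $\mcal{M}^{\mathrm{op}}$. Its fibration class is $\mcal{C}^{\mathrm{op}}$, so ``$\mcal{F}\cap\mcal{W}$'' there is $(\mcal{C}\cap\mcal{W})^{\mathrm{op}}$, and the theorem yields an $\omega$-cocombinatorial model structure on $\Pro(\mcal{M}^{\mathrm{op}})$ with weak equivalences $Lw^{\cong}(\mcal{W}^{\mathrm{op}})$, cofibrations ${}^{\perp}\bigl((\mcal{C}\cap\mcal{W})^{\mathrm{op}}\bigr)$, and with generating fibrations $\mcal{C}^{\mathrm{op}}$ and generating acyclic fibrations $(\mcal{C}\cap\mcal{W})^{\mathrm{op}}$. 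Transporting this along the identification $\bigl(\Pro(\mcal{M}^{\mathrm{op}})\bigr)^{\mathrm{op}}=\Ind(\mcal{M})$, and using the elementary self-dual fact that a model structure on a category $\cD$ is the same datum as a model structure on $\cD^{\mathrm{op}}$ with fibrations and cofibrations interchanged and weak equivalences unchanged (so that $\omega$-cocombinatorial becomes $\omega$-combinatorial with the corresponding generating sets), produces a model structure on $\Ind(\mcal{M})$ with weak equivalences $Lw^{\cong}(\mcal{W})$; fibrations $\bigl({}^{\perp}((\mcal{C}\cap\mcal{W})^{\mathrm{op}})\bigr)^{\mathrm{op}}=(\mcal{C}\cap\mcal{W})^{\perp}$; cofibrations, since this is a genuine model structure, equal to the maps with the left lifting property against all acyclic fibrations; and $\omega$-combinatorial with generating cofibrations $\mcal{C}$ and generating acyclic cofibrations $\mcal{C}\cap\mcal{W}$. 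That is exactly the assertion to be proved.

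I expect no genuine obstacle here. The only steps needing care are checking duality (iii), i.e.\ that $Lw^{\cong}(-)$ is compatible with opposites so that ind-admissibility of $\mcal{M}$ and pro-admissibility of $\mcal{M}^{\mathrm{op}}$ genuinely coincide, and tracking correctly which ``$\perp$'' conditions get reversed under dualization (as in $({}^{\perp}S)^{\mathrm{op}}=(S^{\mathrm{op}})^{\perp}$). If one wants the more explicit description of the fibrations alluded to in the remark after Theorem~\ref{t:model0}, one simply runs the same dualization starting from the elaborated statement Theorem~\ref{t:model_elaborate} rather than Theorem~\ref{t:model0}.
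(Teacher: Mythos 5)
Your proposal is correct and is essentially the paper's own route: the paper states Theorem \ref{t:model_dual} as ``a dual formulation of Theorem \ref{t:model}'' and obtains it by exactly this passage to opposite categories, using $\Ind(\mcal{M})^{\mathrm{op}}\cong\Pro(\mcal{M}^{\mathrm{op}})$, the identification of ind-admissibility of $\mcal{M}$ with pro-admissibility of $\mcal{M}^{\mathrm{op}}$, and the standard dualization of the lifting classes and of (co)combinatoriality. Your bookkeeping of $({}^{\perp}S)^{\mathrm{op}}=(S^{\mathrm{op}})^{\perp}$ and of $Lw^{\cong}(-)$ under opposites is precisely the content the paper leaves implicit.
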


Model categories constructed using Theorem \ref{t:model_dual0} have some further convenient property, namely, their class of weak equivalences is finitely accessible, when viewed as a full subcategory of the morphism category (we follow the terminology of \cite{AR} throughout this paper). This means that it is of the form $\Ind(\cD)$ for some small category $\cD$. This assertion follows from the observation that $Lw^{\cong}(\cW)$ is the essential image of $\Ind(\cW)$ under the natural equivalence $\Ind(\cC^{\to})\to\Ind(\cC)^{\to}$, where $\cW$ is considered as a full subcategory of $\cC^{\to}$. It then follows that $\Ind(\cW)$ is a full subcategory of $\Ind(\cC^{\to})$, and thus $\Ind(\cW)\simeq Lw^{\cong}(\cW)$.

In \cite{BaSc1} we have applied (a generalization of) Theorem \ref{t:model0} to a specific weak fibration category (namely, the category of simplicial sheaves over a Grothendieck site, where the weak equivalences and the fibrations are local in the sense of Jardine) to obtain a novel model structure in its pro-category.
In this paper we also consider an application of Theorem \ref{t:model0} (or rather of its dual version, Theorem \ref{t:model_dual0}), but in a reverse direction. Namely, we begin with an $\omega$-combinatorial model category $\underline{\cM}$ and ask whether the model structure on $\underline{\cM}$ is induced, via Theorem \ref{t:model_dual0}, from a weak cofibration structure on its full subcategory of finitely presentable objects. The main conclusion we wish to deduce from this is the finite accessibility of the class of weak equivalences in $\underline{\cM}$, as explained above.

While we were writing the first draft of this paper, Raptis and  Rosick\'y published a paper with some related results \cite{RaRo}. In their paper, Raptis and  Rosick\'y mention that while the class of weak equivalences in any combinatorial model category is known to be accessible, the known estimates for the accessibility rank are generally not the best possible. In their paper, they prove theorems giving estimates for the accessibility rank of weak equivalences in various cases. Their main application is to the standard model structure on simplicial sets. They show that the class of weak equivalences in this model structure is finitely accessible.

The purpose of this paper is the same, as well as the main example. Namely, we prove theorems giving estimates for the accessibility rank of weak equivalences in various cases, and our main example is the category of simplicial sets on which we achieve a similar estimate as \cite{RaRo}.
However, our theorems, as well as the methods of proof, are completely different.
Since our basic tool is based on applying Theorem \ref{t:model_dual0} as explained above, our estimates only concern finite accessibility. We do believe, however, that Theorem \ref{t:model_dual0}, and thus also our results here, can be generalized to an arbitrary cardinal instead of $\omega$. On the other hand, our theorems apply also in cases where the theorems in \cite{RaRo} do not.

We will now state our main results. For this, we first need a definition:
\begin{define}
Let $(\cC,\cW)$ be a relative category. A map $f:A\to B$ in $\cC$ will be called \emph{right proper}, if for every pull back square of the form
\[
\xymatrix{C\ar[d]^j\ar[r] & D\ar[d]^i\\
A\ar[r]^f & B}
\]
such that $i$ is a weak equivalence, the map $j$ is also a weak equivalence.
\end{define}

We can now state our first criterion for the finite accessibility of the category of weak equivalences  (see Theorem \ref{l:admiss2}):

\begin{thm}
Let $(\underline{\mcal{M}},\underline{\mcal{W}},\underline{\mcal{F}},\underline{\mcal{C}})$ be an $\omega$-combinatorial left proper model category. Let $\cM$ denote the full subcategory of $\underline{\cM}$ spanned by the finitely presentable objects.

Suppose we are given a cylinder object in $\cM$, that is, for every object $B$ of $\cM$ we are given a factorization in $\cM$ of the fold map $B\sqcup B\to B$ into a cofibration followed by a weak equivalence:
$$B\sqcup B\xrightarrow{(i_0,i_1)} I\otimes B\xrightarrow{p} B.$$
(Note that we are not assuming any simplicial structure; $I\otimes B$ is just a suggestive notation.)

We make the following further assumptions:
\begin{enumerate}
\item The category $\cM$ has finite limits.
\item Every object in $\cM$ is cofibrant.
\item For every morphism $f:A\to B$ in $\cM$ the map $B\coprod_{A}(I\otimes A)\to B$, induced by the commutative square
$$\xymatrix{A\ar[d]^{f}\ar[r]^{i_0} & I\otimes A\ar[d]^{f\circ p} \\
                 B \ar[r]^{=} & B,}$$
is a right proper map in $(\cM,\cW)$.
\end{enumerate}

Then the full subcategory of the morphism category of $\underline{\cM}$, spanned by the class of weak equivalences, is finitely accessible.
\end{thm}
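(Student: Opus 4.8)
The plan is to exhibit the given model structure on $\underline{\cM}$ as one of those produced by Theorem~\ref{t:model_dual0}, applied to a weak cofibration structure on $\cM$ itself, and then to conclude via the observation recalled in the introduction that for a category obtained this way the weak equivalences form the category $\Ind(\cW)$. In outline: first I would equip $\cM$ with the restricted classes $\cW\cap\cM$ and $\cC\cap\cM$ and check that $(\cM,\cW\cap\cM,\cC\cap\cM)$ is a small weak cofibration category; then I would use hypotheses (1)--(3) together with the ind-admissibility criterion of this section to see that $\cM$ is ind-admissible; then Theorem~\ref{t:model_dual0} produces an $\omega$-combinatorial model structure on $\Ind(\cM)$; finally I would identify that structure, under the equivalence $\Ind(\cM)\simeq\underline{\cM}$, with the original one, and read off the conclusion.

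For the first step, note that $\underline{\cM}$, being $\omega$-combinatorial, has locally finitely presentable underlying category with $\cM$ as its full subcategory of finitely presentable objects; hence $\cM$ is essentially small, is closed in $\underline{\cM}$ under finite colimits, and $\underline{\cM}\simeq\Ind(\cM)$. The classes $\cW\cap\cM$ and $\cC\cap\cM$ contain all isomorphisms, $\cW\cap\cM$ has two out of three, and $\cC\cap\cM$, $(\cC\cap\cW)\cap\cM$ are closed under pushout in $\cM$ since pushouts in $\cM$ are computed in $\underline{\cM}$. The one axiom that needs an argument is factorization, and the supplied cylinder provides it by the mapping cylinder construction: for $f\colon A\to B$ in $\cM$, set $M_f:=B\coprod_A(I\otimes A)$, the pushout of $i_0\colon A\to I\otimes A$ along $f$, which lies in $\cM$ as a finite colimit of finitely presentable objects. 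The composite $j\colon A\xrightarrow{i_1}I\otimes A\to M_f$ is a cofibration: using that every object is cofibrant one checks $M_f\cong (B\sqcup A)\coprod_{A\sqcup A}(I\otimes A)$, so that $j$ factors as the cofibration $A\hookrightarrow B\sqcup A$ (a pushout of $\emptyset\to B$) followed by the pushout of the cofibration $(i_0,i_1)$ along $f\sqcup\mathrm{id}_A$. The induced map $q\colon M_f\to B$ (coming from $\mathrm{id}_B$ and $f\circ p$) satisfies $q\circ j=f$, and $q$ is a weak equivalence because the structure map $B\to M_f$ is a section of it and is a pushout of $i_0$, which is an acyclic cofibration --- a cofibration since $(i_0,i_1)$ is and all objects are cofibrant, and a weak equivalence since $p\circ i_0=\mathrm{id}$ with $p$ a weak equivalence; two out of three finishes this. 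Thus $f=q\circ j$ with $j\in\cC\cap\cM$ and $q\in\cW\cap\cM$, and $(\cM,\cW\cap\cM,\cC\cap\cM)$ is a small weak cofibration category.

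For the remaining steps, hypotheses (1)--(3) are exactly the input of the ind-admissibility criterion of this section --- finite limits in $\cM$, all objects cofibrant, a given cylinder, and right properness of the cylinder quotients $B\coprod_A(I\otimes A)\to B$ --- so that criterion gives that $\cM$ is ind-admissible, and Theorem~\ref{t:model_dual0} yields an $\omega$-combinatorial model structure on $\Ind(\cM)$ with weak equivalences $Lw^{\cong}(\cW\cap\cM)$, generating cofibrations $\cC\cap\cM$, and generating acyclic cofibrations $(\cC\cap\cW)\cap\cM$. Transporting this along $\Ind(\cM)\simeq\underline{\cM}$ (an equivalence restricting to the inclusion $\cM\hookrightarrow\underline{\cM}$), I would compare it with the original structure on $\underline{\cM}$: both are cofibrantly generated, and since $\underline{\cM}$ is $\omega$-combinatorial its generating cofibrations may be taken inside $\cC\cap\cM$ and its generating acyclic cofibrations inside $(\cC\cap\cW)\cap\cM$, while conversely the maps of $\cC\cap\cM$ (resp.\ $(\cC\cap\cW)\cap\cM$) are cofibrations (resp.\ acyclic cofibrations) of $\underline{\cM}$; so the two structures have the same cofibrations and the same acyclic cofibrations, hence --- a model structure being determined by these two classes --- they coincide. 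In particular the weak equivalences of $\underline{\cM}$ are $Lw^{\cong}(\cW\cap\cM)$, which, as recalled in the introduction, is the essential image of the fully faithful functor $\Ind(\cW\cap\cM)\to\Ind(\cM^{\to})\simeq\Ind(\cM)^{\to}\simeq\underline{\cM}^{\to}$; thus the full subcategory of $\underline{\cM}^{\to}$ on the weak equivalences is equivalent to $\Ind(\cW\cap\cM)$, hence finitely accessible, $\cW\cap\cM$ being essentially small.

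I expect the main obstacle to be the ind-admissibility step, that is, verifying that $Lw^{\cong}(\cW)$ satisfies two out of three in $\Ind(\cM)$: this is where hypothesis (3) and the left properness of $\underline{\cM}$ are genuinely used, and it rests on the technical machinery of this section rather than on anything formal. By contrast, the factorization argument is a routine mapping-cylinder computation, and the identification of the two model structures is the standard fact that a model structure is pinned down by its classes of cofibrations and of acyclic cofibrations.
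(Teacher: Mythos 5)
Your mapping-cylinder computation and the final accessibility step agree with the paper, but there is a genuine gap in the middle: the assertion that hypotheses (1)--(3) feed into ``the ind-admissibility criterion of this section'' and yield ind-admissibility of $(\cM,\cW,\cC)$ outright. The only criterion in the paper that gives the full two out of three property for $Lw^{\cong}(\cW)$ is Corollary \ref{r:proper}, and it requires all three proper factorizations, including $\Mor(\cM)=RP\circ \cW$. The mapping cylinder factorization $f=q\circ i$ provides $\Mor(\cM)=\cW\circ\cC$, $\Mor(\cM)=\cW\circ LP$ and $\Mor(\cM)=RP\circ LP$ (since $i$ is a cofibration, hence left proper by left properness of $\underline{\cM}$, and $q$ is a weak equivalence and right proper by hypothesis (3)), but \emph{not} $\Mor(\cM)=RP\circ\cW$, because $i$ is in general not a weak equivalence. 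Consequently Proposition \ref{p:compose} only yields the two cancellation/composition statements of Definition \ref{d:almost_admiss_dual}, i.e.\ \emph{almost} ind-admissibility; the remaining third of two out of three (if $g$ and $g\circ f$ are in $Lw^{\cong}(\cW)$ then so is $f$) does not follow from any criterion you can invoke at that point. This is precisely why the paper introduces almost model categories.

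The paper's actual route (Proposition \ref{l:admiss}) inverts your order of steps: from almost ind-admissibility it applies Theorem \ref{t:almost_model_dual} to get an \emph{almost} model structure on $\Ind(\cM)\simeq\underline{\cM}$, identifies its fibrations and acyclic fibrations with $\underline{\cF}$ and $\underline{\cF}\cap\underline{\cW}$ using $\omega$-combinatoriality (the generating sets lie in $\cM$), deduces equality of the (acyclic) cofibration classes via Lemma \ref{l:lifting}, and then proves $Lw^{\cong}(\cW)=\underline{\cW}$ by factoring a map of $Lw^{\cong}(\cW)$ into an acyclic cofibration followed by a fibration and using the almost-admissibility cancellation. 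Only after this identification does two out of three for $Lw^{\cong}(\cW)$ --- i.e.\ genuine ind-admissibility --- become available, so that Theorem \ref{t:model_dual} and Lemma \ref{l:finite access} apply. Your plan, which first claims ind-admissibility and only afterwards compares the two model structures, cannot be carried out as written; it needs to be reorganized along these lines (or you must supply an independent proof of the missing cancellation property), after which the rest of your argument goes through.
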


Our second criterion can be shown using the first one (see Theorem \ref{l:admiss3}):

\begin{thm}
Let $(\underline{\mcal{M}},\underline{\mcal{W}},\underline{\mcal{F}},\underline{\mcal{C}})$ be an $\omega$-combinatorial left proper model category. Let $\cM$ denote the full subcategory of $\underline{\cM}$ spanned by the finitely presentable objects. Assume that the category $\cM$ has finite limits and let $*$ denote the terminal object in $\cM$.

Suppose we are given a factorization in $\cM$ of the fold map $*\sqcup *\to *$ into a cofibration followed by a weak equivalence:
$$*\sqcup *\xrightarrow{} I\xrightarrow{} *.$$

We make the following further assumptions:
\begin{enumerate}
\item For every morphism $Y\to B$ in $\cM$, the functor
$$Y\times_B(-):\cM_{/B}\to\cM$$
commutes with finite colimits.
\item Every object in $\cM$ is cofibrant.
\item For every object $B$ in $\cM$, the functor
$$B\times(-):\cM\to\cM$$ preserves cofibrations and weak equivalences. \end{enumerate}

Then the full subcategory of the morphism category of $\underline{\cM}$, spanned by the class of weak equivalences, is finitely accessible.
\end{thm}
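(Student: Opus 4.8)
The plan is to deduce this statement from the first criterion, Theorem~\ref{l:admiss2}, by manufacturing a cylinder object in $\cM$ out of the interval object $I$ and checking the remaining hypotheses of that theorem. All products below are taken in $\cM$ (they exist since $\cM$ has finite limits), and I set $I\otimes B:=B\times I$. Since $*$ is terminal in $\cM$ we have $\cM_{/*}\simeq\cM$ and $Y\times_*(-)=Y\times(-)$, so assumption~(1) applied with base $*$ says that $B\times(-)\colon\cM\to\cM$ preserves finite colimits; in particular $B\times(*\sqcup*)\cong B\sqcup B$ and $B\times*\cong B$. Applying the functor $B\times(-)$ to the given factorization $*\sqcup*\to I\to*$ of the fold map therefore produces a factorization $B\sqcup B\xrightarrow{(i_0,i_1)}I\otimes B\xrightarrow{p}B$ of the fold map of $B$, whose first arrow is a cofibration and whose second arrow is a weak equivalence by assumption~(3). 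This is a cylinder object in the sense of Theorem~\ref{l:admiss2}. Its hypotheses~(1) and~(2) are literally among our assumptions, so everything reduces to checking hypothesis~(3): for every $f\colon A\to B$ in $\cM$, the map $q\colon M_f:=B\coprod_A(I\otimes A)\to B$ is right proper in $(\cM,\cW)$. (Here $M_f$ is the mapping cylinder of $f$; because a finite colimit of finitely presentable objects is finitely presentable \cite{AR}, the pushout defining $M_f$, and all finite colimits occurring below, agree with the corresponding colimits in $\underline{\cM}$.)

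I isolate two facts. First, $q$ is a weak equivalence: the map $i_0\colon A\to I\otimes A$ is obtained by applying $A\times(-)$ to a structure map $*\to I$, and hence factors as $A\hookrightarrow A\sqcup A\to A\times I$, a composite of cofibrations (the first because $A$ is cofibrant by~(2), the second by~(3)); since $p\circ i_0=\mathrm{id}_A$ with $p$ a weak equivalence, $i_0$ is an acyclic cofibration, so its pushout $B\to M_f$ is an acyclic cofibration, and $q$ is a weak equivalence by two out of three. Second, and this is the heart of the matter, base change along a weak equivalence carries a mapping cylinder to a mapping cylinder. Let $i\colon D\to B$ be a weak equivalence in $\cM$, set $A':=A\times_B D$ with projections $f'\colon A'\to D$ and $g'\colon A'\to A$, and regard $M_f$ as the pushout in $\cM_{/B}$ of the span $(B,\mathrm{id})\leftarrow(A,f)\to(I\otimes A,\,f\circ p)$. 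Applying the functor $D\times_B(-)\colon\cM_{/B}\to\cM$, which preserves finite colimits by assumption~(1), gives
\[
D\times_B M_f\;\cong\;D\coprod_{A'}\bigl(D\times_B(I\otimes A)\bigr)\;\cong\;D\coprod_{A'}(I\otimes A')\;=\;M_{f'},
\]
the second isomorphism following from the fact that $I\otimes A\to B$ factors through $p$ together with the elementary identity $(A\times I)\times_A A'\cong A'\times I$ in $\cM$. Under this identification the two projections of $C:=M_f\times_B D\cong M_{f'}$ become the map $j\colon C\to M_f$ and the collapse map $M_{f'}\to D$.

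Now take a pullback square in $\cM$
\[
\xymatrix{C\ar[d]^{j}\ar[r] & D\ar[d]^{i}\\ M_f\ar[r]^{q} & B}
\]
with $i$ a weak equivalence. By the second fact $C\cong M_{f'}$ and the left-hand composite $C\to D$ is the collapse map of $M_{f'}$, which is a weak equivalence by the first fact applied to $f'$. Since the square commutes and both $i$ and $q$ are weak equivalences, two out of three shows $j$ is a weak equivalence. Hence $q$ is right proper, hypothesis~(3) of Theorem~\ref{l:admiss2} is verified, and that theorem yields the conclusion. The one delicate step is the second fact: the pullback of the weak equivalence $q$ along $i$ is not governed by left properness of $\underline{\cM}$, and it is precisely assumption~(1) — that base-change functors commute with finite colimits — that lets us recognize this pullback as the mapping cylinder $M_{f'}$ and finish by a two-out-of-three argument.
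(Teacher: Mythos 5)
Your proposal is correct and follows essentially the same route as the paper: reduce to Theorem~\ref{l:admiss2} by setting $I\otimes B:=I\times B$ (a cylinder object by assumptions on products), and verify its hypothesis~(3) by using assumption~(1) to identify the pullback $C(f)\times_B D$ with the mapping cylinder of the pulled-back map $A\times_B D\to D$, whose collapse map is a weak equivalence, and then conclude by two out of three. The only differences are cosmetic (you make explicit the distributivity over $\sqcup$ via assumption~(1) with base $*$ and re-derive within your argument the fact, already contained in the proof of Theorem~\ref{l:admiss2}, that the cylinder collapse is a weak equivalence).
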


It is not hard to verify that the standard model structure on the category of simplicial sets satisfies the hypothesis of the previous theorem (see Theorem \ref{l:S_f_admiss}). Thus we obtain:
\begin{thm}\label{l:S_f_admiss0}
The full subcategory of the morphism category of $\cS$, spanned by the class of weak equivalences, is finitely accessible.
\end{thm}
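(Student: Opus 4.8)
The plan is to verify that the standard (Kan--Quillen) model structure on $\cS$ satisfies all the hypotheses of the second criterion, Theorem~\ref{l:admiss3}, and then invoke that theorem directly. First, recall that $\cS$ is locally finitely presentable, that the Kan--Quillen model structure is cofibrantly generated with generating cofibrations $\{\partial\Delta^n\hookrightarrow\Delta^n\}$ and generating trivial cofibrations $\{\Lambda^n_k\hookrightarrow\Delta^n\}$, all of which have finitely presentable (indeed finite) domains and codomains, and that every simplicial set is cofibrant; hence the model structure is $\omega$-combinatorial and left proper. The full subcategory $\cM$ of finitely presentable objects is precisely the category of \emph{finite} simplicial sets, i.e.\ those with only finitely many nondegenerate simplices.

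Next I would supply the remaining data and check the unnumbered hypotheses. The category $\cM$ has finite limits: the terminal object is $\Delta^0$, a fibre product $X\times_B Y$ of finite simplicial sets is a subobject of the finite simplicial set $X\times Y$ and hence finite, and equalizers are handled the same way. For the factorization of the fold map $*\sqcup *\to *$ into a cofibration followed by a weak equivalence we take $I=\Delta^1$, using $\partial\Delta^1 = \Delta^0\sqcup\Delta^0\hookrightarrow\Delta^1\to\Delta^0$: the first map is a monomorphism, hence a cofibration, and the second is a weak equivalence.

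It remains to verify the three numbered conditions. For (1): $\cS$ is a presheaf topos, hence locally cartesian closed, so for any map $Y\to B$ in $\cM$ the base-change functor $Y\times_B(-)$ is a left adjoint and therefore preserves all colimits; since finite colimits of finite simplicial sets are finite, and, as noted above, base change of a finite simplicial set along a map of finite simplicial sets is finite, this restricts to a functor $\cM_{/B}\to\cM$ commuting with finite colimits. Condition (2) is the standard fact that every simplicial set is cofibrant. For (3): the functor $B\times(-)$ preserves monomorphisms, hence cofibrations; and since the Kan--Quillen model structure is a cartesian model category and $B$ is cofibrant, $B\times(-)$ is a left Quillen functor, so by Ken Brown's lemma it preserves weak equivalences between cofibrant objects --- and all objects are cofibrant.

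The only mildly delicate points are the bookkeeping that each of these constructions stays within the finitely presentable objects, and recalling the input that $\cS$ is a cartesian model category (equivalently, that the product of a weak equivalence with any simplicial set is a weak equivalence), which is the one place the specific homotopy theory of $\cS$ enters; everything else is formal topos-theoretic closure. With all hypotheses of Theorem~\ref{l:admiss3} verified, we conclude that the full subcategory of the morphism category of $\cS$ spanned by the weak equivalences is finitely accessible, i.e.\ equivalent to $\Ind(\cD)$ for some small category $\cD$.
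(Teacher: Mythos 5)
Your proposal is correct and follows essentially the same route as the paper: both verify the hypotheses of Theorem~\ref{l:admiss3} for $\cS$ with $\cM$ the finite simplicial sets, taking $I=\Delta^1$ via $\partial\Delta^1\hookrightarrow\Delta^1\to\Delta^0$, and using that all objects are cofibrant and that $B\times(-)$ is left Quillen to get condition~3. The only differences are cosmetic (you justify closure of $\cS_f$ under finite limits via subobjects of finite products rather than the paper's colimit-of-simplices argument, and you cite cartesianness rather than the simplicial structure), so no further changes are needed.
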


As mentioned above, Theorem \ref{l:S_f_admiss0} was also proved in \cite{RaRo}, using different methods. In the appendix we prove some results that might shade some light as to possible connections between the approach taken in this paper, and that of Raptis and Rosick\'y. To prove these results we will need to present the more detailed version of Theorem \ref{t:model_dual0} (see Theorem \ref{t:model_elaborate}).

\subsection{Organization of the paper}
In Section \ref{s:prelim} we give a short review of the necessary background on pro-categories and model structures on them. Everything in this section dualizes easily to ind-categories.  In Section \ref{s:proper} we prove a theorem giving sufficient intrinsic conditions for the pro-admissibility of a weak fibration category. We also define an auxiliary notion that generalizes the notion of a model category. The results and definitions of Section \ref{s:proper} will be used in Section \ref{s:app}, where we prove the main results of this paper, namely, a series of criteria for the finite accessibility of the category of weak equivalences.

\subsection{Acknowledgments}
We would like to thank Yonatan Harpaz for useful conversations. We also thank Dmitri Pavlov for pointing out the relation of our work to the work of Raptis and  Rosick\'y \cite{RaRo}, and Geoffroy Horel for Remark \ref{r:sharp}. Finally, we would like to thank the referee for his useful comments.

\section{Preliminaries: model structures on pro-categories}\label{s:prelim}
In this section we review the necessary background of model
structures on pro-categories. We state the results without proof, for later
reference. For proofs and more information the reader is referred to \cite{AM}, \cite{Isa}, \cite{BaSc} and \cite{BaSc1}. All these results are easily dualized to the case of ind-categories.

\subsection{Pro-categories}
\begin{define}\label{d:cofiltered}
A category $I$ is called \emph{cofiltered} if the following conditions are satisfied:
\begin{enumerate}
\item The category $I$ is non-empty.
\item For every pair of objects $s,t \in I$, there exists an object $u\in I$, together with
morphisms $u\to s$ and $u\to t$.
\item For every pair of morphisms $f,g:s\to t$ in $I$, there exists a morphism $h:u\to s$ in $I$ such that $f\circ h=g\circ h$.
\end{enumerate}
\end{define}

A category is called \emph{small} if it has only a set of objects and a set of morphisms.

\begin{define}
Let $\mcal{C}$ be a category. The category $\Pro(\mcal{C})$ has as objects all diagrams in $\cC$ of the form $I\to \cC$ such that $I$ is small and cofiltered (see Definition \ref{d:cofiltered}). The morphisms are defined by the formula
$$\Hom_{\Pro(\mcal{C})}(X,Y):=\lim\limits_s \colim\limits_t \Hom_{\mcal{C}}(X_t,Y_s).$$
Composition of morphisms is defined in the obvious way.
\end{define}

Thus, if $X:I\to \mcal{C}$ and $Y:J\to \mcal{C}$ are objects in $\Pro(\mcal{C})$, providing a morphism $X\to Y$ means specifying for every $s$ in $J$ an object $t$ in $I$ and a morphism $X_t\to Y_s$ in $\mcal{C}$. These morphisms should of course satisfy some compatibility condition. In particular, if the indexing categories are equal, $I=J$, any natural transformation $X\to Y$ gives rise to a morphism $X\to Y$ in $\Pro(C)$. More generally, if $p:J\to I$ is a functor, and $\phi:p^*X:=X\circ p\to Y$ is a natural transformation, then the pair $(p,\phi)$ determines a morphism $\nu_{p,\phi}:X\to Y$ in $\Pro(C)$ (for every $s$ in $J$ we take the morphism $\phi_s:X_{p(s)}\to Y_s$). In particular, taking $Y=p^*X$ and $\phi$ to be the identity natural transformation, we see that $p$ determines a morphism $\nu_{p,X}:X\to p^*X$ in $\Pro(C)$.

The word pro-object refers to objects of pro-categories. A \emph{simple} pro-object
is one indexed by the category with one object and one (identity) map. Note that for any category $\mcal{C}$, $\Pro(\mcal{C})$ contains $\mcal{C}$ as the full subcategory spanned by the simple objects.

\begin{define}\label{d:cofinal}
Let $p:J\to I$ be a functor between small categories. The functor $p$ is said to be \emph{(left) cofinal} if for every $i$ in $I$ the over category ${p}_{/i}$ is nonempty and connected.
\end{define}

Cofinal functors play an important role in the theory of pro-categories mainly because of the following well-known lemma (see for example \cite{AM}):

\begin{lem}\label{l:cofinal}
Let $p:J\to I$ be a cofinal functor between small cofiltered categories, and let $X:I\to \cC$ be an object in $\Pro(\cC)$. Then the morphism in $\Pro(\cC)$ that $p$ induces, $\nu_{p,X}:X\to p^*X$, is an isomorphism.
\end{lem}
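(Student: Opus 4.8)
The plan is to produce an explicit two-sided inverse to $\nu_{p,X}$ in $\Pro(\cC)$, read off directly from the cofinality hypothesis and the defining formula $\Hom_{\Pro(\cC)}(Y,Z)=\lim_s\colim_t\Hom_{\cC}(Y_t,Z_s)$. Write $p^*X=X\circ p\colon J\to\cC$. To give a morphism $\psi\colon p^*X\to X$ in $\Pro(\cC)$ is to give, compatibly in $i\in I$, an element of $\colim_{j\in J}\Hom_{\cC}(X_{p(j)},X_i)$. Since $p$ is cofinal the over-category $p_{/i}$ is nonempty, so I would pick an object $(j,f\colon p(j)\to i)$ of it and declare the $i$-component of $\psi$ to be the class of $X(f)\colon X_{p(j)}\to X_i$.

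First I would check that this class is independent of the chosen object of $p_{/i}$; this is exactly where connectedness of $p_{/i}$ is used. A morphism $(j,f)\to(j',f')$ in $p_{/i}$ is a morphism $h\colon j\to j'$ of $J$ with $f'\circ p(h)=f$, whence $X(f)=X(f')\circ X(p(h))$, so $X(f)$ and $X(f')$ determine the same element of $\colim_{j}\Hom_{\cC}(X_{p(j)},X_i)$; since $p_{/i}$ is connected, any two of its objects are joined by a finite zigzag, and walking along it shows the class is canonical. Next I would check that the components are compatible along a morphism $g\colon i\to i'$ of $I$: if $(j,f)\in p_{/i}$ then $(j,g\circ f)\in p_{/i'}$, and $X(g\circ f)=X(g)\circ X(f)$ gives exactly the required compatibility, so $\psi$ is a well-defined morphism of $\Pro(\cC)$.

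It then remains to unwind composition in $\Pro(\cC)$ and verify $\psi\circ\nu_{p,X}=\mathrm{id}_X$ and $\nu_{p,X}\circ\psi=\mathrm{id}_{p^*X}$. Recall that $\nu_{p,X}$, being induced by $(p,\mathrm{id})$, has $j$-component (for $j\in J$) equal to $\mathrm{id}_{X_{p(j)}}$ sitting over the index $p(j)\in I$. Hence for $i\in I$ the $i$-component of $\psi\circ\nu_{p,X}$ is represented, over the index $p(j)\in I$, by $X(f)\colon X_{p(j)}\to X_i$ for a chosen $(j,f)\in p_{/i}$; but the transition map along $f\colon p(j)\to i$ carries $\mathrm{id}_{X_i}$ to $X(f)$, so this is the same element of $\colim_{t\in I}\Hom_{\cC}(X_t,X_i)$ as the $i$-component $\mathrm{id}_{X_i}$ of $\mathrm{id}_X$. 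Symmetrically, when evaluating $\psi$ at the index $p(j)\in I$ inside the composite $\nu_{p,X}\circ\psi$ I may use the representative coming from $(j,\mathrm{id}_{p(j)})\in p_{/p(j)}$, and the composite collapses to the identity, giving $\nu_{p,X}\circ\psi=\mathrm{id}_{p^*X}$.

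The only genuine difficulty is notational bookkeeping: keeping straight the double $\lim$--$\colim$ indexing in the Hom-formula and the directions of its transition maps; all the actual content sits in the well-definedness step, i.e.\ in the fact that connectedness of the categories $p_{/i}$ forces the class $[X(f)]$ to be independent of all choices. If one prefers to avoid writing out the inverse explicitly, an equivalent route is to use the full embedding $\Pro(\cC)\hookrightarrow\Fun(\cC,\Set)^{\mathrm{op}}$ sending an object $Z\colon K\to\cC$ to the functor $c\mapsto\colim_{k}\Hom_{\cC}(Z_k,c)$: under it $\nu_{p,X}$ corresponds to the canonical comparison $\colim_{j}\Hom_{\cC}(X_{p(j)},c)\to\colim_{i}\Hom_{\cC}(X_i,c)$, which is bijective for every $c$ precisely because $p$ being cofinal makes $p^{\mathrm{op}}\colon J^{\mathrm{op}}\to I^{\mathrm{op}}$ a final functor and hence an isomorphism on all colimits; fullness and faithfulness of the embedding then show that $\nu_{p,X}$ itself is an isomorphism.
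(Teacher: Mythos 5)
Your argument is correct. Note that the paper does not prove this lemma at all: it is stated as ``well-known'' with a reference to Artin--Mazur, so there is no in-paper proof to compare against. Your explicit construction of the inverse $\psi$ is exactly the standard argument: nonemptiness of $p_{/i}$ gives existence of a representative $X(f)$, connectedness gives independence of the choice (since a single morphism of $p_{/i}$ identifies the two representatives under a transition map of the colimit $\colim_{j\in J^{\mathrm{op}}}\Hom_{\cC}(X_{p(j)},X_i)$, and a zigzag propagates this), and your computation of both composites correctly uses the fact that the transition map along $f\colon p(j)\to i$ sends $\mathrm{id}_{X_i}$ to $X(f)$, respectively that one may choose the representative $(j,\mathrm{id}_{p(j)})$ of the $p(j)$-component of $\psi$. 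The alternative route you sketch, via the fully faithful embedding $X\mapsto\colim_i\Hom_{\cC}(X_i,-)$ and the observation that cofinality of $p$ is precisely finality of $p^{\mathrm{op}}$, hence induces bijections on the relevant colimits, is equally valid and is the cleaner way to package the same bookkeeping; either version would serve as a complete proof of the lemma.
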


The following lemma can be found in \cite[Appendix 3.2]{AM}. See also \cite[Corollary 3.26]{BaSc} for a stronger result.
\begin{lem}\label{every map natural}
Every morphism in $\Pro(\cC)$ is isomorphic, in the category of morphisms in $\Pro(\cC)$, to a morphism that comes from a natural transformation (that is, to a morphism of the form $\nu_{id,\phi}$, where $\phi$ is a natural transformation).
\end{lem}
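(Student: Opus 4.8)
The plan is to exhibit the given morphism $f\colon X\to Y$ as isomorphic, in the arrow category $\Pro(\cC)^{\to}$, to the bottom edge of a commuting square whose top edge is $f$ and whose two vertical edges are isomorphisms of the form $\nu_{p,X}$ and $\nu_{q,Y}$ for suitable cofinal functors $p,q$ between small cofiltered categories. By Lemma \ref{l:cofinal} the vertical edges are then isomorphisms, and the bottom edge will by construction be of the form $\nu_{id,\theta}$, which is exactly what is claimed. Write $X\colon I\to\cC$ and $Y\colon J\to\cC$, so that $f$ is a compatible family $(f_s)_{s\in J}$ with $f_s\in\colim_t\Hom_\cC(X_t,Y_s)$.

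First I would build the new indexing category $K$: an object is a triple $(t,s,g)$ with $t\in I$, $s\in J$, and $g\colon X_t\to Y_s$ a morphism of $\cC$ whose class in $\colim_{t'}\Hom_\cC(X_{t'},Y_s)$ equals $f_s$; a morphism $(t,s,g)\to(t',s',g')$ is a pair $(\iota\colon t\to t',\ \kappa\colon s\to s')$ of morphisms in $I$ and $J$ with $Y_\kappa\circ g=g'\circ X_\iota$. There are projections $p\colon K\to I$, $(t,s,g)\mapsto t$, and $q\colon K\to J$, $(t,s,g)\mapsto s$, and the assignment $(t,s,g)\mapsto g$ is, tautologically, a natural transformation $\theta\colon p^*X\to q^*Y$, since its naturality squares are precisely the morphisms of $K$. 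One then checks that the square
\[
\xymatrix{
X \ar[r]^{f} \ar[d]_{\nu_{p,X}} & Y \ar[d]^{\nu_{q,Y}} \\
p^*X \ar[r]^{\nu_{id,\theta}} & q^*Y
}
\]
commutes: over each $k=(t,s,g)\in K$ the component of the lower-left composite is the class of $g$ (because $\nu_{p,X}$ is level-wise an identity and $\theta_k=g$), while the component of the upper-right composite is $f_s$ itself (because $\nu_{q,Y}$ is level-wise an identity), and $g$ represents $f_s$ by the very definition of $K$; so the two morphisms $X\to q^*Y$ agree level-wise, hence agree. This square is then an isomorphism in $\Pro(\cC)^{\to}$ from $f$ to $\nu_{id,\theta}$.

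The main obstacle is the remaining verification that $K$ is small and cofiltered and that $p$ and $q$ are cofinal, so that Lemma \ref{l:cofinal} applies. Smallness and non-emptiness are immediate (for non-emptiness pick any $s_0\in J$ and any representative of $f_{s_0}$). The rest is the familiar filtered diagram chase: given finitely many objects and parallel morphisms of $K$, I would first use conditions (2) and (3) of Definition \ref{d:cofiltered} in $J$ to bring their $J$-coordinates to a common index $s_0$, choose a representative $g_0\colon X_{t_0}\to Y_{s_0}$ of $f_{s_0}$, and then use that two elements of the filtered colimit $\colim_t\Hom_\cC(X_t,Y_{s_0})$ become equal after restriction to a common smaller index, together with conditions (2) and (3) in $I$, to replace $t_0$ by an index $t^*\to t_0$ over which all the relevant squares commute on the nose; this yields the required cone (resp. coequalizing morphism) in $K$ mapping compatibly to the given data, which gives cofilteredness of $K$, and for cofinality of $p$ (resp. $q$) one uses one further application of (3) to force the comparison maps into the fixed object of $I$ (resp. $J$) to agree. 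None of this is deep, but it is the step that demands genuine bookkeeping.

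Finally I would remark that this is in essence the argument of \cite[Appendix]{AM} (and a weak form of \cite[Corollary 3.26]{BaSc}), and that, like everything in this section, it dualizes verbatim to $\Ind(\cC)$.
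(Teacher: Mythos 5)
Your argument is correct: the comma-type category $K$ of triples $(t,s,g)$ with $[g]=f_s$, the tautological transformation $\theta\colon p^*X\to q^*Y$, and the commuting square against $\nu_{p,X}$, $\nu_{q,Y}$ (isomorphisms by Lemma \ref{l:cofinal} once $K$ is shown cofiltered with $p,q$ cofinal) is exactly the classical level-representation argument of \cite[Appendix 3.2]{AM}, which is what the paper itself invokes by citation rather than proving. The remaining cofiltered/cofinality verifications you defer are indeed the standard diagram chase, and your outline of them (equalize in $J$, match representatives in the filtered colimit $\colim_t\Hom_\cC(X_t,Y_s)$, equalize in $I$, and restrict objects of $K$ along maps of $I$ or $J$ to handle connectedness of the over-categories) identifies the right steps.
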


\begin{define}\label{def levelwise}
Let $\mcal{C}$ be a category, $M \subseteq \Mor(\mcal{C})$ a class of morphisms in $\mcal{C}$, $I$ a small category, and $F:X\to Y$ a morphism in $\mcal{C}^I$. Then $F$ will be called a \emph{level-wise $M$-map}, if for every $i\in I$ the morphism $X_i\to Y_i$ is in $M$. We will denote this by $F\in Lw(M)$.
\end{define}

\begin{define}\label{def ess levelwise}
Let $\mcal{C}$ be a category, and $M \subseteq \Mor(\mcal{C})$ a class of morphisms in $\mcal{C}$. Denote by:
\begin{enumerate}
\item ${}^{\perp}M$ the class of morphisms in $\mcal{C}$ having the left lifting property w.r.t. any morphism in $M$.
\item $M^{\perp}$ the class of morphisms in $\mcal{C}$ having the right lifting property w.r.t. any morphism in $M$.
\item $Lw^{\cong}(M)$ the class of morphisms in $\Pro(\mcal{C})$ that are \textbf{isomorphic} to a morphism that comes from a natural transformation which is a level-wise $M$-map.
\end{enumerate}
\end{define}

Everything we have done so far (and throughout this paper) is completely dualizable. Thus we can define:
\begin{define}\label{d:filtered}
A category $I$ is called \emph{filtered} if the following conditions are satisfied:
\begin{enumerate}
\item The category $I$ is non-empty.
\item For every pair of objects $s,t \in I$, there exists an object $u\in I$, together with
morphisms $s\to u$ and $t\to u$.
\item For every pair of morphisms $f,g:s\to t$ in $I$, there exists a morphism $h:t\to u$ in $I$ such that $h\circ f=h\circ g$.
\end{enumerate}
\end{define}

The dual to the notion of a pro-category is the notion of an ind-category:

\begin{define}
Let $\mcal{C}$ be a category. The category $\Ind(\mcal{C})$ has as objects all diagrams in $\cC$ of the form $I\to \cC$ such that $I$ is small and filtered (see Definition \ref{d:filtered}). The morphisms are defined by the formula
$$\Hom_{\Pro(\mcal{C})}(X,Y):=\lim\limits_s \colim\limits_t \Hom_{\mcal{C}}(X_s,Y_t).$$
Composition of morphisms is defined in the obvious way.
\end{define}

Clearly for every category $\cC$ we have a natural isomorphism of categories: $\Ind(\cC)^{op}\cong \Pro(\cC^{op})$.

We are not going to write the dual to every definition or theorem explicitly, only in certain cases.

\subsection{From a weak fibration category to a model category}

We now present the definition of a weak fibration category, after two preliminary definitions:

\begin{define}
Let $\cC$ be a category, and let $M,N$ be classes of morphisms in $\cC$. We will denote by $\Mor({\cC}) = {M}\circ {N}$ the assertion that every map $A\to B $ in ${\cC}$ can be factored as $A\xrightarrow{f} C\xrightarrow{g} B $,
where $f$ is in ${N}$ and $g$ is in ${M}$.
\end{define}

\begin{define}\label{d:PB}
Let ${\cC}$ be a category with finite limits, and let ${\cM}\subseteq{\cC}$ be a subcategory. We say
that ${\cM}$ is \emph{closed under base change} if whenever we have a pullback square
\[
\xymatrix{A\ar[d]^g\ar[r] & B\ar[d]^f\\
C\ar[r] & D}
\]
such that $f$ is in ${\cM}$, then $g$ is in ${\cM}$.
\end{define}

\begin{define}\label{d:weak_fib}
A \emph{weak fibration category} is a category ${\cC}$ with an additional
structure of two subcategories:
$${\cF}, {\cW} \subseteq {\cC}$$
that contain all the isomorphisms, such that the following conditions are satisfied:
\begin{enumerate}
\item ${\cC}$ has all finite limits.
\item ${\cW}$ has the two out of three property.
\item The subcategories ${\cF}$ and ${\cF}\cap {\cW}$ are closed under base change.
\item $\Mor({\cC}) = {\cF}\circ {\cW}$.
\end{enumerate}
The maps in ${\cF}$ are called \emph{fibrations}, and the maps in ${\cW}$ are called \emph{weak equivalences}.
\end{define}

\begin{rem}
The notion of a weak fibration category is closely related other notions considered previously in the literature such as a ``category of fibrant objects" (\cite{Bro}), a ``fibration category" (\cite{Bau}), an ``Anderson-Brown-Cisinski fibration category" (\cite{Rad}) and more. These notions were introduced as a more flexible structure than a model category in which to do abstract homotopy theory.
\end{rem}

\begin{define}\label{d:rel}
A relative category is a pair $(\cC,\cW)$, consisting of a category
$\cC$, and a subcategory $\cW\subseteq \cC$ that contains all the
isomorphisms and satisfies the two out of three property; $\cW$ is called the subcategory of \emph{weak equivalences}.
\end{define}

\begin{rem}
Any weak fibration category is naturally a relative category when ignoring the fibrations.
\end{rem}

\begin{define}
We will denote by $\to$ the category consisting of two objects and one non-identity morphism between them. Thus, if $\cC$ is any category, the functor category $\cC^\to$ is just the category of morphisms in $\cC$.
\end{define}

\begin{define}\label{d:admiss}
A relative category $(\cC,\cW)$ will be called:
\begin{enumerate}
\item pro-admissible, if $Lw^{\cong}(\cW)\subseteq  \Pro(\cC)^\to$ satisfies the two out of three property,
\item ind-admissible, if $Lw^{\cong}(\cW)\subseteq  \Ind(\cC)^\to$ satisfies the two out of three property,
\item admissible, if it both pro- and ind-admissible.
\end{enumerate}
\end{define}

The following theorem is almost a special case of \cite[Theorem 4.8]{BaSc1}:
\begin{thm}\label{t:model}
Let $(\cC,\cW,\cF)$ be a small pro-admissible weak fibration category.
Then there exists a model category structure on $\Pro(\cC)$ such that:
\begin{enumerate}
\item The weak equivalences are $\mathbf{W} := Lw^{\cong}(\cW)$.
\item The cofibrations are $\mathbf{C} :=  {}^{\perp} (\cF\cap \cW)$.
\item The fibrations are maps satisfying the right lifting property with respect to all acyclic cofibrations.
\end{enumerate}
Moreover, this model category is $\omega$-cocombinatorial, with $\mcal{F}$ as the set of generating fibrations and $\mcal{F}\cap \mcal{W}$ as the set of generating acyclic fibrations.
\end{thm}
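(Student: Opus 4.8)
The plan is to read the statement off from \cite[Theorem 4.8]{BaSc1}. That theorem produces, from any \emph{homotopically small} pro-admissible weak fibration category $(\cC,\cW,\cF)$, a model structure on $\Pro(\cC)$ whose weak equivalences are $Lw^{\cong}(\cW)$ and whose cofibrations are ${}^{\perp}(\cF\cap\cW)$, the fibrations being pinned down by an explicit (co)lifting condition. So the first step is simply to check that the hypotheses match: a \emph{small} weak fibration category is homotopically small (this is the remark recorded in the introduction), and pro-admissibility is assumed outright; hence \cite[Theorem 4.8]{BaSc1} applies and yields clauses (1) and (2). Clause (3) needs nothing further, since in any model category the fibrations are exactly the maps with the right lifting property against every acyclic cofibration.

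It remains to establish the ``moreover'' clause, i.e.\ that the model category is $\omega$-cocombinatorial with $\mcal{F}$ as the set of generating fibrations and $\mcal{F}\cap\mcal{W}$ as the set of generating acyclic fibrations. The two ingredients are (a) that $\Pro(\cC)^{op}$ is locally finitely presentable, and (b) that the domains and codomains of the maps in $\cF$ and in $\cF\cap\cW$ are finitely copresentable in $\Pro(\cC)$. For (a): since a weak fibration category has all finite limits, $\cC^{op}$ is a small category with finite colimits, so $\Ind(\cC^{op})$ is locally finitely presentable; as $\Pro(\cC)\cong\Ind(\cC^{op})^{op}$, this says precisely that $\Pro(\cC)^{op}$ is locally finitely presentable (in particular $\Pro(\cC)$ is bicomplete, as a model category must be). For (b): I would invoke the standard fact that the objects of $\cC$, regarded as simple pro-objects, are $\omega$-cocompact in $\Pro(\cC)$; concretely, for a simple pro-object $c$ and a cofiltered diagram $X\colon I\to\cC$ one has $\Hom_{\Pro(\cC)}(X,c)\cong\colim_{i}\Hom_{\cC}(X_i,c)$ while $X\cong\lim_{i\in I}X_i$ in $\Pro(\cC)$, which is the dual of the corresponding well-known statement for ind-categories. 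Since every map in $\cF$ and in $\cF\cap\cW$ is a map between objects of $\cC$, its domain and codomain are therefore finitely copresentable, and together with (a) and the fact (implicit in \cite[Theorem 4.8]{BaSc1}) that the model structure is cofibrantly generated in the dual sense with $\cF$ and $\cF\cap\cW$ as the generating fibrations and generating acyclic fibrations, this is exactly the definition of $\omega$-cocombinatorial.

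From the present vantage point there is essentially no obstacle: all the genuine work --- constructing the factorizations by the dual small object argument, verifying the lifting axioms, and, crucially, checking that $Lw^{\cong}(\cW)$ satisfies two-out-of-three and the retract axiom (this is where pro-admissibility and smallness are really used) --- is carried out in \cite{BaSc1}. The only points that deserve care, and the reason the statement is merely ``almost'' a special case of the cited theorem, are that \cite[Theorem 4.8]{BaSc1} describes the fibrations through a more explicit condition rather than as ``maps with the right lifting property against acyclic cofibrations,'' and that the accessibility rank $\omega$ afforded by the smallness of $\cC$ is not recorded there. I will therefore give the more detailed description separately (Theorem \ref{t:model_elaborate}) and content myself here with the clean formulation above.
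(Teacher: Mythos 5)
Your overall route matches the paper's: invoke \cite[Theorem 4.8]{BaSc1} (after observing that a small weak fibration category is homotopically small and that pro-admissibility is assumed), and then get the $\omega$-cocombinatorial claim from the facts, taken from \cite{AR}, that $\Ind(\cC^{op})\cong\Pro(\cC)^{op}$ is locally presentable with every object of $\cC^{op}$ $\omega$-presentable, combined with the identifications $\mathbf{C}={}^{\perp}(\cF\cap\cW)$ and $\mathbf{C}\cap\mathbf{W}={}^{\perp}\cF$ already established in \cite[Theorem 4.8]{BaSc1}. That part of your argument is fine and is essentially the paper's.

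The gap is the factorization axiom. The definition of model category used in Theorem \ref{t:model} is the one of \cite{Hov}, which requires \emph{functorial} factorizations, and \cite[Theorem 4.8]{BaSc1} does not supply these; this is precisely why the statement is only ``almost'' a special case of that theorem, and it is where the smallness of $\cC$ enters beyond the accessibility count. You locate the ``almost'' only in the description of the fibrations and in the recording of the rank $\omega$, and then assert that Theorem 4.8 ``yields clauses (1) and (2)'' and that all the genuine work, including constructing the factorizations, is carried out in \cite{BaSc1}; as written this does not deliver the theorem in the form stated. The paper handles it by rerunning the proof of \cite[Theorem 4.8]{BaSc1} with \cite[Proposition 3.17]{BaSc1} (the functorial factorization proposition, available because $\cC$ is small) in place of \cite[Proposition 3.16]{BaSc1}. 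Your proposal could be repaired along your own lines --- once one knows the structure is generated in the dual sense by the sets $\cF$ and $\cF\cap\cW$ and that $\Pro(\cC)^{op}$ is locally presentable, the small object argument produces functorial factorizations --- but that step has to be made explicitly; it is not automatic from citing Theorem 4.8.
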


\begin{rem}
The definition of a model category that we refer to in Theorem \ref{t:model}, is the one used in \cite{Hov}. In particular, we require \textbf{functorial} factorizations. This is a stronger conclusion then that of \cite[Theorem 4.8]{BaSc1}, and is achieved because we assume that $\cC$ is small. Notice, that we did not require functorial factorizations in the definition of a weak fibration category. Note also that we only required the existence of finite limits in the definition of a weak fibration category, while in $\Pro(\cC)$ we do get the existence of arbitrary limits and colimits.
\end{rem}

\begin{proof}
Most of the proof goes exactly like the proof of \cite[Theorem 4.8]{BaSc1}, the only difference is that we can use \cite[Proposition 3.17]{BaSc1} instead of \cite[Proposition 3.16]{BaSc1}, and thus obtain functorial factorizations.

It only remains to show that $\Pro(\mcal{C})$ is $\omega$-cocombinatorial, with set of generating fibrations $\mcal{F}$ and set of generating acyclic fibrations $\mcal{F}\cap \mcal{W}$. The category $\mcal{C}$ has finite limits, so $\mcal{C}^{op}$ has finite colimits. By the results of \cite{AR}, the category $\Ind(\mcal{C}^{op})\cong \Pro(\mcal{C})^{op}$ is locally presentable and every object of $\mcal{C}^{op}$ is $\omega$-presentable in $\Ind(\mcal{C}^{op})$.
It thus remains to show that:
$$\mathbf{C}= {}^{\perp}(\mcal{F}\cap \mcal{W}),(\mathbf{C}\cap \mathbf{W})= {}^{\perp}\mcal{F},$$
but this was shown in \cite[Theorem 4.8]{BaSc1}.
\end{proof}

The dual to the notion of a weak fibration category is a weak cofibration category. Namely, a weak cofibration category is  a category ${\cM}$ together with two subcategories, ${\cC}$ and $ {\cW}$, such that $({\cM}^{op},{\cC}^{op},{\cW}^{op})$ is a weak fibration category. The following is a dual formulation of Theorem \ref{t:model}:

\begin{thm}\label{t:model_dual}
Let $(\mcal{M},\mcal{W},\mcal{C})$ be a small ind-admissible weak cofibration category.
Then there exists a model category structure on $\Ind(\mcal{M})$ such that:
\begin{enumerate}
\item The weak equivalences are $\mathbf{W} := Lw^{\cong}(\mcal{W})$.
\item The fibrations are $\mathbf{F} = (\mcal{C}\cap \mcal{W})^{\perp} $.
\item The cofibrations are maps satisfying the left lifting property with respect to all acyclic fibrations.
\end{enumerate}
Moreover, this model category is $\omega$-combinatorial, with $\mcal{C}$ as the set of generating cofibrations and $\mcal{C}\cap \mcal{W}$ as the set of generating acyclic cofibrations.
\end{thm}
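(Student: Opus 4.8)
The plan is to deduce Theorem~\ref{t:model_dual} from Theorem~\ref{t:model} by a purely formal dualization, using the natural isomorphism $\Ind(\cM)^{op}\cong\Pro(\cM^{op})$ recorded above, together with the elementary fact that a model structure on any category $\cD$ induces a model structure on $\cD^{op}$ in which the roles of fibrations and cofibrations are interchanged and the weak equivalences are the opposites of the original ones.

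First I would check that the hypotheses dualize correctly. By definition a weak cofibration category $(\cM,\cW,\cC)$ is one for which $(\cM^{op},\cW^{op},\cC^{op})$ is a weak fibration category, and $\cM$ is small iff $\cM^{op}$ is small. It remains to see that $(\cM,\cW)$ is ind-admissible iff $(\cM^{op},\cW^{op})$ is pro-admissible. This is immediate from Definition~\ref{d:admiss}: under $\Ind(\cM)^{op}\cong\Pro(\cM^{op})$ a level-wise $\cW$-map in a filtered diagram in $\cM$ is exactly the same datum as a level-wise $\cW^{op}$-map in the corresponding cofiltered diagram in $\cM^{op}$, so a morphism lies in $Lw^{\cong}(\cW)\subseteq\Ind(\cM)^\to$ iff its opposite lies in $Lw^{\cong}(\cW^{op})\subseteq\Pro(\cM^{op})^\to$; and the two out of three property is self-dual. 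Thus $(\cM^{op},\cW^{op},\cC^{op})$ is a small pro-admissible weak fibration category.

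Next I would apply Theorem~\ref{t:model} to $(\cM^{op},\cW^{op},\cC^{op})$ (so that $\cC^{op}$ plays the role of the fibrations there), producing a model structure on $\Pro(\cM^{op})$, and then transport it along $\Pro(\cM^{op})\cong\Ind(\cM)^{op}$ and pass to opposites to obtain a model structure on $\Ind(\cM)$. It then only remains to translate the three distinguished classes and the cocombinatoriality assertion through this chain. Here one uses the evident identities, for a class $M$ of morphisms in $\cM$: $Lw^{\cong}(M^{op})=(Lw^{\cong}(M))^{op}$, ${}^{\perp}(M^{op})=(M^{\perp})^{op}$ and $(M^{op})^{\perp}=({}^{\perp}M)^{op}$ (where the superscripts $\perp$ on each side are computed in the respective ambient category). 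Feeding the output of Theorem~\ref{t:model} through these: the weak equivalences $Lw^{\cong}(\cW^{op})$ become $Lw^{\cong}(\cW)$; the cofibrations ${}^{\perp}(\cC^{op}\cap\cW^{op})={}^{\perp}((\cC\cap\cW)^{op})=((\cC\cap\cW)^{\perp})^{op}$ of $\Pro(\cM^{op})=\Ind(\cM)^{op}$ become the fibrations $(\cC\cap\cW)^{\perp}$ of $\Ind(\cM)$; and the fibrations of $\Pro(\cM^{op})$, characterized by the right lifting property against acyclic cofibrations, become the cofibrations of $\Ind(\cM)$, characterized by the left lifting property against acyclic fibrations. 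This yields items (1)--(3). Likewise, ``$\Pro(\cM^{op})$ is $\omega$-cocombinatorial with generating fibrations $\cF^{op}=\cC^{op}$ and generating acyclic fibrations $(\cF\cap\cW)^{op}=(\cC\cap\cW)^{op}$'' is, by the very definition of cocombinatorial together with the convention recorded after Theorem~\ref{t:model0} on $\omega$-presentable generators, the statement that $\Ind(\cM)$ is $\omega$-combinatorial with generating cofibrations $\cC$ and generating acyclic cofibrations $\cC\cap\cW$ (and the functorial factorizations supplied by Theorem~\ref{t:model} under smallness dualize as well).

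Since every ingredient is a routine unwinding of definitions, there is no genuine obstacle here; the only thing that requires care is the bookkeeping --- making sure that the interchange of left/right lifting properties under $(-)^{op}$ is applied consistently, so that the generating cofibrations really come out as $\cC$ itself (rather than as some lifting-closure of it) and that the fibrations come out as $(\cC\cap\cW)^{\perp}$ on the nose.
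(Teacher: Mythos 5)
Your proposal is correct and matches the paper's approach: the paper presents Theorem \ref{t:model_dual} precisely as the dual formulation of Theorem \ref{t:model}, obtained via the isomorphism $\Ind(\mcal{M})^{op}\cong\Pro(\mcal{M}^{op})$ and the standard passage to opposite model structures. Your careful bookkeeping of how $Lw^{\cong}$, the lifting-property classes, and ($\omega$-co)combinatoriality translate under $(-)^{op}$ is exactly the unwinding the paper leaves implicit.
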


\section{Proper morphisms}\label{s:proper}

\subsection{A criterion for the two out of three property}

The pro-admissibility condition on a relative category $\cC$, appearing in Theorem \ref{t:model}, is not intrinsic to $\cC$ (see Definition \ref{d:admiss}). It is useful to be able to deduce the pro-admissibility of $\cC$ only from conditions on $\cC$ itself. In this subsection we give one possible solution to this problem. The idea is a very straightforward generalization of an idea of Isaksen (\cite[Section 3]{Isa}).

\begin{define}
Let $(\cC,\cW)$ be a relative category. A map $f:A\to B$ in $\cC$ will be called:
\begin{enumerate}
\item \emph{Left proper}, if for every push out square of the form
\[
\xymatrix{A\ar[d]^i\ar[r]^f & B\ar[d]^j\\
C\ar[r] & D}
\]
such that $i$ is a weak equivalence, the map $j$ is also a weak equivalence.
\item \emph{Right proper}, if for every pull back square of the form
\[
\xymatrix{C\ar[d]^j\ar[r] & D\ar[d]^i\\
A\ar[r]^f & B}
\]
such that $i$ is a weak equivalence, the map $j$ is also a weak equivalence.
\end{enumerate}
We denote by $LP$ the class of left proper maps in $\cC$ and by $RP$ the class of right proper maps in $\cC$.
\end{define}

\begin{rem}\label{r:sharp}
The notion of a right proper map is related to the notion of a sharp map defined by Rezk in \cite{Rez}. A sharp map is a map such that all its base changes are right proper. In other words, the class of sharp maps is the largest class of maps that is contained in the right proper maps and is closed under base change (see Definition \ref{d:PB}). A sharp map is called a weak fibration by Cisinski and a fibrillation by Barwick and Kan.
\end{rem}

\begin{example}\label{e:proper_map}
Let $\cM$ be a model category. Then:
\begin{enumerate}
\item Every acyclic cofibration in $\cM$ is a left proper map in $(\cM,\cW)$.
\item Every acyclic fibration in $\cM$ is a right proper map in $(\cM,\cW)$.
\item The model category $\cM$ is left proper iff every cofibration in $\cM$ is a left proper map in $(\cM,\cW)$.
\item The model category $\cM$ is right proper iff every fibration in $\cM$ is a right proper map in $(\cM,\cW)$.
\end{enumerate}
\end{example}

\begin{define}
Let $(\cC,\cW)$ be a relative category. Then $(\cC,\cW)$ will be said to have \emph{proper factorizations}, if the following hold:
\begin{enumerate}
\item $\Mor(\cC)=RP\circ LP$.
\item $\Mor(\cC)=RP\circ \cW$.
\item $\Mor(\cC)=\cW\circ LP$.
\end{enumerate}
\end{define}

\begin{lem}
Let $\cM$ be a proper model category. Then the relative category $(\cM,\cW)$ has proper factorizations.
\end{lem}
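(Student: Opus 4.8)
The plan is to verify the three factorization conditions of the definition of proper factorizations, using the model structure on $\cM$ together with the properness hypothesis and the facts recorded in Example~\ref{e:proper_map}. The key observation is that the model category factorizations already provide the building blocks we need, once we translate the model-categorical notions (acyclic cofibration, acyclic fibration, cofibration, fibration) into the relative-category notions of left proper and right proper maps via Example~\ref{e:proper_map}.

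First I would establish condition (3), namely $\Mor(\cM) = \cW \circ LP$. Given any map $f: A \to B$ in $\cM$, use the model category factorization to write $f = p \circ i$ where $i: A \to C$ is a cofibration and $p: C \to B$ is an acyclic fibration. By Example~\ref{e:proper_map}(2), $p$ is a right proper map, hence in particular $p \in \cW$ (acyclic fibrations are weak equivalences). By Example~\ref{e:proper_map}(3), since $\cM$ is left proper, the cofibration $i$ is a left proper map, i.e. $i \in LP$. This gives $f = p \circ i$ with $p \in \cW$ and $i \in LP$, which is exactly condition (3). Dually, for condition (2), $\Mor(\cM) = RP \circ \cW$: factor $f = p \circ i$ with $i: A \to C$ an acyclic cofibration and $p: C \to B$ a fibration. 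Then $i \in \cW$, and by Example~\ref{e:proper_map}(4), since $\cM$ is right proper, $p$ is a right proper map, so $p \in RP$; this gives condition (2).

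Finally, for condition (1), $\Mor(\cM) = RP \circ LP$: again factor an arbitrary $f: A \to B$ as $f = p \circ i$ with $i: A \to C$ a cofibration and $p: C \to B$ an acyclic fibration (or alternatively a cofibration followed by a fibration would also work). By Example~\ref{e:proper_map}(3), $i$ is left proper since $\cM$ is left proper; by Example~\ref{e:proper_map}(2), $p$ is right proper since it is an acyclic fibration. Hence $f = p \circ i$ with $p \in RP$ and $i \in LP$, establishing condition (1). Since all three conditions hold, $(\cM, \cW)$ has proper factorizations.

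There is no real obstacle here: the proof is essentially a bookkeeping exercise assembling the pieces from Example~\ref{e:proper_map} and the existence of model-categorical factorizations. The only point requiring a moment's care is making sure that in each of the three cases we choose the factorization whose two halves land in the correct two classes — and in particular that the properness hypothesis on $\cM$ is invoked precisely where a bare (co)fibration, rather than an acyclic one, needs to be recognized as (left or right) proper. One should also note that the factorizations in a model category can be taken functorially, though functoriality is not needed for the statement as phrased.
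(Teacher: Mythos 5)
Your proof is correct and is essentially identical to the paper's: all three factorizations are obtained from the standard model-categorical factorizations, with Example~\ref{e:proper_map} together with left (resp.\ right) properness used exactly where you invoke them. Nothing further is needed.
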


\begin{proof}
\begin{enumerate}
\item $\Mor(\cM)=RP\circ LP$ is shown by factoring every map into a cofibration followed by an acyclic fibration (see Example \ref{e:proper_map}).
\item $\Mor(\cC)=RP\circ \cW$ is shown by factoring every map into an acyclic cofibration followed by a fibration (see Example \ref{e:proper_map}).
\item $\Mor(\cC)=\cW\circ LP$ is shown by factoring every map into a cofibration followed by an acyclic fibration (see Example \ref{e:proper_map}).
\end{enumerate}
\end{proof}

The following is shown in \cite[Lemma 3.2]{Isa} (see also \cite[Remark 3.3]{Isa}):
\begin{lem}\label{l:factor}
Let $\cC$ be a category, and let $N$ and $M$ be classes of morphisms in $\cC$, such that $\Mor(\cC)=M\circ N$. Let $T$ be a cofiltered category and let $f:\{X_t\}_{t\in T}\to \{Y_t\}_{t\in T}$ be a natural transformation, that is, a map in the functor  category $\cC^T$. Suppose that $f$ is an isomorphism as a map in $\Pro(\cC)$ (or $\Ind(\cC)$).

Then there exist a cofiltered category $J$, a cofinal functor $p:J\to T$ and  a factorization $p^*X\xrightarrow{g} H_f \xrightarrow{h} p^*Y$ of $p^*f:p^*X\to p^*Y$ in the category $C^{J}$  such that $h$ is a level-wise $\mcal{M}$ map, $g$ is a level-wise $N$ map, and $g,h$ are isomorphisms as maps in $\Pro(\cC)$ (or $\Ind(\cC)$).
\end{lem}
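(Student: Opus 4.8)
The proof I have in mind is an adaptation of the argument of Isaksen \cite[Section~3]{Isa}. I will describe the $\Pro$ case; the $\Ind$ case follows by passing to opposite categories and classes and using $\Ind(\cC)^{op}\cong\Pro(\cC^{op})$. The obstruction to the naive approach is that the factorization $\Mor(\cC)=M\circ N$ is not assumed functorial: although each component $f_t\colon X_t\to Y_t$ can be factored as $X_t\xrightarrow{\gamma_t}Z_t\xrightarrow{\eta_t}Y_t$ with $\gamma_t\in N$ and $\eta_t\in M$, these factorizations need not assemble into a diagram indexed by $T$, so there is in general no level-wise factorization of $f$ inside $\cC^T$ itself. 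The remedy is to enlarge the indexing category so that the choice of a factorization becomes part of the data.

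Concretely, I would let $J$ be the category whose objects are quadruples $(t,Z,\gamma,\eta)$ with $t\in T$ and $X_t\xrightarrow{\gamma}Z\xrightarrow{\eta}Y_t$ a factorization of $f_t$ with $\gamma\in N$, $\eta\in M$, and whose morphisms $(t,Z,\gamma,\eta)\to(t',Z',\gamma',\eta')$ are pairs $(\alpha,\beta)$ with $\alpha\colon t\to t'$ a morphism in $T$ and $\beta\colon Z\to Z'$ a morphism in $\cC$ satisfying $\beta\gamma=\gamma' X_\alpha$ and $\eta'\beta=Y_\alpha\eta$. Let $p\colon J\to T$ be the forgetful functor $p(t,Z,\gamma,\eta)=t$. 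Then $(t,Z,\gamma,\eta)\mapsto Z$ defines a diagram $H_f\colon J\to\cC$, and the assignments $\gamma$ and $\eta$ give natural transformations $g\colon p^*X\to H_f$ and $h\colon H_f\to p^*Y$ with $h\circ g=p^*f$; moreover $g\in Lw(N)$ and $h\in Lw(M)$ by construction, so the level-wise $N$/$M$ assertions will be immediate once $J$ and $p$ are shown to behave well.

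The technical heart of the argument is to verify that $J$ is small and cofiltered and that $p$ is cofinal. Smallness is arranged by fixing a set of representatives of factorizations. The cofiltered conditions for $J$, together with the non-emptiness and connectedness of the over-categories $p_{/t}$ required for cofinality, are checked by repeatedly invoking cofilteredness of $T$: given objects of $J$ lying over $T$-objects $s_1,s_2$ with compatible maps down to a common $t$, one descends in $T$ to a common $u$ with $\mu_i\colon u\to s_i$ and produces a single factorization over $u$ mapping compatibly to the two given ones — for instance a ``trivial'' factorization at $u$ together with the composites $\gamma_{s_i}X_{\mu_i}$ — and similarly for the coequalizer-type conditions. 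This bookkeeping is exactly what is carried out in \cite[Lemma~3.2]{Isa} (see also \cite[Remark~3.3]{Isa}), and completing it in full is the step I expect to be the main obstacle.

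Granting that $p$ is cofinal, the rest is formal. By Lemma \ref{l:cofinal} the canonical maps $\nu_{p,X}\colon X\to p^*X$ and $\nu_{p,Y}\colon Y\to p^*Y$ are isomorphisms in $\Pro(\cC)$; since $f$ is a pro-isomorphism, it follows that $p^*f=h\circ g$ is one as well. To conclude that $g$ and $h$ are themselves pro-isomorphisms one uses that the construction of $J$ exhibits $H_f$ as pro-isomorphic to both $p^*X$ and $p^*Y$, compatibly with $g$ and $h$: restricting $H_f$ along the cofinal subdiagram of $J$ spanned by the ``trivial'' factorizations and applying Lemma \ref{l:cofinal} shows that $g$ is an isomorphism in $\Pro(\cC)$, and then $h=(p^*f)\circ g^{-1}$ is one too. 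It is worth noting that the hypothesis that $f$ is a pro-isomorphism is used only at this final stage; the factorization itself is produced for an arbitrary natural transformation. This completes the plan for the $\Pro$ case, and the $\Ind$ case is obtained by dualizing as indicated above.
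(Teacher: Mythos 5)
The paper does not prove this lemma itself; it quotes it from \cite[Lemma 3.2]{Isa} (and \cite[Remark 3.3]{Isa}), so your attempt has to be judged against Isaksen's argument. The crux of your plan --- that the category $J$ of all pointwise factorizations $(t,Z,\gamma,\eta)$ is cofiltered, that $p:J\to T$ is cofinal, and that the ``trivial'' factorizations form a cofinal subdiagram --- is exactly the part that does not go through, and you have not supplied an argument for it. A morphism in your $J$ requires a comparison map $\beta:Z\to Z'$ compatible with both $\gamma$'s and both $\eta$'s, and for two independently chosen factorizations there is no reason such a $\beta$ exists: $M$ and $N$ are arbitrary classes, with no lifting property, no functoriality, and no closure assumptions. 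Your proposed remedy, mapping out of the ``trivial'' factorization $(X_u,\mathrm{id},f_u)$, fails because that triple is an object of $J$ only if $\mathrm{id}\in N$ and $f_u\in M$; in the very application this lemma is used for (Proposition \ref{p:compose}, with $N=LP$, $M=RP$), $f_u$ is an arbitrary map of $\cC$ and need not be right proper, so neither the cofilteredness of $J$, nor the cofinality of $p$, nor the cofinality of the trivial-factorization subdiagram (which is what you use to see that $g$ is a pro-isomorphism) is available. There is also a secondary issue: $\cC$ is not assumed small, and ``fixing a set of representatives of factorizations'' does not obviously cut $J$ down to a small category, whereas indexing categories of pro-objects must be small.

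The deeper problem is signalled by your closing remark that the hypothesis that $f$ is a pro-isomorphism enters only at the last stage. In Isaksen's proof that hypothesis is used from the start: being an isomorphism in $\Pro(\cC)$ provides, after refining the index, partial inverses $a:Y_u\to X_t$ with $f_t\circ a=Y_{u\to t}$ and $a\circ f_u=X_{u\to t}$, and it is precisely these maps that are used to manufacture the transition morphisms of the middle diagram $H_f$ (schematically, a connecting map $Z_s\to Z_t$ is $\gamma_t\circ a\circ Y_{s\to u}\circ\eta_s$, which one checks is compatible with the $\gamma$'s and $\eta$'s), i.e.\ to relate the unrelated levelwise factorizations. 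For a general natural transformation and arbitrary classes $M,N$ with only $\Mor(\cC)=M\circ N$, no such gluing is possible, so a proof that claims to produce the reindexed factorization before invoking that $f$ is a pro-isomorphism cannot be correct as it stands. To repair your write-up you should either reproduce Isaksen's construction (building $J$ from pairs of indices together with chosen partial inverses and checking the two compatibility squares as above), or simply cite \cite[Lemma 3.2 and Remark 3.3]{Isa} as the paper does.
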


The following proposition is our main motivation for introducing the concepts of left and right proper morphisms:
\begin{prop}\label{p:compose}
Let $(\cC,\cW)$ be a relative category, and let $X\xrightarrow{f} Y\xrightarrow{g} Z $ be a pair of composable morphisms in $\Pro(\cC)$ (or $\Ind(\cC)$). Then:
\begin{enumerate}
\item If $\cC$ has finite limits and colimits, and $\Mor(\cC)=RP\circ LP$, then $f,g\in Lw^{\cong}(\cW)$ implies that $g\circ f\in Lw^{\cong}(\cW)$.
\item If $\cC$ has finite limits, and $\Mor(\cC)=RP\circ \cW$, then $g,g\circ f\in Lw^{\cong}(\cW)$ implies that $f\in Lw^{\cong}(\cW)$.
\item If $\cC$ has finite colimits, and $\Mor(\cC)=\cW\circ LP$, then $f,g\circ f\in Lw^{\cong}(\cW)$ implies that $g\in Lw^{\cong}(\cW)$.
\end{enumerate}
\end{prop}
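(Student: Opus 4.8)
\subsubsection*{Proof proposal}

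The plan is to reduce the statement to a level-wise assertion inside $\cC$ --- where two out of three for $\cW$ and the properness conditions can be applied directly --- and to bridge the gap between ``essentially level-wise'' (membership in $Lw^{\cong}(\cW)$) and ``level-wise'' using Lemma~\ref{l:factor}. I will describe part (2) in some detail, since parts (1) and (3) follow the same pattern (part (3) is the formal dual: pushouts, $\Mor(\cC)=\cW\circ LP$ and finite colimits replace pullbacks, $\Mor(\cC)=RP\circ\cW$ and finite limits; part (1) uses both a pushout and a pullback around a $\Mor(\cC)=RP\circ LP$ factorization of an isomorphism of $\Pro(\cC)$). First I would use Lemma~\ref{every map natural}, and the fact that $\Pro(\cC^{\cD})\to\Pro(\cC)^{\cD}$ is an equivalence for a finite loop-free category $\cD$ (see \cite{AM}, \cite{BaSc}), to replace the whole configuration --- the pair $X\xrightarrow{f}Y\xrightarrow{g}Z$ together with the commutative squares in $\Pro(\cC)$ exhibiting $g,\,g\circ f\in Lw^{\cong}(\cW)$ --- by an isomorphic one indexed by a single small cofiltered category with all arrows natural transformations. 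The standard construction of this common index comes with cofinal functors onto each original index, so ``being a level-wise $\cW$-map'' is preserved; in particular the level-wise $\cW$-maps that $g$ and $g\circ f$ are isomorphic to remain level-wise $\cW$, and the comparison maps remain isomorphisms in $\Pro(\cC)$ --- even though $g$ and $g\circ f$ themselves cannot be rectified to level-wise $\cW$-maps, which is precisely why the proposition is not formal. After this I may also assume, by replacing $f$ by an isomorphic morphism, that the comparison square for $g\circ f$ has identity top edge.

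The second step straightens the comparison isomorphisms. They are now natural transformations that are isomorphisms in $\Pro(\cC)$, so Lemma~\ref{l:factor} applies to them with $\Mor(\cC)=RP\circ\cW$. After one further cofinal reindexing, carried out simultaneously for all of them, I would factor each comparison isomorphism level-wise as a level-wise $\cW$-map followed by a level-wise $RP$-map, both pieces again isomorphisms in $\Pro(\cC)$; absorbing the $\cW$-pieces into the level-wise $\cW$-maps already present (composites of level-wise $\cW$ natural transformations over a common index are level-wise $\cW$), one arrives at the following situation. There are level-wise $\cW$-maps $\bar g$ (with source $\tilde Y$) and $\bar k$ (with source $X$), isomorphisms in $\Pro(\cC)$ that are level-wise $RP$-maps $r\colon H\to Z$ and $r'\colon H'\to Z$, and a pro-isomorphism $\phi_Y\colon Y\to\tilde Y$, such that $f$ is isomorphic (in $\Pro(\cC)^{\to}$) to $h:=\phi_Y\circ f\colon X\to\tilde Y$, and $r\circ\bar g\circ h = g\circ f = r'\circ\bar k$. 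Pulling back $r$ against $r'$ over their common target $Z$ (a pullback in $\Pro(\cC)$, which has finite limits since $\cC$ does), both projections become pro-isomorphisms, and a short diagram chase produces a pro-isomorphism $\mu$ with $\bar g\circ h=\mu\circ\bar k$. Applying Lemma~\ref{l:factor} once more, to $\mu$, and reindexing, I factor $\mu$ level-wise as $\mu_1$ (level-wise $\cW$) followed by $\mu_2$ (level-wise $RP$, a pro-isomorphism), so that $\bar g\circ h=\mu_2\circ(\mu_1\circ\bar k)$ with $\mu_1\circ\bar k$ level-wise $\cW$ and with source $X$.

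The last step is the level-wise pullback. Working at a fixed index $u$, form the pullback of $\mu_{2}$ along $\bar g$. Since $\bar g_u\in\cW$ and $\mu_{2,u}\in RP$, right-properness of $\mu_{2,u}$ forces the projection $\pi'_u$ onto the source of $\mu_1\circ\bar k$ to lie in $\cW$; the other projection $\pi$ is a base change of the pro-isomorphism $\mu_2$, hence a pro-isomorphism. The universal property yields a map $\eta$ with $\pi\circ\eta=h$ and $\pi'\circ\eta=\mu_1\circ\bar k$. Now two out of three for $\cW$ in $\cC$, applied level-wise to $\eta$, to $\pi'$, and to their composite $\mu_1\circ\bar k$ (both of the latter being level-wise $\cW$), forces $\eta$ to be level-wise in $\cW$. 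Hence $\eta\in Lw(\cW)$, and $h=\pi\circ\eta\in Lw^{\cong}(\cW)$ (composing a $Lw^{\cong}(\cW)$-map with a pro-isomorphism stays in $Lw^{\cong}(\cW)$), so finally $f\in Lw^{\cong}(\cW)$. For part (3) one dualizes throughout; for part (1), after the analogous reduction writing $g\circ f=\bar g\circ\nu\circ\bar f$ with $\bar f,\bar g$ level-wise $\cW$ and $\nu$ a pro-isomorphism, one decomposes $\nu$ via $\Mor(\cC)=RP\circ LP$ into a level-wise $LP$-map followed by a level-wise $RP$-map and eliminates it using one pushout (left-properness, finite colimits) and one pullback (right-properness, finite limits), again landing on a composite of level-wise $\cW$ natural transformations over a single index.

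The main obstacle is the bookkeeping of the second step: arranging the successive reindexings and the level-wise factorizations supplied by Lemma~\ref{l:factor} so that they are mutually compatible, so that the purely categorical pullback/pushout computations of the last step really do assemble into morphisms of $\Pro(\cC)$, and so that ``level-wise in $\cW$'' is never lost in the process. This is exactly the point where all the hypotheses --- the relevant factorization of $\Mor(\cC)$, the existence of finite limits and/or colimits in $\cC$, and the two properness conditions --- become indispensable, and it is the reason each of the three cases needs its own factorization hypothesis rather than following from the others.
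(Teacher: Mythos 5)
Your plan for part (1) does match the paper's actual proof (rectify the single comparison pro-isomorphism between the two middle objects, factor it level-wise via Lemma \ref{l:factor} into $LP$ followed by $RP$, then do one level-wise pushout and one level-wise pullback and use properness), but part (2) --- the only case you treat in detail, and by duality part (3) --- has a genuine gap at the final step. The map $\eta:X\to P$ you obtain from the universal property of the pullback is only a morphism in $\Pro(\cC)$: since $\pi\circ\eta=h$ and $h$ is (up to pro-isomorphism) the arbitrary map $f$ whose membership in $Lw^{\cong}(\cW)$ is in question, $\eta$ cannot be assumed to be a natural transformation, so ``two out of three for $\cW$ in $\cC$ applied level-wise to $\eta$'' is not meaningful. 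The only information available is that $\pi'$ and $\pi'\circ\eta$ lie in $Lw^{\cong}(\cW)$, and deducing $\eta\in Lw^{\cong}(\cW)$ from that is exactly statement (2) of the proposition --- the argument is circular at the decisive moment. Nor can this be patched by refining the index so that the square commutes level-wise: equality of two level maps in $\Pro(\cC)$ only gives level-wise equality after composing with structure maps of $X$, and those composites destroy the level-wise $\cW$ property of $\pi'\circ\eta$, so the level-wise two-out-of-three still yields nothing. This asymmetry is why the composition case (1) can be proved by directly exhibiting a composite of level-wise $\cW$ maps isomorphic to $g\circ f$, whereas the cancellation cases (2) and (3) need a different endgame; the paper handles them by generalizing the separate argument of Isaksen's Lemma 3.6 rather than by the scheme you propose.

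A secondary problem is your first step: you assert that the whole configuration can be rectified over a single cofiltered index ``with all arrows natural transformations'' while the given level-wise $\cW$ representatives of $g$ and $g\circ f$ \emph{remain} level-wise $\cW$, because the construction ``comes with cofinal functors onto each original index.'' Being level-wise $\cW$ is not invariant under isomorphism in $\Pro(\cC)^{\to}$ (that is precisely the difference between $Lw(\cW)$ and $Lw^{\cong}(\cW)$), and the rectification results behind Lemma \ref{every map natural} and the equivalence $\Pro(\cC^{\cD})\simeq\Pro(\cC)^{\cD}$ only return the new edges up to such isomorphism; they do not make them of the form $p^{*}(\text{original edge})$ for all edges simultaneously. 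The paper's proof of (1) is arranged specifically to avoid this: it never rectifies $f$ and $g$ themselves, only the one comparison pro-isomorphism, and then pulls the already level-wise data back along the resulting cofinal functors, which is the only operation that trivially preserves level-wise $\cW$. A corrected write-up of (2) and (3) would have to follow that more economical pattern and, at the end, replace your two-out-of-three step by an argument that actually constructs a level-wise $\cW$ map together with an isomorphism to $f$ (respectively $g$) in the arrow category.
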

\begin{proof}
For simplicity of writing we only examine the $\Pro(\cC)$ case.

We show 1. The proof is a straightforward generalization of the proof of \cite[Lemma 3.5]{Isa}.

Since $f,g\in Lw^{\cong}(\cW)$ there exists a diagram in $\Pro(\cC)$,
$$\xymatrix{X''\ar[r] & Y'' & & \\
             X\ar[u]^{\cong}\ar[r]^f & Y\ar[u]^{\cong} \ar[r]^g & Z\\
               &  Y'\ar[u]^{\cong}\ar[r] & Z',\ar[u]^{\cong}}$$
such that the vertical maps are isomorphisms in $\Pro(\cC)$ and such that $Y'\to Z'$ is a natural transformation indexed by $I$ that is level-wise in $\cW$ and $X''\to Y''$ is a natural transformation indexed by $J$ that is level-wise in $\cW$.

Let $Y'\xrightarrow{\cong} Y''$ denote the composition $Y'\xrightarrow{\cong}Y\xrightarrow{\cong} Y''$. It is an isomorphism in $\Pro(\cC)$ (but not necessarily a level-wise isomorphism). It follows from \cite[Appendix 3.2]{AM} that there exists a cofiltered category $K$, cofinal functors $p:K\to I$ and $q:K\to J$, and a map in $\cC^K$,
$$q^*Y'\xrightarrow{}p^*Y'',$$
such that there is a commutative diagram in $\Pro(\cC)$,
$$\xymatrix{q^*Y'\ar[r]^{\cong} \ar[d]^{\cong} & p^*Y''\ar[d]^{\cong}\\
             Y'\ar[r]^{\cong} & Y'',}$$
with all maps isomorphisms. Thus we have a diagram in $\cC^K$,
$$p^*X''\xrightarrow{} p^*Y''\xleftarrow{\cong}q^*Y'\xrightarrow{} q^*Z',$$
such that the first and last maps are level-wise in $\cW$ and the middle map is an isomorphism as a map in $\Pro(\cC)$ (but not necessarily a level-wise isomorphism).

Since $\Mor(\cC)=RP\circ LP$, we get by Lemma \ref{l:factor}, applied for $M=RP$ and $N=LP$, that after pulling back by a cofinal functor $T\to K$ we obtain a diagram in $\cC^T$,
$$A\xrightarrow{} B\xleftarrow{\cong}E\xleftarrow{\cong}C\xrightarrow{} D,$$
such that the first and last maps are level-wise in $\cW$, the second map is level-wise right proper and an isomorphism in $\Pro(\cC)$, and the third map is level-wise left proper and an isomorphism in $\Pro(\cC)$.

By Corollary 3.19 of \cite{BaSc}, since $\cC$ has finite limits and colimits, the pull back and push out in $\Pro(\cC)$ of a diagram in $\cC^T$ can be computed level-wise. We thus get the following diagram in $\cC^T$:
$$\xymatrix{A\ar[r]^{Lw(\cW)} & B & & \\
             A\times_B E\ar[r]^{Lw(\cW)}\ar[u]^{\cong} & E\ar[u]_{Lw(RP)}^{\cong} \ar[r]^{Lw(\cW)} & E\coprod_C D\\
               &  C\ar[u]_{Lw(LP)}^{\cong}\ar[r]_{Lw(\cW)} & D\ar[u]^{\cong}}$$
where $\cong$ indicates an isomorphism in $\Pro(\cC)$.

It follows that the composition
$$A\times_B E\xrightarrow{Lw(\cW)} E\xrightarrow{Lw(\cW)}  E\coprod_C D$$
is a level-wise $\cW$ map that is isomorphic, as a map in $\Pro(\cC)$, to the composition $g\circ f$. Thus we obtain that $g\circ f\in Lw^{\cong}(\cW)$.

It is not hard to show 2. and 3. using the same type of generalization to the proof of \cite[Lemma 3.6]{Isa}.
\end{proof}

\begin{cor}\label{r:proper}
Let $(\cC,\cW)$ be a relative category that has finite limits and colimits and proper factorizations. Then $(\cC,\cW)$ is admissible (see Definition \ref{d:admiss}). In particular, if $\cC$ is a proper model category then $(\cC,\cW)$ is admissible.
\end{cor}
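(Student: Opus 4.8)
The plan is to reduce the statement directly to the three clauses of Proposition \ref{p:compose}. By Definition \ref{d:admiss}, proving that $(\cC,\cW)$ is admissible amounts to proving that $Lw^{\cong}(\cW)$ has the two out of three property, once inside $\Pro(\cC)^{\to}$ and once inside $\Ind(\cC)^{\to}$. For a composable pair $X\xrightarrow{f}Y\xrightarrow{g}Z$, the two out of three property splits into three implications: (i) if $f,g\in Lw^{\cong}(\cW)$ then $g\circ f\in Lw^{\cong}(\cW)$; (ii) if $g,g\circ f\in Lw^{\cong}(\cW)$ then $f\in Lw^{\cong}(\cW)$; (iii) if $f,g\circ f\in Lw^{\cong}(\cW)$ then $g\in Lw^{\cong}(\cW)$. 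These are exactly clauses (1), (2) and (3) of Proposition \ref{p:compose}.

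So the proof proceeds as follows. First I would check that the hypotheses of all three clauses of Proposition \ref{p:compose} are met: $\cC$ has finite limits and colimits by assumption, and having proper factorizations supplies $\Mor(\cC)=RP\circ LP$, $\Mor(\cC)=RP\circ\cW$ and $\Mor(\cC)=\cW\circ LP$, which are precisely the factorization hypotheses of clauses (1), (2) and (3) respectively (clause (2) needs only finite limits and clause (3) only finite colimits, both of which are available). Since Proposition \ref{p:compose} is stated for $\Pro(\cC)$ and $\Ind(\cC)$ simultaneously, applying it yields implications (i)--(iii) in both $\Pro(\cC)^{\to}$ and $\Ind(\cC)^{\to}$, hence the two out of three property for $Lw^{\cong}(\cW)$ in both arrow categories, i.e. $(\cC,\cW)$ is pro-admissible and ind-admissible, hence admissible. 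If one additionally wants $(\Pro(\cC),Lw^{\cong}(\cW))$ to be a relative category in the sense of Definition \ref{d:rel}, one notes separately that $Lw^{\cong}(\cW)$ contains all isomorphisms, since an isomorphism of $\Pro(\cC)$ is, as an object of the arrow category, isomorphic to an identity map, which is level-wise in $\cW$; but Definition \ref{d:admiss} only asks for the two out of three property.

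For the "in particular" clause, I would observe that a proper model category has all small limits and colimits, in particular finite ones, and that by the lemma immediately preceding Lemma \ref{l:factor} the underlying relative category $(\cC,\cW)$ of a proper model category has proper factorizations; hence the first part of the corollary applies verbatim.

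I do not expect a genuine obstacle here: Proposition \ref{p:compose} carries all the weight. The only point requiring care is the bookkeeping, namely matching each of the three cases of the two out of three property with the correct clause of Proposition \ref{p:compose} and with the correct factorization hypothesis supplied by proper factorizations, together with the observation that the finite limit and colimit hypotheses needed clause by clause are all subsumed by the standing assumption that $\cC$ has finite limits and colimits.
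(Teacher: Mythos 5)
Your proof is correct and is exactly the argument the paper intends: the corollary is an immediate consequence of the three clauses of Proposition \ref{p:compose} (applied in both $\Pro(\cC)$ and $\Ind(\cC)$), with the ``in particular'' clause supplied by the lemma that a proper model category has proper factorizations.
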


\subsection{Almost model categories}
Corollary \ref{r:proper} gives sufficient conditions for the admissibility of a relative category and, in particular, of a weak (co)fibration category. However, in some interesting examples these conditions are too restrictive. Namely, in some situations there is a natural mapping cylinder factorization (see the proof of Theorem \ref{l:admiss2}) which can be shown to give factorizations of the forms $\Mor(\cM)=RP\circ LP$ and $\Mor(\cM)=\cW\circ LP$ but not $\Mor(\cM)=RP\circ \cW$.
We will therefore need to use an  auxiliary notion that is more general than a model category, which we call an \emph{almost model category}.

\begin{define}
An \emph{almost model category} is a quadruple $(\cM,\cW,\cF,\cC)$ satisfying all the axioms of a model category, except (maybe) the two out of three property for $\cW$. More precisely, an almost model category satisfies:
\begin{enumerate}
\item $\cM$ is complete and cocomplete.
\item $\cW$ is a class of morphisms in $\cM$ that is closed under retracts.
\item $\cF,\cC$ are subcategories of $\cM$ that are closed under retracts.
\item $\cC\cap \cW\subseteq{}^{\perp}\cF$  and $\cC\subseteq{}^{\perp}(\cF\cap\cW)$.
\item There exist functorial factorizations in $\cM$ into a map in $\cC\cap \cW$ followed by a map in $\cF$, and into a map in $\cC$ followed by a map in $\cF\cap \cW$.
\end{enumerate}
\end{define}

The following lemma can be proven just as in the case of model categories (see for example \cite[Lemma 1.1.10]{Hov}):
\begin{lem}\label{l:lifting}
In an almost model category $(\cM,\cW,\cF,\cC)$ we have:
\begin{enumerate}
\item $\cC\cap \cW={}^{\perp}\cF$.
\item $\cC={}^{\perp}(\cF\cap\cW)$.
\item $\cF\cap \cW=\cC^{\perp}$.
\item $\cF=(\cC\cap\cW)^{\perp}$.
\end{enumerate}
\end{lem}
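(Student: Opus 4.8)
The plan is to run the classical \emph{retract argument} (as in \cite[Lemma 1.1.10]{Hov}), checking that it nowhere invokes the two out of three property, so that it applies verbatim to an almost model category. Recall the retract argument: if a morphism $f$ factors as $f=p\circ i$ and $f$ has the left lifting property with respect to $p$, then $f$ is a retract of $i$ in the arrow category $\cM^{\to}$; dually, if $f=p\circ i$ and $f$ has the right lifting property with respect to $i$, then $f$ is a retract of $p$. This is a diagram chase that uses only the existence of a single lift, together with the identity maps, and in particular uses none of the model category axioms.

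First I would establish (1). The inclusion $\cC\cap\cW\subseteq{}^{\perp}\cF$ is part of the definition of an almost model category (axiom (4)). For the reverse inclusion, take $f\in{}^{\perp}\cF$ and use the functorial factorization of axiom (5) to write $f=p\circ i$ with $i\in\cC\cap\cW$ and $p\in\cF$. Since $f$ has the left lifting property with respect to $p\in\cF$, the retract argument shows that $f$ is a retract of $i$; as $\cC$ and $\cW$ are closed under retracts (axioms (2) and (3)), we conclude $f\in\cC\cap\cW$. The proof of (2) is identical in form: $\cC\subseteq{}^{\perp}(\cF\cap\cW)$ is axiom (4), and for the converse one factors $f\in{}^{\perp}(\cF\cap\cW)$ as $f=p\circ i$ with $i\in\cC$ and $p\in\cF\cap\cW$, so that $f$ is a retract of $i$ and hence lies in $\cC$ by closure under retracts.

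Next I would deduce (3) and (4) from (1) and (2) together with the elementary fact that for any class of morphisms $M$ one has $M\subseteq({}^{\perp}M)^{\perp}$. For (3): by (2) we have $\cC={}^{\perp}(\cF\cap\cW)$, hence $\cF\cap\cW\subseteq({}^{\perp}(\cF\cap\cW))^{\perp}=\cC^{\perp}$. Conversely, given $f\in\cC^{\perp}$, factor $f=p\circ i$ with $i\in\cC$ and $p\in\cF\cap\cW$ (axiom (5)); since $f$ has the right lifting property with respect to $i\in\cC$, the dual retract argument makes $f$ a retract of $p$, and closure of $\cF$ and $\cW$ under retracts gives $f\in\cF\cap\cW$. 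The proof of (4) is the mirror image: by (1), $\cC\cap\cW={}^{\perp}\cF$, so $\cF\subseteq({}^{\perp}\cF)^{\perp}=(\cC\cap\cW)^{\perp}$; and for $f\in(\cC\cap\cW)^{\perp}$ one factors $f=p\circ i$ with $i\in\cC\cap\cW$, $p\in\cF$, whence $f$ is a retract of $p$ and therefore lies in $\cF$.

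The only point that genuinely requires attention — and the reason the lemma is worth stating separately from its model category counterpart — is verifying that at no stage is the two out of three property of $\cW$ used: the retract argument needs only a lift and some identity morphisms, and every conclusion of the form ``$f\in\cW$'' is obtained solely from $\cW$ being closed under retracts, never from composing or cancelling weak equivalences. I expect this bookkeeping to be the main (and essentially the only) thing to check.
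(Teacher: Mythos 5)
Your proposal is correct and is exactly the argument the paper intends: the paper proves this lemma by citing the retract argument of \cite[Lemma 1.1.10]{Hov}, which, as you verify, uses only the lifting axioms, the functorial factorizations, and closure under retracts, never the two out of three property.
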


\begin{define}\label{d:almost_admiss}
A relative category $(\cC,\cW)$ will be called \emph{almost pro-admissible}, if $Lw^{\cong}(\cW)\subseteq  \Pro(\cC)^\to$ satisfies the following portion of the two out of three property:

For every pair of composable morphisms in $\Pro(\cC)$: $X\xrightarrow{f} Z\xrightarrow{g} Y $ we have:
\begin{enumerate}
\item If $f,g$ belong to $Lw^{\cong}(\cW)$ then $g\circ f\in Lw^{\cong}(\cW)$.
\item If $g,g\circ f$ belong to $Lw^{\cong}(\cW)$ then $f\in Lw^{\cong}(\cW)$.
\end{enumerate}
\end{define}

We now prove the following generalization of Theorem \ref{t:model}:

\begin{thm}\label{t:almost_model}
Let $(\cC,\cW,\cF)$ be a small almost pro-admissible weak fibration category.
Then there exists an almost model category structure on $\Pro(\cC)$ such that:
\begin{enumerate}
\item The weak equivalences are $\mathbf{W} := Lw^{\cong}(\cW)$.
\item The cofibrations are $\mathbf{C} :=  {}^{\perp} (\cF\cap \cW)$.
\item The fibrations are maps satisfying the right lifting property with respect to all acyclic cofibrations.
\end{enumerate}
Furthermore, we have $\mathbf{C}  \cap \mathbf{W}= {}^{\perp} \cF.$
\end{thm}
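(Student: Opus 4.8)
The plan is to re-run the proof of Theorem \ref{t:model} — that is, the proof of \cite[Theorem 4.8]{BaSc1}, with \cite[Proposition 3.17]{BaSc1} replacing \cite[Proposition 3.16]{BaSc1} to secure functorial factorizations — and to check that, once the hypothesis of pro-admissibility is relaxed to almost pro-admissibility, one still obtains every axiom of an almost model category. Concretely I would set $\mathbf{W} := Lw^{\cong}(\cW)$, $\mathbf{C} := {}^{\perp}(\cF\cap\cW)$, and let $\mathbf{F}$ be the class of maps having the right lifting property with respect to $\mathbf{C}\cap\mathbf{W}$, and then verify the five axioms of an almost model category in turn.

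The formal axioms go through as in the proof of Theorem \ref{t:model}. Since $\cC$ is small with finite limits, $\cC^{op}$ is small with finite colimits, so by \cite{AR} the category $\Ind(\cC^{op})\cong\Pro(\cC)^{op}$ is locally presentable and every object of $\cC^{op}$ is $\omega$-presentable in it; hence $\Pro(\cC)$ is complete and cocomplete (axiom (1)), and the dual small object argument applied to the sets $\cF$ and $\cF\cap\cW$ yields the two required functorial factorizations (axiom (5)). Axiom (3) is automatic: $\mathbf{C}$ is defined by a left lifting property and $\mathbf{F}$ by a right lifting property, and such classes are closed under retracts. The lifting inclusions of axiom (4) and the asserted identity $\mathbf{C}\cap\mathbf{W}={}^{\perp}\cF$ are then deduced from the factorizations and the usual retract arguments exactly as in \cite[Theorem 4.8]{BaSc1}; this is where care is needed, and the point to verify is that in those arguments the two out of three property of $Lw^{\cong}(\cW)$ is invoked only (a) when verifying the two out of three axiom of the model structure — which an almost model category is not required to satisfy — and (b) in the two forms that survive in Definition \ref{d:almost_admiss}: closure of $Lw^{\cong}(\cW)$ under composition, used in particular to see that the right-hand factors produced by the small object argument (built from $\cF\cap\cW$ by base change and transfinite co-composition) again lie in $Lw^{\cong}(\cW)$; and the cancellation $g,\,g\circ f\in Lw^{\cong}(\cW)\Rightarrow f\in Lw^{\cong}(\cW)$, used for instance to see that the $\mathbf{C}$-part of a $(\mathbf{C},\mathbf{F}\cap\mathbf{W})$-factorization of a map already in $\mathbf{W}$ lies in $\mathbf{W}$. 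Axiom (2), closure of $\mathbf{W}=Lw^{\cong}(\cW)$ under retracts, I would take from the corresponding lemma of \cite{BaSc1} — a statement internal to pro-categories, independent of the model structure; alternatively, once axioms (1), (3), (4), (5) are in place it follows by the standard model-category argument (as for Lemma \ref{l:lifting}, cf.\ \cite[Lemma 1.1.10]{Hov}), which uses only the functorial $(\mathbf{C},\mathbf{F}\cap\mathbf{W})$-factorization, the retract-closure of $\mathbf{C}$, of $\mathbf{F}$, of $\mathbf{C}\cap\mathbf{W}={}^{\perp}\cF$ and of $\mathbf{F}\cap\mathbf{W}=\cC^{\perp}$, together with exactly parts (1) and (2) of almost pro-admissibility. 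With all five axioms established, $(\Pro(\cC),\mathbf{W},\mathbf{F},\mathbf{C})$ is an almost model category, and the "furthermore" $\mathbf{C}\cap\mathbf{W}={}^{\perp}\cF$ is then immediate from Lemma \ref{l:lifting}(1).

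The main obstacle, as signalled above, is the bookkeeping in the second paragraph: one must go through the proof of \cite[Theorem 4.8]{BaSc1} line by line and confirm that every appeal there to two out of three for $Lw^{\cong}(\cW)$, apart from the verification of the two out of three axiom itself, is of one of the two forms guaranteed by almost pro-admissibility — and in particular that the third instance, $f,\,g\circ f\in Lw^{\cong}(\cW)\Rightarrow g\in Lw^{\cong}(\cW)$, is never needed. Everything else in that proof is either purely formal (lifting properties, retracts, the small object argument) or belongs to the theory of model structures on pro-categories and makes no use of the two out of three property at all.
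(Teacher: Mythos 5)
Your proposal is correct and follows essentially the same route as the paper: the paper's proof likewise consists of re-running the proof of \cite[Theorem 4.8]{BaSc1} (with \cite[Proposition 3.17]{BaSc1} for functorial factorizations) and observing that, apart from the two out of three axiom itself, the only invocation of two out of three for $Lw^{\cong}(\cW)$ occurs in one lemma of that proof and uses precisely the two implications retained in Definition \ref{d:almost_admiss}, so almost pro-admissibility suffices. The only nitpick is your closing remark that the identity $\mathbf{C}\cap\mathbf{W}={}^{\perp}\cF$ is ``immediate from Lemma \ref{l:lifting}(1)'': that lemma only yields $\mathbf{C}\cap\mathbf{W}={}^{\perp}\mathbf{F}$, and identifying ${}^{\perp}\mathbf{F}$ with ${}^{\perp}\cF$ (i.e.\ that the set $\cF$ generates) is exactly what is established inside the \cite{BaSc1} argument, as you correctly say earlier in the same paragraph.
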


\begin{proof}
This is very much like the proof of Theorem \ref{t:model}, that is based on \cite[Theorem 4.8]{BaSc1}. Going over the proof of \cite[Theorem 4.8]{BaSc1} we find that we can show all the axioms of a model category for $\Pro(\cC)$, except the two out of three property for $Lw^{\cong}(\cW)$, using only the fact that $\cC$ is almost pro-admissible. (In fact, the only place where we use the fact that $Lw^{\cong}(\cW)$ satisfies the two out of three property is in Lemma 4.13, where we only use the portion of the two out of three property given in Definition \ref{d:almost_admiss}.)
\end{proof}

We can dualize the above:

\begin{define}\label{d:almost_admiss_dual}
A relative category $(\cC,\cW)$ will be called \emph{almost ind-admissible}, if $Lw^{\cong}(\cW)\subseteq  \Ind(\cC)^\to$ satisfies the following portion of the two out of three property:

For every pair of composable morphisms in $\Ind(\cC)$: $X\xrightarrow{f} Z\xrightarrow{g} Y $ we have:
\begin{enumerate}
\item If $f,g$ belong to $Lw^{\cong}(\cW)$ then $g\circ f\in Lw^{\cong}(\cW)$.
\item If $f,g\circ f$ belong to $Lw^{\cong}(\cW)$ then $g\in Lw^{\cong}(\cW)$.
\end{enumerate}
\end{define}

\begin{thm}\label{t:almost_model_dual}
Let $(\mcal{M},\mcal{W},\mcal{C})$ be a small almost ind-admissible weak cofibration category.
Then there exists an almost model category structure on $\Ind(\mcal{M})$ such that:
\begin{enumerate}
\item The weak equivalences are $\mathbf{W} := Lw^{\cong}(\mcal{W})$.
\item The fibrations are $\mathbf{F} := (\mcal{C}\cap \mcal{W})^{\perp} $.
\item The cofibrations are maps satisfying the left lifting property with respect to all acyclic fibrations.
\end{enumerate}
Furthermore, we have $\mathbf{F}  \cap \mathbf{W}= \cC^{\perp}.$
\end{thm}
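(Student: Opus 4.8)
The plan is to deduce Theorem~\ref{t:almost_model_dual} from Theorem~\ref{t:almost_model} by passing to opposite categories, exactly as Theorem~\ref{t:model_dual} is deduced from Theorem~\ref{t:model}. Recall that, by definition of a weak cofibration category, if $(\cM,\cW,\cC)$ is a small almost ind-admissible weak cofibration category then $(\cM^{op},\cW^{op},\cC^{op})$ is a small weak fibration category, and recall the natural isomorphism of categories $\Ind(\cM)^{op}\cong\Pro(\cM^{op})$.

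The first step is to record the elementary compatibilities of this duality with the constructions appearing in the statements: under $\Ind(\cM)^{op}\cong\Pro(\cM^{op})$, for a class $M\subseteq\Mor(\cM)$ the class $Lw^{\cong}(M)\subseteq\Mor(\Ind(\cM))$ corresponds to $Lw^{\cong}(M^{op})\subseteq\Mor(\Pro(\cM^{op}))$ (a morphism of $\Ind(\cM)$ is isomorphic to a level-wise $M$-natural transformation iff the opposite morphism of $\Pro(\cM^{op})$ is isomorphic to a level-wise $M^{op}$-natural transformation), and the operations $({}^{\perp}-)$ and $(-^{\perp})$ are interchanged, i.e.\ $({}^{\perp}N)^{op}=(N^{op})^{\perp}$ and $(N^{\perp})^{op}={}^{\perp}(N^{op})$; all of this is immediate from the definitions. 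Using these, I would then check that $(\cM^{op},\cW^{op})$ is almost pro-admissible in the sense of Definition~\ref{d:almost_admiss}. A composable pair $X\xrightarrow{f}Z\xrightarrow{g}Y$ in $\Ind(\cM)$ corresponds to the pair $Y\xrightarrow{g^{op}}Z\xrightarrow{f^{op}}X$ in $\Pro(\cM^{op})$, with composite $(g\circ f)^{op}=f^{op}\circ g^{op}$, and membership in $Lw^{\cong}(\cW)$ is preserved under this correspondence (with $\cW$ replaced by $\cW^{op}$). Hence condition (1) of Definition~\ref{d:almost_admiss_dual} for $(\cM,\cW)$ is precisely condition (1) of Definition~\ref{d:almost_admiss} for $(\cM^{op},\cW^{op})$, and condition (2) of Definition~\ref{d:almost_admiss_dual} (``$f,g\circ f\in Lw^{\cong}(\cW)\Rightarrow g\in Lw^{\cong}(\cW)$'') translates, for the pair $Y\xrightarrow{g^{op}}Z\xrightarrow{f^{op}}X$, into ``the second map and the composite lie in $Lw^{\cong}(\cW^{op})\Rightarrow$ the first map lies in $Lw^{\cong}(\cW^{op})$'', which is exactly condition (2) of Definition~\ref{d:almost_admiss}. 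So $(\cM^{op},\cW^{op},\cC^{op})$ is a small almost pro-admissible weak fibration category.

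Now I would apply Theorem~\ref{t:almost_model} to $(\cM^{op},\cW^{op},\cC^{op})$, obtaining an almost model structure on $\Pro(\cM^{op})$ whose weak equivalences are $Lw^{\cong}(\cW^{op})$, whose cofibrations are ${}^{\perp}(\cC^{op}\cap\cW^{op})$, whose fibrations are the maps with the right lifting property against acyclic cofibrations, and for which ${}^{\perp}(\cC^{op}\cap\cW^{op})\cap Lw^{\cong}(\cW^{op})={}^{\perp}(\cC^{op})$. Transporting this structure along $\Pro(\cM^{op})\cong\Ind(\cM)^{op}$ gives an almost model structure on $\Ind(\cM)$, since a collection of classes on a category is an almost model structure iff the collection of opposite classes on the opposite category is one, with the roles of cofibrations and fibrations exchanged (the list of axioms in the definition of an almost model category is manifestly self-dual in this sense). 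Applying the compatibilities from the second paragraph, the weak equivalences become $Lw^{\cong}(\cW)$, the fibrations become $(\cC\cap\cW)^{\perp}$ (as $(\cC^{op}\cap\cW^{op})^{op}=\cC\cap\cW$), the cofibrations become the maps with the left lifting property against acyclic fibrations, and the ``furthermore'' equality becomes $(\cC\cap\cW)^{\perp}\cap Lw^{\cong}(\cW)=\cC^{\perp}$, which is precisely the asserted structure.

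The argument is entirely formal, so I do not expect a genuine obstacle; the only point requiring care is the bookkeeping in the second paragraph, namely confirming that the asymmetric one-sided cancellation condition (2) of Definition~\ref{d:almost_admiss_dual} is truly dual to condition (2) of Definition~\ref{d:almost_admiss} --- the two definitions cancel on opposite sides exactly because reversing arrows reverses the order of composition, so this must be spelled out rather than asserted. An alternative, less economical route would be to reprove the dual of \cite[Theorem 4.8]{BaSc1} directly and observe, as in the proof of Theorem~\ref{t:almost_model}, that the only use of the two out of three property for $Lw^{\cong}(\cW)$ occurs at the dual of Lemma~4.13 of \cite{BaSc1}, and there it uses only the portion recorded in Definition~\ref{d:almost_admiss_dual}.
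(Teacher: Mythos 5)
Your proposal is correct and follows exactly the route the paper intends: the paper states Theorem \ref{t:almost_model_dual} as the formal dual of Theorem \ref{t:almost_model} (``We can dualize the above''), and your careful verification that almost ind-admissibility of $(\cM,\cW)$ is precisely almost pro-admissibility of $(\cM^{op},\cW^{op})$, followed by transport along $\Ind(\cM)^{op}\cong\Pro(\cM^{op})$, is just that duality spelled out. No gaps.
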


\section{Criteria for finite accessibility}\label{s:app}

In this last section we will state our main results of this paper, namely, a series of criteria for the finite accessibility of the category of weak equivalences. The criteria are stated in a decreasing level of generality (each criterion being an application or a special case of the previous one) but in an increasing level of convenience of verification and applicability. Our only example in this paper is the category of simplicial sets, which is an example of applying the third and last criterion. However, the authors are aware of an example where the second criterion applies but not the third. This is a non-standard model structure on the category of chain complexes of modules over a ring, and will be treated in a future paper.

\begin{define}\label{d:finite_access}
A category is called \emph{finitely accessible} if it has filtered colimits and there is a small set of finitely presentable objects that generates it under filtered colimits.
\end{define}

The following lemma explains the relevance of  Theorem \ref{t:model_dual} to the finite accessibility of the category of weak equivalences.

\begin{lem}\label{l:finite access}
Let $(\mcal{M},\mcal{W},\mcal{C})$ be a small ind-admissible weak cofibration category. Consider the model structure induced on $\Ind(\cM)$ by Theorem \ref{t:model_dual}. Then the full subcategory of $\Ind(\cM)^\to$, spanned by the class of weak equivalences, is finitely accessible (see Definition \ref{d:finite_access}).
\end{lem}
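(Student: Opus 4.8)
The plan is to identify the class of weak equivalences $\mathbf{W} = Lw^{\cong}(\cW)$ inside $\Ind(\cM)^\to$ with the ind-category of a small category, and then invoke the standard fact (see \cite{AR}) that any category of the form $\Ind(\cD)$ with $\cD$ small is finitely accessible. Concretely, recall from the introduction that $Lw^{\cong}(\cW)$ is the essential image of $\Ind(\cW)$ under the natural equivalence of categories $\Ind(\cM^\to)\xrightarrow{\ \sim\ }\Ind(\cM)^\to$, where $\cW$ is regarded as a full subcategory of $\cM^\to$. Since $\cM$ is small, so is $\cM^\to$, and hence so is its full subcategory $\cW$; thus $\Ind(\cW)$ is a legitimate ind-category of a small category.

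The key steps, in order, are as follows. First I would make precise the equivalence $\Ind(\cM^\to)\simeq \Ind(\cM)^\to$: this is the dual of the statement that $\Pro(\cC^\to)\simeq\Pro(\cC)^\to$, which holds for any category $\cC$ because the diagram category construction $(-)^\to$ commutes with (co)filtered (co)limits of categories in the appropriate sense, or more directly because a filtered diagram of arrows is the same datum as an arrow of filtered diagrams (one can take a common index category for source and target after passing to a cofinal reindexing). Second, under this equivalence, a filtered diagram $\{A_i\to B_i\}_{i\in I}$ in $\cM^\to$ lands in $\Ind(\cW)$ precisely when each $A_i\to B_i$ lies in $\cW$, i.e. precisely when the diagram is a filtered diagram in the full subcategory $\cW\subseteq\cM^\to$; and the essential image of such diagrams in $\Ind(\cM)^\to$ is by definition $Lw^{\cong}(\cW)=\mathbf{W}$. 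Hence $\mathbf{W}$, as a full subcategory of $\Ind(\cM)^\to$, is equivalent to $\Ind(\cW)$. Third, since $\cW$ is small, $\Ind(\cW)$ has all filtered colimits and is generated under filtered colimits by the (small) set of objects of $\cW$ viewed as simple ind-objects, each of which is finitely presentable in $\Ind(\cW)$; this is exactly the content of \cite{AR} on free completions under filtered colimits. Therefore $\mathbf{W}$ is finitely accessible, which is the assertion of the lemma.

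The main obstacle — really the only nontrivial point — is the verification that the full subcategory of $\Ind(\cM)^\to$ spanned by $\mathbf{W}$ is \emph{equivalent} to $\Ind(\cW)$ and not merely a full subcategory containing it: one must check that filtered colimits computed in $\Ind(\cM^\to)$ of diagrams valued in $\cW$ agree with filtered colimits computed in the full subcategory $\Ind(\cW)$, i.e. that $\Ind(\cW)\to\Ind(\cM^\to)$ is fully faithful with the expected essential image, and that the transported finitely presentable generators remain finitely presentable in $\mathbf{W}$. Full faithfulness of $\Ind(\cW)\to\Ind(\cM^\to)$ is automatic for any full subcategory inclusion (it is induced on ind-categories by a fully faithful functor, using the $\lim\colim$ formula for $\Hom$), and the identification of the essential image with $Lw^{\cong}(\cW)$ is precisely Definition \ref{def ess levelwise}(3) dualized together with Lemma \ref{every map natural} (every morphism in $\Ind(\cM)$, hence every object of $\Ind(\cM)^\to$, is isomorphic to one coming from a natural transformation). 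Once these identifications are in place, the finite accessibility is immediate from the general theory of \cite{AR}, so I would keep the write-up short: state the equivalence $\mathbf{W}\simeq\Ind(\cW)$, cite the dual of $\Pro(\cC^\to)\simeq\Pro(\cC)^\to$ and the remark in the introduction, note that $\cW$ is small because $\cM$ is, and conclude by \cite{AR}.
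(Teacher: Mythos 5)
Your proposal is correct and follows essentially the same route as the paper: identify $\mathbf{W}=Lw^{\cong}(\cW)$ with the essential image of $\Ind(\cW)$ under the equivalence $\Ind(\cM^{\to})\simeq\Ind(\cM)^{\to}$, note that $\Ind(\cW)\to\Ind(\cM^{\to})$ is fully faithful since $\cW\subseteq\cM^{\to}$ is full, and conclude finite accessibility from the fact that $\Ind(\cD)$ is finitely accessible for any small $\cD$ by \cite{AR}. The extra verifications you flag (full faithfulness, identification of the essential image via Lemma \ref{every map natural}) are exactly the points the paper's short proof implicitly relies on, so nothing is missing.
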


\begin{proof}
We need to show that $Lw^{\cong}(\cW)$ is of the form $\Ind(\cD)$ for some small category $\cD$. This follows from the observation that $Lw^{\cong}(\cW)$ is the essential image of $\Ind(\cW)$ under the natural equivalence $\Ind(\cC^{\to})\to\Ind(\cC)^{\to}$, where $\cW$ is considered as a full subcategory of $\cC^{\to}$. It then follows that $\Ind(\cW)$ is a full subcategory of $\Ind(\cC^{\to})$, and thus $\Ind(\cW)\simeq Lw^{\cong}(\cW)$.
\end{proof}

We now come to our first criterion:
\begin{prop}\label{l:admiss}
Let $(\underline{\mcal{M}},\underline{\mcal{W}},\underline{\mcal{F}},\underline{\mcal{C}})$ be an $\omega$-combinatorial model category. Let $\cM$ denote the full subcategory of $\underline{\cM}$ spanned by the finitely presentable objects. Let $\cW,\cC$ denote the classes of weak equivalences and cofibrations between objects in $\cM$, respectively. We denote by $LP$ the class of left proper maps in $(\cM,\cW)$ and by $RP$ the class of right proper maps in $(\cM,\cW)$.

We make the following further assumptions:
\begin{enumerate}
\item The category $\cM$ has finite limits.
\item $\Mor(\cM)=\cW\circ \cC$.
\item $\Mor(\cM)=\cW\circ LP$.
\item $\Mor(\cM)=RP\circ LP$.
\end{enumerate}

Then $(\cM,\cW,\cC)$ is an ind-admissible weak cofibration category and the induced model structure on $\Ind(\cM)$, given by Theorem \ref{t:model_dual}, coincides with $(\underline{\mcal{M}},\underline{\mcal{W}},\underline{\mcal{F}},\underline{\mcal{C}})$, under the natural equivalence $\underline{\cM}\simeq\Ind(\mcal{M})$.

In particular, it follows from Lemma \ref{l:finite access} that the full subcategory of $\underline{\cM}^\to$, spanned by the class of weak equivalences, is finitely accessible.
\end{prop}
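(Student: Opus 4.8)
The plan is to exhibit the given model category $\underline{\cM}$ as the $\Ind$-completion of the weak cofibration structure $(\cM,\cW,\cC)$: first produce an \emph{almost} model structure on $\Ind(\cM)$ via Theorem \ref{t:almost_model_dual}, and only afterwards promote it to a genuine model structure by identifying its three classes with $\underline{\cW},\underline{\cF},\underline{\cC}$. The two out of three property for $\underline{\cW}$ then yields, a posteriori, the ind-admissibility of $(\cM,\cW,\cC)$, after which Lemma \ref{l:finite access} finishes the job.

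First I would record the structural facts. Since $\underline{\cM}$ is $\omega$-combinatorial it is locally finitely presentable, so $\cM$ is essentially small (pass to a small skeleton, which does not affect the statement), is closed under finite colimits inside $\underline{\cM}$, and $\underline{\cM}\simeq\Ind(\cM)$ via the colimit functor, which restricts to the identity on $\cM$. Thus $\cM$ has finite colimits, and finite limits by hypothesis (1). The subcategories $\cW,\cC$ contain the isomorphisms; $\cW$ has two out of three because $\underline{\cW}$ does and $\cM$ is full; $\cC$ and $\cC\cap\cW$ are closed under cobase change in $\cM$ because $\underline{\cC}$ and $\underline{\cC}\cap\underline{\cW}$ are and finite colimits of finitely presentable objects are finitely presentable. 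With hypothesis (2), $\Mor(\cM)=\cW\circ\cC$, this makes $(\cM,\cW,\cC)$ a small weak cofibration category. Next, Proposition \ref{p:compose} in the $\Ind$ case gives almost ind-admissibility: clause (1) (using finite limits and colimits and hypothesis (4), $\Mor(\cM)=RP\circ LP$) shows $Lw^{\cong}(\cW)$ is closed under composition, and clause (3) (using finite colimits and hypothesis (3), $\Mor(\cM)=\cW\circ LP$) shows $f,g\circ f\in Lw^{\cong}(\cW)$ implies $g\in Lw^{\cong}(\cW)$; these are exactly the two clauses of Definition \ref{d:almost_admiss_dual}. Theorem \ref{t:almost_model_dual} then produces an almost model structure $(\Ind(\cM),\mathbf{W},\mathbf{F},\mathbf{C})$ with $\mathbf{W}=Lw^{\cong}(\cW)$, $\mathbf{F}=(\cC\cap\cW)^{\perp}$, $\mathbf{C}={}^{\perp}(\mathbf{F}\cap\mathbf{W})$, $\mathbf{F}\cap\mathbf{W}=\cC^{\perp}$, and (by Lemma \ref{l:lifting}) $\mathbf{C}\cap\mathbf{W}={}^{\perp}\mathbf{F}$.

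Working under $\Ind(\cM)\simeq\underline{\cM}$, I would then identify $\mathbf{F}$ and $\mathbf{C}$. By $\omega$-combinatoriality, choose generating cofibrations $I\subseteq\cC$ and generating acyclic cofibrations $J\subseteq\cC\cap\cW$ (all domains and codomains finitely presentable); since also $\cC\subseteq\underline{\cC}$ and $\cC\cap\cW\subseteq\underline{\cC}\cap\underline{\cW}$, the chains $\underline{\cF}\cap\underline{\cW}=\underline{\cC}^{\perp}\subseteq\cC^{\perp}\subseteq I^{\perp}=\underline{\cF}\cap\underline{\cW}$ and $\underline{\cF}=(\underline{\cC}\cap\underline{\cW})^{\perp}\subseteq(\cC\cap\cW)^{\perp}\subseteq J^{\perp}=\underline{\cF}$ give $\cC^{\perp}=\underline{\cF}\cap\underline{\cW}$ and $(\cC\cap\cW)^{\perp}=\underline{\cF}$. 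Hence $\mathbf{F}=\underline{\cF}$, $\mathbf{F}\cap\mathbf{W}=\underline{\cF}\cap\underline{\cW}$, $\mathbf{C}={}^{\perp}(\underline{\cF}\cap\underline{\cW})=\underline{\cC}$ and $\mathbf{C}\cap\mathbf{W}={}^{\perp}\underline{\cF}=\underline{\cC}\cap\underline{\cW}$, the last two by the lifting characterizations in the model category $\underline{\cM}$. The inclusion $\underline{\cW}\subseteq\mathbf{W}$ is then easy: factor $f\in\underline{\cW}$ as a cofibration followed by an acyclic fibration $f=p\circ i$; two out of three gives $i\in\underline{\cC}\cap\underline{\cW}=\mathbf{C}\cap\mathbf{W}$ and $p\in\underline{\cF}\cap\underline{\cW}=\mathbf{F}\cap\mathbf{W}$, so $f\in\mathbf{W}$ by closure under composition.

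The main obstacle is the reverse inclusion $\mathbf{W}\subseteq\underline{\cW}$, since the almost model structure is only known to satisfy the two clauses of Definition \ref{d:almost_admiss_dual}, not full two out of three; the point is to factor on the correct side. Given $f\in\mathbf{W}$, factor it as a trivial cofibration followed by a fibration $f=q\circ j$ with $j\in\underline{\cC}\cap\underline{\cW}$ and $q\in\underline{\cF}$; then $j\in\mathbf{C}\cap\mathbf{W}\subseteq\mathbf{W}$, and since $j,\,q\circ j=f\in\mathbf{W}$, clause (2) of Definition \ref{d:almost_admiss_dual} forces $q\in\mathbf{W}$; therefore $q\in\underline{\cF}\cap\mathbf{W}=\mathbf{F}\cap\mathbf{W}=\underline{\cF}\cap\underline{\cW}$, so $q\in\underline{\cW}$, and $f=q\circ j\in\underline{\cW}$ by two out of three in $\underline{\cM}$. (Factoring as a cofibration followed by a trivial fibration would require the unavailable clause.) Now $Lw^{\cong}(\cW)=\mathbf{W}=\underline{\cW}$ satisfies two out of three, so $(\cM,\cW,\cC)$ is ind-admissible; the model structure Theorem \ref{t:model_dual} puts on $\Ind(\cM)$ is built by the same recipe as $(\mathbf{W},\mathbf{F},\mathbf{C})$, hence equals $(\underline{\cW},\underline{\cF},\underline{\cC})$ and coincides with the original under $\Ind(\cM)\simeq\underline{\cM}$. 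Finally, Lemma \ref{l:finite access} applied to the ind-admissible weak cofibration category $(\cM,\cW,\cC)$ shows that the full subcategory of $\Ind(\cM)^{\to}\simeq\underline{\cM}^{\to}$ spanned by the weak equivalences is finitely accessible.
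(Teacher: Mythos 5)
Your proposal is correct and follows essentially the same route as the paper: verify $(\cM,\cW,\cC)$ is a small weak cofibration category, use Proposition \ref{p:compose} with hypotheses (1),(3),(4) to get almost ind-admissibility, invoke Theorem \ref{t:almost_model_dual}, identify $\mathbf{F}$, $\mathbf{F}\cap\mathbf{W}$, $\mathbf{C}$, $\mathbf{C}\cap\mathbf{W}$ with the underlined classes via $\omega$-combinatoriality and Lemma \ref{l:lifting}, and then deduce $\mathbf{W}=\underline{\cW}$ by factoring as a trivial cofibration followed by a fibration and applying the available clause of Definition \ref{d:almost_admiss_dual}. Your write-up merely supplies a bit more detail than the paper (the generating-set comparison of perpendicular classes and the explicit argument for $\underline{\cW}\subseteq\mathbf{W}$, which the paper leaves as ``similarly''), but the argument is the same.
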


\begin{proof}
Since $\underline{\cM}$ is locally finitely presentable (being $\omega$-combinatorial) it follows that its full subcategory $\cM$ is essentially small, closed under finite colimits, and we have a natural equivalence of categories $\Ind(\cM)\simeq\underline{\cM}$ given by taking colimits (see \cite{AR}).

It is now trivial to verify, using assumption 2 above, that $(\cM,\cW,\cC)$ is a weak cofibration category. Using assumptions 1,3 and 4 we get, by Proposition \ref{p:compose}, that $(\cM,\cW,\cC)$ is almost ind-admissible (see Definition \ref{d:almost_admiss_dual}). Thus, by Theorem \ref{t:almost_model_dual}, there exists an almost model category structure on $\underline{\cM}\simeq\Ind(\mcal{M})$ such that:
\begin{enumerate}
\item The weak equivalences are $\overline{\cW} := Lw^{\cong}(\mcal{W})$.
\item The fibrations are $\overline{\cF}:= (\mcal{C}\cap \mcal{W})^{\perp} $.
\end{enumerate}
Furthermore, we have
$\overline{\cF} \cap\overline{\cW}= \cC^{\perp}.$

Since the model category $(\underline{\cM},\underline{\cW},\underline{\cF},\underline{\cC})$ is $\omega$-combinatorial, we have that
$$\overline{\cF}\cap \overline{\cW}= \mcal{C}^{\perp}=\underline{\cF}\cap \underline{\cW},$$
$$\overline{\cF}:= (\mcal{C}\cap \mcal{W})^{\perp} =\underline{\cF}.$$
Thus, using Lemma \ref{l:lifting}, we also obtain
$$\overline{\cC}\cap \overline{\cW}= ^{\perp}\overline{\cF}=^{\perp}\underline{\cF}=\underline{\cC}\cap \underline{\cW},$$
$$\overline{\cC}:= ^{\perp}(\overline{\cF}\cap \overline{\cW}) =^{\perp}(\underline{\cF}\cap \underline{\cW}) =\underline{\cC}.$$

It is now easy to show that $\overline{\cW}=\underline{\cW}$: we will show that $\overline{\cW}\subseteq\underline{\cW}$, and the other direction can be shown similarly.

Let $f:X\to Y$ be an element in $\overline{\cW}$. We decompose $f$, in the almost model category  $(\underline{\mcal{M}},\overline{\mcal{W}},\overline{\mcal{F}},\overline{\mcal{C}})$,
into an acyclic cofibration followed by a fibration:
$$X\xrightarrow{h\in \overline{\mcal{C}}\cap\overline{\mcal{W}}} Z\xrightarrow{g\in\overline{\mcal{F}}}Y.$$
Since  the weak cofibration category $(\cM,\cW,\cC)$ is almost ind-admissible, we have that $g$ also belongs to $\overline{\mcal{W}}$. Thus we have
$$h\in \overline{\mcal{C}}\cap\overline{\mcal{W}}=
\underline{\mcal{C}}\cap\underline{\mcal{W}},$$
$$g\in \overline{\mcal{F}}\cap\overline{\mcal{W}}=
\underline{\mcal{F}}\cap\underline{\mcal{W}}.$$
It follows that $f\in \underline{\cW}$, because $\underline{\cW}$ is closed under composition.
\end{proof}

We now come to our second criterion for the finite accessibility of the category of weak equivalences.

\begin{thm}\label{l:admiss2}
Let $(\underline{\mcal{M}},\underline{\mcal{W}},\underline{\mcal{F}},\underline{\mcal{C}})$ be an $\omega$-combinatorial left proper model category. Let $\cM$ denote the full subcategory of $\underline{\cM}$ spanned by the finitely presentable objects. Let $\cW,\cC$ denote the classes of weak equivalences and cofibrations between objects in $\cM$, respectively.

Suppose we are given a cylinder object in $\cM$, that is, for every object $B$ of $\cM$ we are given a factorization in $\cM$ of the fold map $B\sqcup B\to B$ into a cofibration followed by a weak equivalence:
$$B\sqcup B\xrightarrow{(i_0,i_1)} I\otimes B\xrightarrow{p} B.$$
(Note that we are not assuming any simplicial structure; $I\otimes B$ is just a suggestive notation.)

We make the following further assumptions:
\begin{enumerate}
\item The category $\cM$ has finite limits.
\item Every object in $\cM$ is cofibrant.
\item For every morphism $f:A\to B$ in $\cM$ the map $B\coprod_{A}(I\otimes A)\to B$, induced by the commutative square
$$\xymatrix{A\ar[d]^{f}\ar[r]^{i_0} & I\otimes A\ar[d]^{f\circ p} \\
                 B \ar[r]^{=} & B,}$$
is a right proper map in $(\cM,\cW)$.
\end{enumerate}

Then $(\cM,\cW,\cC)$ is an ind-admissible weak cofibration category and the induced model structure on $\Ind(\cM)$, given by Theorem \ref{t:model_dual}, coincides with $(\underline{\mcal{M}},\underline{\mcal{W}},\underline{\mcal{F}},\underline{\mcal{C}})$, under the natural equivalence $\underline{\cM}\simeq\Ind(\mcal{M})$.

In particular, it follows from Lemma \ref{l:finite access} that the full subcategory of $\underline{\cM}^\to$, spanned by the class of weak equivalences, is finitely accessible.
\end{thm}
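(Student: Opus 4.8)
The plan is to deduce this theorem from our first criterion, Proposition~\ref{l:admiss}: I will show that the mapping cylinder built from the given cylinder data provides, for every morphism of $\cM$, a \emph{single} factorization $f = r\circ j$ that simultaneously witnesses hypotheses (2), (3) and (4) of that proposition, hypothesis (1) being assumption (1) here. Before that I would record the standard consequences of $\omega$-combinatoriality, exactly as in the proof of Proposition~\ref{l:admiss}: $\cM$ is essentially small, closed under finite colimits inside $\underline{\cM}$, and $\Ind(\cM)\simeq\underline{\cM}$. In particular finite colimits computed in $\cM$ agree with those computed in $\underline{\cM}$, and a morphism of $\cM$ lies in $\cW$ (resp. $\cC$) precisely when it does so in $\underline{\cM}$; this last point makes ``left/right proper in $(\cM,\cW)$'' compatible with the corresponding notions in $\underline{\cM}$.

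I would then check that for every $B$ in $\cM$ both structure maps $i_0,i_1\colon B\to I\otimes B$ are acyclic cofibrations: since every object is cofibrant, $\emptyset\to B$ is a cofibration, hence so is each inclusion $B\to B\sqcup B$ (a cobase change of $\emptyset\to B$); composing with the cofibration $B\sqcup B\to I\otimes B$ shows $i_k\in\cC$, and since $p\circ i_k=\mathrm{id}_B$ with $p\in\cW$, the two out of three property gives $i_k\in\cW$. Now, for $f\colon A\to B$ in $\cM$, form the pushout $\mathrm{Cyl}(f):=B\coprod_A(I\otimes A)$ of $B\xleftarrow{f}A\xrightarrow{i_0}I\otimes A$; it is finitely presentable, being a finite colimit of such. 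Write $s\colon B\to\mathrm{Cyl}(f)$, $u\colon I\otimes A\to\mathrm{Cyl}(f)$ for the structure maps, put $j:=u\circ i_1\colon A\to\mathrm{Cyl}(f)$, and let $r\colon\mathrm{Cyl}(f)\to B$ be the map of assumption (3) (the identity on $B$ and $f\circ p$ on $I\otimes A$), so that $r\circ j=f\circ p\circ i_1=f$.

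Next I would establish the two properties of this factorization. For $r$: the map $s$ is the cobase change of the acyclic cofibration $i_0$, hence is itself an acyclic cofibration, and $r\circ s=\mathrm{id}_B$, so two out of three forces $r\in\cW$; by assumption (3), $r$ is also right proper, i.e. $r\in RP$. For $j$: I would present $\mathrm{Cyl}(f)$ as the pushout of $B\sqcup A\xleftarrow{f\sqcup\mathrm{id}_A}A\sqcup A\xrightarrow{(i_0,i_1)}I\otimes A$ (one checks this pushout has the same universal property as $B\coprod_A(I\otimes A)$), so that $B\sqcup A\to\mathrm{Cyl}(f)$ is a cofibration, being the cobase change of the cofibration $(i_0,i_1)$; composing with the cofibration $A\to A\sqcup B$ (again a cobase change of $\emptyset\to B$, using that $B$ is cofibrant) and identifying the composite with $j$ shows $j\in\cC$. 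Since $\underline{\cM}$ is left proper, every cofibration is a left proper map (Example~\ref{e:proper_map}), so $j\in LP$ as well.

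Putting this together, every $f$ in $\cM$ factors as $f=r\circ j$ with $j\in\cC\subseteq LP$ and $r\in\cW\cap RP$; this simultaneously yields $\Mor(\cM)=\cW\circ\cC$, $\Mor(\cM)=\cW\circ LP$ and $\Mor(\cM)=RP\circ LP$, which together with assumption~(1) are exactly hypotheses (1)--(4) of Proposition~\ref{l:admiss}. That proposition then gives that $(\cM,\cW,\cC)$ is an ind-admissible weak cofibration category whose associated model structure on $\Ind(\cM)\simeq\underline{\cM}$ is $(\underline{\cM},\underline{\cW},\underline{\cF},\underline{\cC})$, and the finite accessibility of the weak equivalences follows from Lemma~\ref{l:finite access}. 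The step I expect to be the main obstacle is verifying that $j$ is a cofibration: one must produce the correct presentation of $\mathrm{Cyl}(f)$ as a pushout along a cofibration and confirm that the composite $A\to A\sqcup B\to\mathrm{Cyl}(f)$ genuinely equals $j$; more generally, throughout the argument one must be careful that the finite colimits and the proper-map conditions computed in $\cM$ coincide with those in $\underline{\cM}$.
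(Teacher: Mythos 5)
Your proposal is correct and follows essentially the same route as the paper: it reduces to Proposition~\ref{l:admiss} by producing the mapping cylinder factorization $f = q\circ i$ (your $r\circ j$), showing the cylinder inclusions are acyclic cofibrations, that $i$ is a cofibration (hence left proper by left properness of $\underline{\cM}$) via the same two pushout presentations, and that $q$ is a weak equivalence (retraction of the acyclic cofibration $B\to C(f)$) and right proper by assumption (3), this single factorization witnessing all three hypotheses $\Mor(\cM)=\cW\circ\cC$, $\cW\circ LP$ and $RP\circ LP$ at once. The details you flag as potential obstacles (the identification of the composite $A\to B\sqcup A\to C(f)$ with $j$, and the compatibility of finite colimits and properness between $\cM$ and $\underline{\cM}$) are handled in the paper exactly as you propose.
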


\begin{proof}
We will verify that all the conditions of Proposition \ref{l:admiss} are satisfied. We only need to check the existence of factorizations of the form:
\begin{enumerate}
\item $\Mor(\cM)=\cW\circ \cC$.
\item $\Mor(\cM)=\cW\circ LP$.
\item $\Mor(\cM)=RP\circ LP$.
\end{enumerate}
All the factorizations above will be given by the same factorization which we now describe. This is just the mapping cylinder factorization relative to our given cylinder object for $\cM$.

It is not hard to show that for any $B\in\cM$ the maps $i_0,i_1:B\to I\otimes B$ are acyclic cofibrations.

Let $f:A\to B$ be a morphism in $\cM$. We define the mapping cylinder of $f$ to be the push out
$$\xymatrix{A\ar[r]^{i_0}\ar[d]^f & I\otimes A\ar[d] \\
             B \ar[r] & C(f).}$$

We define a morphism $q:C(f)=B\coprod_{A}(I\otimes A)\to B$ to be the one induced by the commutative square
$$\xymatrix{A\ar[d]^{f}\ar[r]^{i_0} & I\otimes A\ar[d]^{f\circ p} \\
                 B \ar[r]^{=} & B.}$$
We define a morphism $i:A\to C(f)=B\coprod_{A}(I\otimes A)$ to be the composition
$${A\xrightarrow{i_1}  I\otimes A \xrightarrow{}  C(f).}$$
Clearly $f=qi$, and we call this the \emph{mapping cylinder factorization}.

The map $q$ is a left inverse to $j$, defined by the mapping cylinder push out square
$$\xymatrix{A\ar[r]^{i_0}\ar[d]^f & I\otimes A\ar[d] \\
             B \ar[r]^j & C(f).}$$
Since $i_0$ is an acyclic cofibration, we get that $j$ is also an acyclic cofibration and, in particular, $q$ is a weak equivalence.

The map $i$ is a cofibration, being a composite of two cofibrations
$$\xymatrix{A\ar[r] & B\coprod A\ar[r]^{(j,i)}& B\coprod_{A}(I\otimes A)}.$$
These maps are cofibrations because of the following push out squares:
$$\xymatrix{\phi\ar[r]\ar[d] & B\ar[d] & A\coprod A \ar[r]^{(i_0,i_1)} \ar[d]^{f\coprod id} & I\otimes A \ar[d] \\
           A\ar[r] &  B\coprod A, & B\coprod A\ar[r] &B\coprod_{A}(I\otimes A)  .}$$

Since the map $i$ is a cofibration and $\underline{\cM}$ is left proper, we get that the map $i$ is also left proper. By assumption 3, $q$ is right proper.
\end{proof}

We now come to our third and last criterion.
\begin{thm}\label{l:admiss3}
Let $(\underline{\mcal{M}},\underline{\mcal{W}},\underline{\mcal{F}},\underline{\mcal{C}})$ be an $\omega$-combinatorial left proper model category. Let $\cM$ denote the full subcategory of $\underline{\cM}$ spanned by the finitely presentable objects. Assume that the category $\cM$ has finite limits and let $*$ denote the terminal object in $\cM$. Let $\cW,\cC$ denote the classes of weak equivalences and cofibrations between objects in $\cM$, respectively.

Suppose we are given a factorization in $\cM$ of the fold map $*\sqcup *\to *$ into a cofibration followed by a weak equivalence:
$$*\sqcup *\xrightarrow{} I\xrightarrow{} *.$$

We make the following further assumptions:
\begin{enumerate}
\item For every morphism $Y\to B$ in $\cM$, the functor
$$Y\times_B(-):\cM_{/B}\to\cM$$
commutes with finite colimits.
\item Every object in $\cM$ is cofibrant.
\item For every object $B$ in $\cM$ the functor
$$B\times(-):\cM\to\cM$$ preserves cofibrations and weak equivalences. \end{enumerate}

Then $(\cM,\cW,\cC)$ is an ind-admissible weak cofibration category and the induced model structure on $\Ind(\cM)$, given by Theorem \ref{t:model_dual}, coincides with $(\underline{\mcal{M}},\underline{\mcal{W}},\underline{\mcal{F}},\underline{\mcal{C}})$, under the natural equivalence $\underline{\cM}\simeq\Ind(\mcal{M})$.

In particular, it follows from Lemma \ref{l:finite access} that the full subcategory of $\underline{\cM}^\to$, spanned by the class of weak equivalences, is finitely accessible.
\end{thm}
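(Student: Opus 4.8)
The strategy is to reduce to Theorem \ref{l:admiss2}, i.e. to produce a cylinder object on $\cM$ satisfying conditions (1)--(3) of that theorem, from the weaker data and hypotheses given here. The natural candidate is $I\otimes B := I\times B$, where $I$ is the given factorization of $*\sqcup *\to *$; the structure maps are $i_0,i_1: B\cong *\times B\sqcup *\times B = (*\sqcup *)\times B\to I\times B$ (using that $B\times(-)$ preserves coproducts, which follows from hypothesis (1) applied with $B$ terminal, or simply since $\cM$ has finite colimits and $\cM$ is essentially small with the inherited structure) and $p: I\times B\to *\times B\cong B$. I would first check this is a legitimate cylinder object in $\cM$: the map $(i_0,i_1): B\sqcup B = (*\sqcup *)\times B\to I\times B$ is a cofibration by hypothesis (3) (since $B\times(-)$ preserves cofibrations and $*\sqcup *\to I$ is a cofibration), and $p: I\times B\to B$ is a weak equivalence by hypothesis (3) again (since $B\times(-)$ preserves weak equivalences and $I\to *$ is a weak equivalence). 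Note also $\cM$ has finite limits by assumption, giving condition (1) of Theorem \ref{l:admiss2}, and condition (2), every object cofibrant, is exactly hypothesis (2) here.

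The remaining, and main, task is to verify condition (3) of Theorem \ref{l:admiss2}: for every $f:A\to B$ in $\cM$, the canonical map $q: B\coprod_A (I\times A)\to B$ is right proper in $(\cM,\cW)$. The key idea is to identify this map as a base change of a map of the form $B\times(-)$ applied to something, so that hypothesis (1) (stability of finite colimits under base change along $Y\to B$) lets us rewrite the relevant pullback squares, and hypothesis (3) (that $B\times(-)$ preserves weak equivalences) gives the conclusion. Concretely: the pushout $B\coprod_A (I\times A)$ receives the map $f\times p$-type data; one computes, using that $B\times_B(-)\cong \mathrm{id}$ and that $B\times(-) \cong B\times_*(-)$... the cleaner route is to observe that $B\coprod_A(I\times A)$ is the pushout of $A\xrightarrow{i_0} I\times A$ along $f$, and since $I\times A = (I\times B)\times_B A$ — because $B\times(-)$ commutes with the pullback $B\times_* A\cong B\times A$ restricted over... — one gets that $q$ is obtained from the map $I\times B\to B$ (which is $p$, a weak equivalence, hence right proper) by a pushout-then-compare construction. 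I would then use hypothesis (1) to commute the pullback defining "right proper" past the pushout defining $C(f)$, reducing right-properness of $q$ to right-properness of $p:I\times B\to B$. The latter holds because $p$ is a weak equivalence with a section $i_0$ that is an acyclic cofibration, and more directly because in any pullback square over $p$, the top map is $(B\times(-))$ applied — via hypothesis (1) — to a weak equivalence, hence a weak equivalence by hypothesis (3).

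The step I expect to be the main obstacle is precisely this identification: showing that the pullback, along an arbitrary map $C\to B$, of the pushout square defining $C(f)=B\coprod_A(I\times A)$ is again a pushout square of the same shape with $A$, $B$ replaced by $C\times_B A$, $C$ and $I\times A$ replaced by $I\times(C\times_B A)$. The first part — that pullback along $C\to B$ preserves this pushout — is exactly what hypothesis (1) buys us. The second part — that $(C\times_B A)$ together with the pulled-back $I\times A$ assemble correctly, i.e. $(C\to B)^*(I\times A) = I\times (C\times_B A)$ — requires knowing $I\times(-)$ itself is compatible with these base changes, which one extracts from hypothesis (1) by writing $I\times(-) = (I\times B)\times_B(-)$ on $\cM_{/B}$ and using associativity/pasting of pullbacks. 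Once the pulled-back square is recognized as a mapping-cylinder-type pushout for the map $C\times_B A\to C$, right-properness of $q$ follows by the same computation that showed $q$ itself is a composite involving a weak equivalence $p$ (now $I\times(C\times_B A)\to C\times_B A$, a weak equivalence by hypothesis (3)), and two-out-of-three in $\cW\subseteq\cM$ finishes it. Then Theorem \ref{l:admiss2} applies verbatim and yields the finite accessibility conclusion via Lemma \ref{l:finite access}.
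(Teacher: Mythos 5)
Your proposal is correct and follows essentially the same route as the paper: take $I\otimes B:=I\times B$ (a cylinder by hypothesis (3)), and verify condition (3) of Theorem \ref{l:admiss2} by using hypothesis (1) to commute the pullback along $X\to B$ past the mapping-cylinder pushout, identifying $C(f)\times_B X$ with the mapping cylinder of $A\times_B X\to X$, whose projection is a weak equivalence by hypothesis (3), so two out of three concludes. The only cosmetic difference is that the paper argues directly with this identification rather than via ``right properness of $p$,'' which matches the argument you settle on in your final paragraph.
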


\begin{proof}
We will verify that all the conditions of Theorem \ref{l:admiss2} are satisfied. For every object $B$ of $\cM$ we have that the induced diagram
$$B\sqcup B\cong (*\times B)\sqcup(*\times B)\cong (*\sqcup *)\times B \xrightarrow{} I\times B\xrightarrow{}*\times B\cong B$$
is a factorization in $\cM$ of the fold map $B\sqcup B\to B$ into a cofibration followed by a weak equivalence. (Note that here $\times$ denotes the actual categorical product and is not just a suggestive notation.)

Thus, we only need to check that for every morphism $f:A\to B$ in $\cM$ the map $q:B\coprod_{A}(I\times A)\to B$, induced by the commutative square
$$\xymatrix{A\ar[d]^{f}\ar[r]^{i_0} & I\times A\ar[d]^{f\circ p} \\
                 B \ar[r]^{=} & B,}$$
is a right proper map in $(\cM,\cW)$.

We will use the same notation as in the proof of Theorem \ref{l:admiss2}, regarding the mapping cylinder factorization.

Let
\[
\xymatrix{C(f)\times_B X\ar[d]^j\ar[r] & X\ar[d]^i\\
C(f)\ar[r]^q & B}
\]
be a pull back square in $\cM$ such that $i$ is a weak equivalence. We need to show that $j$ is a weak equivalence. Using condition 1 we get natural isomorphisms:
$$C(f)\times_B X=(B\coprod_{A}(I\times A))\times_B X\cong (B\times_B X)\coprod_{A\times_B X} ((I\times A)\times_B X)\cong$$
$$\cong(X\coprod_{A\times_B X} (I\times (A\times_B X))=C(k),$$
where $k:A\times_B X\to X$ is the natural map. By condition 3 and the proof of  Theorem \ref{l:admiss2}, we get that the natural map $C(k)\cong C(f)\times_B X\to X$ is a weak equivalence. By the two out of three property, we get that $j$ is also a weak equivalence.
\end{proof}

We now turn to our main example:
\begin{thm}\label{l:S_f_admiss}
Let $\cS$ denote the category of simplicial sets with its standard model structure. Let $\cS_f$ denote the full subcategory of $\cS$ spanned by the finitely presentable objects. Let $\cW,\cC$ denote the classes of weak equivalences and cofibrations between objects in $\cS_f$, respectively.

Then $(\cS_f,\cW,\cC)$ is an ind-admissible weak cofibration category and the induced model structure on $\Ind(\cS_f)$, given by Theorem \ref{t:model_dual}, coincides with the standard model structure on $\cS$, under the natural equivalence ${\cS}\simeq\Ind(\mcal{S}_f)$.

In particular, it follows from Lemma \ref{l:finite access} that the full subcategory of ${\cS}^\to$, spanned by the class of weak equivalences, is finitely accessible.
\end{thm}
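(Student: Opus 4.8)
The plan is to apply the third criterion, Theorem~\ref{l:admiss3}, to $\underline{\cM}=\cS$ with its standard (Kan--Quillen) model structure, so that $\cM=\cS_f$. It is classical that $\cS$ is an $\omega$-combinatorial left proper model category: the cofibrations are the monomorphisms, every object is cofibrant (so left properness is automatic), and $\omega$-combinatoriality holds because the usual generating (acyclic) cofibrations have finite domains and codomains. It is equally classical that $\cS_f$, the full subcategory of finitely presentable objects, consists exactly of the simplicial sets with only finitely many nondegenerate simplices, and that $\cS\simeq\Ind(\cS_f)$. So the real content is to verify the hypotheses of Theorem~\ref{l:admiss3}: that $\cS_f$ has finite limits, that there is a factorization of $*\sqcup *\to *$ into a cofibration followed by a weak equivalence in $\cS_f$, and the three numbered assumptions.

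First I would record that $\cS_f$ is closed in $\cS$ under finite limits, so that it inherits finite limits: a finite product of finite simplicial sets is finite (in each dimension the nondegenerate simplices of $X\times Y$ are controlled, via the Eilenberg--Zilber / shuffle description, by the finitely many nondegenerate simplices of $X$ and $Y$), and an equalizer of maps between finite simplicial sets is a simplicial subset of a finite one, hence finite. For the interval, take $*\sqcup *=\partial\Delta^1\hookrightarrow\Delta^1\to\Delta^0=*$: the first map is a monomorphism, hence a cofibration, the second is a weak equivalence, and all three objects are finite. Assumption~2 of Theorem~\ref{l:admiss3}, that every object of $\cS_f$ is cofibrant, is immediate since every simplicial set is cofibrant.

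For assumption~1, I would use that $\cS$ is a presheaf topos, hence locally cartesian closed, so every base-change functor has a right adjoint; consequently, for any $Y\to B$, the functor $Y\times_B(-)\colon\cS_{/B}\to\cS$ (the composite of pullback along $Y\to B$ with the forgetful functor) preserves all colimits. Restricting to $B,Y\in\cS_f$, one checks that this functor sends finite objects over a finite base to finite objects --- again from the explicit simplicial-level description of fibre products, since a nondegenerate simplex of $X\times_B Y$ projects to a pair of simplices of $X$ and $Y$ --- so the restricted functor $Y\times_B(-)\colon(\cS_f)_{/B}\to\cS_f$ is well-defined and commutes with finite colimits. For assumption~3, the functor $B\times(-)$ preserves monomorphisms (a product of monos is a mono), preserves finiteness by the product computation above, and preserves weak equivalences: this is the standard fact that the product of a weak equivalence of simplicial sets with an arbitrary simplicial set is a weak equivalence, which one sees by passing to geometric realizations using $|B\times X|\cong|B|\times|X|$ and the corresponding statement for spaces (or from the simplicial-model-category structure of $\cS$).

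With all hypotheses verified, Theorem~\ref{l:admiss3} gives that $(\cS_f,\cW,\cC)$ is an ind-admissible weak cofibration category and that the model structure induced on $\Ind(\cS_f)\simeq\cS$ via Theorem~\ref{t:model_dual} is the standard one; the finite accessibility of the class of weak equivalences then follows from Lemma~\ref{l:finite access}. The only genuinely delicate point is assumption~1 --- ensuring that base change commutes with finite colimits \emph{inside} $\cS_f$, not merely in $\cS$ --- which is where the concrete bookkeeping of nondegenerate simplices is needed; everything else is either model-category folklore about $\cS$ or a direct appeal to the earlier criteria.
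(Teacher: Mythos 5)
Your proposal is correct and follows essentially the same route as the paper: both verify the hypotheses of Theorem~\ref{l:admiss3} for $\cS$, using the closure of $\cS_f$ under finite limits, the interval $\partial\Delta^1\hookrightarrow\Delta^1\to\Delta^0$, cofibrancy of all objects, the fact that base change in the topos $\cS$ preserves colimits, and the preservation of cofibrations and weak equivalences by $B\times(-)$. The only differences are cosmetic (you check finiteness of products/pullbacks by Eilenberg--Zilber bookkeeping where the paper decomposes finite simplicial sets as finite colimits of simplices, and you offer geometric realization as an alternative to the left Quillen argument for condition~3), so there is nothing to correct.
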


\begin{proof}
We will verify that all the conditions of Theorem \ref{l:admiss3} are satisfied. The model category $\cS$ is $\omega$-combinatorial and left proper.

We first sketch a proof showing that the subcategory $\cS_f$ of $\cS$ is closed under finite limits.

Let $X$ be a finite simplicial set. It is not hard to verify that there exists a finite diagram $F:D\to \{\Delta^0,\Delta^1,\Delta^2,...\}$ such that
$$X\cong colim_{D}F.$$
We now note the following facts:
\begin{enumerate}
\item In the category $\cS$, pull backs commute with colimits.
\item For all $n,m\geq 0$, $\Delta^n\times\Delta^m$ belongs to $\cS_f$ (by direct computation).
\item A sub-simplicial set of a finite simplicial set is also finite.
\item The colimit in $\cS$, of a finite diagram in $\cS_f$, belongs to $\cS_f$.
\end{enumerate}
Using these facts it is not hard to check that the pull back (in $\cS$) of objects in $\cS_f$ belongs to $\cS_f$. Since the terminal object in $\cS$ also belongs to $\cS_f$, it follows that the subcategory $\cS_f$ of $\cS$ is closed under finite limits.

In particular, this shows that $\cS_f$ admits finite limits and they can be calculated in $\cS$. This also gives condition 1 of Theorem \ref{l:admiss3} (as this condition is known to hold in $\cS$).

Clearly every object in $\cS_f$ is cofibrant, so condition 2 is satisfied.

Let $B$ be an object in $\cS_f$. Since $B$ is cofibrant and $\cS$ is a simplicial model category, we get that the functor
$$B\times(-):\cS\to\cS$$ is a left Quillen functor and thus preserves cofibrations and weak equivalences between cofibrant objects. Since every object in $\cS_f$ is cofibrant, we get that
$$B\times(-):\cS_f\to\cS_f$$
preserves cofibrations and weak equivalences. This gives condition 3.

Finally, we may take the factorization of the fold map:
$$*\sqcup *\xrightarrow{} I\xrightarrow{} *,$$
to be
$\Delta^{\{0\}}\sqcup \Delta^{\{1\}}\xrightarrow{} \Delta^1\xrightarrow{} \Delta^0.$
\end{proof}


\begin{rem}\label{r:fib}
Let $f:X\to Y$ be a morphism in $\cS_f$. In the proof of Theorem \ref{l:admiss2} we considered the mapping cylinder factorization of $f$: $X\xrightarrow{h} C(f)\xrightarrow{g} Y $. We showed that $g$ is right proper. Note that $g$ is not, in general, a fibration in $\cS$. Consider the map $f:\Delta^n\to \Delta^0$ ($n\geq 0$). Then the mapping cylinder factorization of $f$ is just $\Delta^{\{1,...,n+1\}}\to \Delta^{n+1}\to \Delta^0$. But $\Delta^{n+1}\to \Delta^0$ is not a Kan fibration, since $\Delta^{n+1}$ is not a Kan complex. Thus we see that we are using the extra generalization provided by Proposition \ref{p:compose} over Isaksen's results (in \cite[Lemmas 3.5 and 3.6]{Isa}).
\end{rem}

\section{Appendix: Relation to the work of Raptis and Rosick\'y}\label{s:cosmall}

In this paper we proved theorems giving sufficient conditions for the finite accessibility of the category of weak equivalences in combinatorial model categories. Our main application was to the standard model structure on the category of simplicial sets, deducing the finite accessibility of its class of weak equivalences. As mentioned in the introduction, the same result on simplicial sets was also proved in \cite{RaRo}, using different methods. In this appendix we explain a possible connection between the two approaches.

An important ingredient is the proof of \cite{RaRo} is a generalization of Quillen's small-object argument (called the fat small-object argument). Our proof is based mainly on Theorem \ref{t:model_dual}, describing a construction of a model structure on the ind-category of a small weak cofibration category. Theorem \ref{t:model_dual} was not proved directly, but was deduced, by duality, from Theorem \ref{t:model}. The main technical tool in the proof of Theorem \ref{t:model} is a certain factorization proposition, namely, \cite[Proposition 3.17]{BaSc1}. The main purpose of this appendix is to prove Proposition \ref{c:trans} which connects the notion of a relative cell complex, appearing in Quillen's small object argument, and the notion of an essentially cospecial map, appearing in Proposition \ref{p:factor_gen_dual} (which is the dual version of \cite[Proposition 3.17]{BaSc1}, and which is used in proving Theorem \ref{t:model_dual}). This will hopefully shade some light as to possible connections between the approach taken in this paper, and that of Raptis and Rosick\'y.

As we explain below, Proposition \ref{c:trans} solves a conjecture of Isaksen. We end the appendix with a non trivial application of Proposition \ref{c:trans} to finite simplicial sets.

\begin{define}
Let $T$ be a poset. Then we view $T$ as a category which has a single morphism $u\to v$ iff $u\leq v$. Note that this convention is the opposite of that used in \cite{BaSc1}.
\end{define}
Thus, a poset $T$ is filtered (see Definition \ref{d:filtered}) iff $T$ is non-empty, and for every $a,b$ in $T$ there exists an element $c$ in $T$ such that $c\geq a,b$. A filtered poeset will also be called \emph{directed}.

\begin{define}\label{def CDS}
A cofinite poset is a poset $T$ such that for every element $x$ in $T$ the set $T_x:=\{z\in T| z \leq x\}$ is finite.
\end{define}

\begin{define}\label{def cospecial}
Let $\mcal{C}$ be a category with finite colimits, $N$ a class of morphisms in $\mcal{C}$, $I$ a cofinite poset (see Definition \ref{def CDS}) and $F:X\to Y$ a morphism in $\mcal{C}^I$. Then the map  $F$ will be called a \emph{cospecial} $N$-\emph{map}, if the natural map
$$X_t\coprod_{\colim_{s<t} X_s} \colim_{s<t} Y_s \to Y_t  $$
is in $N$, for every $t$ in $ I$. We will denote this by $F\in coSp(N)$.
\end{define}

\begin{define}\label{def ess cospecial}
Let $\mcal{C}$ be a category and $N$ a class of morphisms in $\mcal{C}$.
\begin{enumerate}
\item We denote by $R(N)$ the class of morphisms in $\mcal{C}$ that are retracts of morphisms in $N$. Note that $R(R(N))=R(N)$.
\item If $\cC$ has finite colimits, we denote by $coSp^{\cong}(N)$ the class of morphisms in $\Ind(\mcal{C})$ that are \textbf{isomorphic} to a morphism that comes from a natural transformation which is a cospecial $N$-map (see Definition \ref{def cospecial}). Maps in $coSp^{\cong}(N)$ are called \emph{essentially cospecial} $N$-\emph{maps}.
\end{enumerate}
\end{define}

In the following we bring a few results from several papers. These results were originally stated in the language of pro-categories. For the convenience of the reader we bring them in their dual formulation, which we need here.




\begin{prop}[{\cite[Proposition 2.19]{BaSc1}}]\label{forF_sp_is_lw}
Let $\mcal{C}$ be a category with finite colimits, and $\mcal{N} \subseteq \mcal{C}$ a subcategory that is closed under cobase change, and contains all the isomorphisms. Let $F:X\to Y$ be a natural transformation between diagrams in $\mcal{C}$, which is a cospecial $\mcal{N}$-map. Then $F$ is a levelwise $\mcal{N}$-map.
\end{prop}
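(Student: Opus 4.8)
The plan is to reduce the statement to a claim about \emph{finite} posets and prove that claim by induction on cardinality, exploiting the fact that cofiniteness of the indexing poset makes the relevant ``latching'' diagrams finite.

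Recall that by Definition \ref{def cospecial} the map $F\colon X\to Y$ is a morphism of $I$-diagrams for a cofinite poset $I$, so that for each $t\in I$ the set $\partial t:=\{s\in I\mid s<t\}$ is finite. Write $L_t:=\colim_{s<t}X_s$, $L_t':=\colim_{s<t}Y_s$ and $Z_t:=X_t\coprod_{L_t}L_t'$; the defining property of a cospecial $\mcal{N}$-map is that $Z_t\to Y_t$ lies in $\mcal{N}$ for every $t$. Since $X_t\to Z_t$ is a cobase change of $L_t\to L_t'$ and $X_t\to Y_t$ factors as $X_t\to Z_t\to Y_t$, and since $\mcal{N}$ is a subcategory closed under cobase change, it suffices to prove the following claim: \textbf{($\ast$)} \emph{for every cospecial $\mcal{N}$-map $G\colon U\to V$ over a finite poset $P$, the induced map $\colim_P U\to\colim_P V$ lies in $\mcal{N}$}. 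Indeed, granting ($\ast$), for $t\in I$ the restriction of $F$ to $\partial t$ is again a cospecial $\mcal{N}$-map over the finite poset $\partial t$ (the latching diagrams are computed identically in $I$ and in $\partial t$), so ($\ast$) gives $L_t\to L_t'\in\mcal{N}$, whence $X_t\to Z_t\in\mcal{N}$ and then $X_t\to Y_t\in\mcal{N}$, which is what we want.

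To prove ($\ast$) I would induct on $|P|$, the case $|P|=0$ being trivial since then both colimits are the initial object and $\mcal{N}$ contains all isomorphisms. For the inductive step pick a maximal element $t_0\in P$ and put $P':=P\setminus\{t_0\}$, a down-closed subposet with $\partial t_0\subseteq P'$. Checking universal properties gives the pushout presentations $\colim_P U\cong\colim_{P'}U\coprod_{\colim_{\partial t_0}U}U_{t_0}$ and $\colim_P V\cong\colim_{P'}V\coprod_{\colim_{\partial t_0}V}V_{t_0}$. I would then factor the map $\colim_P U\to\colim_P V$ as a composite of three maps: first $\colim_P U\to\colim_{P'}V\coprod_{\colim_{\partial t_0}U}U_{t_0}$, which by pushout pasting is a cobase change of $\colim_{P'}U\to\colim_{P'}V$ (in $\mcal{N}$ by the induction hypothesis); then an isomorphism onto $\colim_{P'}V\coprod_{\colim_{\partial t_0}V}Z_{t_0}$, obtained by forming the iterated pushout defining $Z_{t_0}$ first and using naturality of $G$ over $\partial t_0$; and finally a map to $\colim_P V$ which by pushout pasting is a cobase change of the cospecial map $Z_{t_0}\to V_{t_0}$ (in $\mcal{N}$). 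Hence the composite lies in $\mcal{N}$, completing the induction.

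I expect the only real work to be in this last factorization: identifying the map of colimits, after deleting a maximal element, as a finite composite of cobase changes of morphisms already known to be in $\mcal{N}$, which requires organizing several iterated pushouts via the pasting lemma. Note that the finiteness of $P$ (equivalently, the cofiniteness of $I$) is essential here: $\mcal{N}$ is only assumed closed under cobase change, not under transfinite composition, so the argument does not extend to colimits over arbitrary down-closed subposets.
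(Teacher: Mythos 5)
Your argument is correct: the reduction to the finite-poset claim $(\ast)$, the fact that restricting to the down-closed finite subsets $\partial t$ and $P'$ preserves cospeciality, and the inductive step removing a maximal element with the three-step factorization (cobase change of $\colim_{P'}U\to\colim_{P'}V$, an isomorphism via naturality and pushout pasting, then a cobase change of $Z_{t_0}\to V_{t_0}$) all check out, and the remaining verification that the composite is the canonical map is indeed routine. Note that the present paper does not prove this proposition but quotes it from \cite[Proposition 2.19]{BaSc1}; the proof there is (dually) essentially this same induction along the cofinite indexing poset, so your route matches the cited argument in spirit.
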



We now state our factorization proposition which is the main technical tool in the proof of Theorem \ref{t:model_dual}.

\begin{prop}[{\cite[Proposition 3.17]{BaSc1}}]\label{p:factor_gen_dual}
Let $\cC$ be a category that has finite colimits, $\cN\subseteq\cC$ a subcategory that is closed under cobase change, and $M\subseteq\Mor(\cC)$ an arbitrary class of morphisms such that $M\circ\cN=\Mor(\cC)$.

Then every morphism $f:X\to Y$ in $\Ind(\cC)$ can be functorially factored as $X\xrightarrow{g} H_f \xrightarrow{h} Y$, where $g$ is in $coSp^{\cong}(\mcal{N})$ and $h$ is in $Lw^{\cong}(M)\cap  \mcal{N}^{\perp}$.
\end{prop}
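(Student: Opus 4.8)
The plan is to produce the factorization by a transfinite construction over a suitably chosen cofinite directed indexing poset, in the spirit of Isaksen's pro-category arguments and of the non-functorial predecessor \cite[Proposition 3.16]{BaSc1}, and to recover functoriality by letting the indexing poset itself range over the choices made in the construction. First I would use reindexing to put $f$ in a convenient shape: by the ind-dual of the reindexing results recalled in Section \ref{s:prelim} (Lemma \ref{l:cofinal}, Lemma \ref{every map natural}, and the sharper statements of \cite{BaSc}), every morphism of $\Ind(\cC)$ is isomorphic, in the morphism category, to a natural transformation $f\colon X\to Y$ indexed by a single cofinite directed poset $T$ (Definition \ref{def CDS}). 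Cofiniteness is exactly what the construction needs: for each $t\in T$ the set $T_{<t}$ is finite, so every colimit $\colim_{s<t}(-)$ below is a finite colimit and exists in $\cC$.

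Next I would build, by well-founded recursion on $t\in T$, objects $H_t\in\cC$ with maps $X_t\to H_t\to Y_t$ factoring $f_t$ and compatible transition maps. Set $P_t:=X_t\coprod_{\colim_{s<t}X_s}\colim_{s<t}H_s$, a finite colimit, equipped with its canonical map to $Y_t$ (assembled from $f_t$ and the maps $H_s\to Y_s\to Y_t$, $s<t$). Enlarge $P_t$ by pushing out along finitely many commutative squares from morphisms of $\cN$ to this canonical map, obtaining $P_t\to\tilde P_t$, which lies in $\cN$ since $\cN$ is a subcategory closed under cobase change. Finally factor the induced map $\tilde P_t\to Y_t$, using $M\circ\cN=\Mor(\cC)$, as $\tilde P_t\xrightarrow{\in\cN}H_t\xrightarrow{\in M}Y_t$. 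Put $H_f:=\{H_t\}_{t\in T}$, and let $g\colon X\to H_f$ and $h\colon H_f\to Y$ be the evident natural transformations. Since $\cC$ is not assumed to carry functorial factorizations, I would not fix the choices above but index the whole construction by the poset of \emph{finite pieces of construction data} --- a finite subposet of $T$ together with, at each of its elements, a bounded amount of recursively defined data (chosen factorizations, chosen squares to resolve) --- ordered by refinement; this poset is again cofinite directed and cofinal over $T$, and a morphism of inputs induces a morphism of such posets, hence a morphism of factorizations.

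It then remains to verify the three properties. By construction $P_t\to H_t$ is a composite of morphisms of $\cN$, so $g$ is literally a cospecial $\cN$-map and $g\in coSp^{\cong}(\cN)$ (if moreover $\cN$ contains the isomorphisms then $g$ is even levelwise in $\cN$ by Proposition \ref{forF_sp_is_lw}, though this is not needed here); and $h_t\in M$ for all $t$, so $h\in Lw(M)\subseteq Lw^{\cong}(M)$. The heart of the matter is $h\in\cN^\perp$. Given a lifting problem in $\Ind(\cC)$ of a morphism $u\colon A\to B$ of $\cN$ against $h$, compactness of $A$ and $B$ in $\Ind(\cC)$ shows that, over a sufficiently large index, the square is induced by honest maps $A\to H_{t_0}$ and $B\to Y_{t_0}$ with $A\to H_{t_0}\to Y_{t_0}$ equal to $A\xrightarrow{u}B\to Y_{t_0}$. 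Passing to a still larger index at which, for some level $t'>t_0$, the square built from $A\to H_{t_0}\to\colim_{s<t'}H_s\to P_{t'}$ and $B\to Y_{t_0}\to Y_{t'}$ is one of those pushed out against, we obtain a map $B\to\tilde P_{t'}\to H_{t'}\to H_f$; both triangles of the lifting problem commute, using the pushout description of $\tilde P_{t'}$ and the fact that $\tilde P_{t'}\to H_{t'}\to Y_{t'}$ is the chosen factorization of $\tilde P_{t'}\to Y_{t'}$. Undoing the initial reindexing, the factorization of the original $f$ has its two legs in $coSp^{\cong}(\cN)$ and in $Lw^{\cong}(M)\cap\cN^\perp$ respectively.

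The hard part will be the bookkeeping forced by two competing constraints. On one hand each $H_t$ must be an object of $\cC$, and $\cN$ is only assumed closed under cobase change, not under transfinite composition, so only \emph{finitely many} pushouts may be performed at a given level; on the other hand, for $h\in\cN^\perp$ one must resolve \emph{all} lifting problems of $\cN$-morphisms against $h$. These problems must therefore be distributed across infinitely many levels, which dictates how fine and how large the indexing poset is chosen --- and simultaneously that choice must be canonical enough to be functorial in $f$ despite the absence of functorial factorizations in $\cC$. Arranging the cofinite directed poset of finite construction data and checking its cofinality over $T$ and its functoriality in $f$ is where essentially all the work lies; the finite-colimit manipulations of the second paragraph and the compactness argument of the third are then routine.
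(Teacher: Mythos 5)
The paper itself contains no proof of this proposition: it is quoted, in dual form, from \cite[Proposition 3.17]{BaSc1}, so there is no internal argument to compare against and your attempt must stand on its own. Your overall strategy --- reindex $f$ as a levelwise map over a cofinite directed poset, build $H_t$ recursively from the latching-type pushout $P_t$, attach at each stage finitely many pushouts along squares from $\cN$ (legitimate, since $\cN$ is closed under cobase change and $\cC$ has only finite colimits to offer), close with a chosen $M\circ\cN$-factorization, and obtain $h\in\cN^{\perp}$ from finite presentability of the domains and codomains of $\cN$-maps --- is the right kind of argument, and your recognition that the naive levelwise factorization alone cannot yield $\cN^{\perp}$, so that lifting problems must be scheduled across infinitely many later levels, is exactly the crucial point.

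However, the step you defer, ``arranging the cofinite directed poset of finite construction data,'' is not bookkeeping; it is the theorem, and the poset you sketch does not work as described. If an element is a finite subposet of $T$ together with the recursively chosen factorizations and chosen squares at its points, ordered by refinement, then two elements extending a common one by \emph{different} choices of factorization of the same map (and $\cC$ is not assumed to have functorial factorizations, so different choices genuinely occur) admit no common refinement, because the data at later points depends on the earlier choices; hence the poset fails to be directed and $\{H_\delta\}$ is not even an ind-object. Repairing this, and simultaneously verifying cofiniteness, cofinality over $T$ (so that $X$ and $Y$ are unchanged up to isomorphism), exhaustion of all lifting problems --- which refer to the $H$'s being built, a genuine bootstrapping issue --- and functoriality in $f$ (which moreover implicitly needs smallness of $\cC$ for the data poset to be small, a hypothesis your sketch uses but the statement does not display), is precisely the content of the cited \cite[Proposition 3.17]{BaSc1}, and none of it is carried out. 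So while the plan is plausible and the surrounding verifications (cospeciality of $g$, levelwise $M$ for $h$, the compactness argument for $\cN^{\perp}$ once every problem is eventually attached) are fine, the proposal has a genuine gap at its central construction.
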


We can now also state the more elaborate version of Theorem \ref{t:model_dual};

\begin{thm}\label{t:model_elaborate}
Let $(\mcal{C},\mcal{W},\mcal{C}of)$ be a small ind-admissible weak cofibration category.
Then there exists a model category structure on $\Ind(\mcal{C})$ such that:
\begin{enumerate}
\item The weak equivalences are $\mathbf{W} := Lw^{\cong}(\mcal{W})$.
\item The cofibrations are $\mathbf{C} := R(coSp^{\cong}(\mcal{C}of))$.
\item The fibrations are $\mathbf{F} :=   (\mcal{C}of\cap \mcal{W})^{\perp}$.
\end{enumerate}
Moreover, this model category is $\omega$-combinatorial, with set of generating cofibrations $\mcal{C}of$ and set of generating acyclic cofibrations $\mcal{C}of\cap \mcal{W}$. Furtheremore, the acyclic cofibrations in this model structure are given by
$$\mathbf{C}\cap\mathbf{W}=R(coSp^{\cong}(\mcal{C}of\cap \mcal{W})).$$
\end{thm}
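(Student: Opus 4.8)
The model structure asserted in the theorem is already produced by Theorem~\ref{t:model_dual} (which itself is deduced from \cite[Theorem~4.8]{BaSc1}), so the plan is to take that model structure for granted and only (i)~re-express its cofibrations and acyclic cofibrations in the stated retract-of-cospecial form, and (ii)~verify the $\omega$-combinatoriality. I would begin by recalling from the proof of Theorem~\ref{t:model_dual} that, in the resulting model structure, besides $\mathbf{W}=Lw^{\cong}(\mcal{W})$ and $\mathbf{F}=(\mcal{C}of\cap\mcal{W})^{\perp}$ one also has $\mathbf{F}\cap\mathbf{W}=\mcal{C}of^{\perp}$; consequently the cofibrations are $\mathbf{C}={}^{\perp}(\mathbf{F}\cap\mathbf{W})={}^{\perp}(\mcal{C}of^{\perp})$ and the acyclic cofibrations are $\mathbf{C}\cap\mathbf{W}={}^{\perp}\mathbf{F}$. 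Thus the whole problem reduces to the two identities $\mathbf{C}=R(coSp^{\cong}(\mcal{C}of))$ and $\mathbf{C}\cap\mathbf{W}=R(coSp^{\cong}(\mcal{C}of\cap\mcal{W}))$.

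For $\mathbf{C}=R(coSp^{\cong}(\mcal{C}of))$ I would prove both inclusions. For ``$\supseteq$'', the point is that an essentially cospecial $\mcal{C}of$-map lies in ${}^{\perp}(\mcal{C}of^{\perp})$: since $\mcal{C}of$ is closed under cobase change, a rank induction over the cofinite indexing poset of Definition~\ref{def cospecial} (at each stage one lifts a given map of $\mcal{C}of^{\perp}$ past the attaching map $X_t\coprod_{\colim_{s<t}X_s}\colim_{s<t}Y_s\to Y_t$, which lies in $\mcal{C}of$) produces the required lift; as $\mathbf{C}$ is closed under retracts this gives $R(coSp^{\cong}(\mcal{C}of))\subseteq{}^{\perp}(\mcal{C}of^{\perp})=\mathbf{C}$. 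For ``$\subseteq$'', given $f\colon X\to Y$ in $\mathbf{C}$ I would invoke Proposition~\ref{p:factor_gen_dual} with $\mcal{N}=\mcal{C}of$ and $M=\mcal{W}$ --- legitimate since $\mcal{W}\circ\mcal{C}of=\Mor(\cC)$ is one of the weak cofibration axioms --- to obtain a functorial factorization $X\xrightarrow{g}H_f\xrightarrow{h}Y$ with $g\in coSp^{\cong}(\mcal{C}of)$ and $h\in Lw^{\cong}(\mcal{W})\cap\mcal{C}of^{\perp}=\mathbf{F}\cap\mathbf{W}$. Since $f\in\mathbf{C}={}^{\perp}(\mathbf{F}\cap\mathbf{W})$, the square with top $g$, right edge $h$, left edge $f$ and bottom $\mathrm{id}_Y$ admits a lift, and the standard retract argument then exhibits $f$ as a retract of $g$, so $f\in R(coSp^{\cong}(\mcal{C}of))$.

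The identity $\mathbf{C}\cap\mathbf{W}=R(coSp^{\cong}(\mcal{C}of\cap\mcal{W}))$ is proved in the same way, with $\mcal{C}of$ replaced by $\mcal{C}of\cap\mcal{W}$, which is again closed under cobase change. Here ``$\supseteq$'' follows from $coSp^{\cong}(\mcal{C}of\cap\mcal{W})\subseteq{}^{\perp}((\mcal{C}of\cap\mcal{W})^{\perp})={}^{\perp}\mathbf{F}=\mathbf{C}\cap\mathbf{W}$ together with retract-closedness of $\mathbf{C}\cap\mathbf{W}$; and for ``$\subseteq$'', given $f\in\mathbf{C}\cap\mathbf{W}$ I would apply Proposition~\ref{p:factor_gen_dual} with $\mcal{N}=\mcal{C}of\cap\mcal{W}$ and $M=\Mor(\cC)$, the hypothesis $M\circ\mcal{N}=\Mor(\cC)$ being trivially satisfied because $\mcal{C}of\cap\mcal{W}$ contains all identities. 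This yields $f=h\circ g$ with $g\in coSp^{\cong}(\mcal{C}of\cap\mcal{W})$ and $h\in Lw^{\cong}(\Mor(\cC))\cap(\mcal{C}of\cap\mcal{W})^{\perp}=(\mcal{C}of\cap\mcal{W})^{\perp}=\mathbf{F}$, where I use the dual of Lemma~\ref{every map natural} to see that $Lw^{\cong}(\Mor(\cC))$ is all of $\Mor(\Ind(\cC))$. Since $f\in{}^{\perp}\mathbf{F}$, again $f$ is a retract of $g$, completing the identification.

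Finally, for $\omega$-combinatoriality I would argue exactly as in the (dualized) proof of Theorem~\ref{t:model}: since $\cC$ is small and has finite colimits, by \cite{AR} the category $\Ind(\cC)$ is locally finitely presentable and every object of $\cC$ is finitely presentable in it; hence the sets $\mcal{C}of$ and $\mcal{C}of\cap\mcal{W}$ (sets because $\cC$ is small) consist of maps with finitely presentable domains and codomains, and the equalities $\mathbf{F}=(\mcal{C}of\cap\mcal{W})^{\perp}$ and $\mathbf{F}\cap\mathbf{W}=\mcal{C}of^{\perp}$ exhibit them as generating cofibrations and generating acyclic cofibrations (the small object argument being available in a locally presentable category). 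The step I expect to be the main obstacle is the ``essentially cospecial $\Rightarrow$ lies in ${}^{\perp}(\mcal{C}of^{\perp})$'' claim used for both ``$\supseteq$'' inclusions: it requires making the cofinite-poset cospecial condition of Definition~\ref{def cospecial}, taken up to isomorphism in $\Ind(\cC)$, behave like an honest transfinite tower of cobase changes of maps in $\mcal{C}of$ --- but this is precisely the bookkeeping already carried out in \cite{BaSc1} in establishing Proposition~\ref{p:factor_gen_dual} (and Proposition~\ref{forF_sp_is_lw}), so in the write-up I would simply cite it.
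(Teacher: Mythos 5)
Your proposal is correct and follows essentially the same route as the paper, which presents Theorem \ref{t:model_elaborate} as the dual of \cite[Theorem 4.8]{BaSc1} (with functorial factorizations from Proposition \ref{p:factor_gen_dual}); your retract-argument identification of $\mathbf{C}=R(coSp^{\cong}(\mcal{C}of))$ and $\mathbf{C}\cap\mathbf{W}=R(coSp^{\cong}(\mcal{C}of\cap\mcal{W}))$ is exactly the argument carried out in that reference. The step you flag as the main obstacle is already available inside this paper: $coSp^{\cong}(N)\subseteq cell(N)\subseteq{}^{\perp}(N^{\perp})$ by Proposition \ref{coSp_cell} together with the standard closure of left lifting classes under cobase change, transfinite composition and retracts, so no additional cofinite-poset bookkeeping is needed.
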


The following two definitions are based on \cite[Section 2.1]{Hov}.

\begin{define}\label{d:trans}
Let $\mcal{D}$ be a category with all small colimits, $N\subseteq \Mor(\mcal{D})$ a class of morphisms in $\mcal{D}$, and $\lambda$ an ordinal. A \emph{$\lambda$-sequence} in $\mcal{D}$, relative to $N$, is a diagram $X:\lambda\to \mcal{D}$, such that for all limit ordinals $t<\lambda$, the natural map $  \colim_{s<t} X_s\to X_t  $ is an isomorphism, and for all non limit ordinals $t<\lambda$, the map $X_{t-1}\to X_t $ is in $N$. The (transfinite) composition of the $\lambda$-sequence $X$ is defined to be the natural map $X(0)\to\colim_{\lambda} X$.
\end{define}

\begin{define}
Let $\mcal{D}$ be a category with all small colimits, and $N\subseteq \Mor(\mcal{D})$ a class of morphisms in $\mcal{D}$. A \emph{relative} $N$-\emph{cell complex}, is a transfinite composition of pushouts of elements of $N$. That is, $f:A\to B$ is a relative $N$-cell complex if there exists an ordinal $\lambda$, and a $\lambda$-sequence in $\mcal{D}$, relative to pushouts of maps in $N$, such that $f$ is isomorphic to the composition of $X$. We denote the collection of all relative $N$-cell complexes by $cell(N)$.
\end{define}

From now until the end of this section we let $\cC$ be a small category with finite colimits. By the results of \cite{AR}, the category $\Ind(\mcal{C})$ is locally presentable and every object of $\mcal{C}$ is $\omega$-presentable in $\Ind(\mcal{C})$.
In particular, the category $\Ind(\mcal{C})$ has all small colimits.

\begin{prop}[{\cite[Proposition 5.2]{Isa}}]\label{coSp_cell}
For any class of morphisms $N\subseteq \Mor(\cC)$ we have $coSp^{\cong}(N)\subseteq cell(N)$, in $\Ind(\cC)$.
\end{prop}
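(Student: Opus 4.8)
The plan is to dualize Isaksen's argument from \cite[Section 5]{Isa}. Since $cell(N)$ is, by definition, closed under isomorphism in the category of morphisms of $\Ind(\cC)$, it suffices to treat a morphism of the form $\nu_{id,F}\colon X\to Y$ coming from an honest cospecial $N$-map $F$, with $X,Y\colon I\to\cC$ and $I$ a cofinite directed poset. The strategy is to attach the ``cells'' of $F$ one index at a time. First I would choose a well-ordering of $I$, of some order type $\lambda$, compatible with the partial order in the sense that $i_\beta<i_\alpha$ in $I$ forces $\beta<\alpha$; this is possible because $I$ is cofinite (order $I$ first by the height function $x\mapsto$ (length of a longest chain below $x$), which is well defined by cofiniteness, and then arbitrarily inside each fibre). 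Writing $S_\alpha=\{i_\beta:\beta<\alpha\}$, the key combinatorial observations are that each $S_\alpha$ is a down-set of $I$, and that for each $t$ the truncation $P_\alpha(t):=\{s\in S_\alpha:s\le t\}$ is finite.

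Next I would build the $\lambda$-sequence. For $\alpha\le\lambda$ set
$$D^\alpha_t=X_t\coprod_{\colim_{s\in P_\alpha(t)}X_s}\ \colim_{s\in P_\alpha(t)}Y_s ,$$
a finite colimit in $\cC$ (the empty colimit being the initial object), functorial in $t$ since $P_\alpha$ is monotone; the inclusions $S_\alpha\subseteq S_{\alpha+1}$ induce natural transformations $D^\alpha\to D^{\alpha+1}$. One checks immediately that $D^0=X$ and, since $t$ is the maximum of $P_\lambda(t)$, that $D^\lambda=Y$, compatibly with $\nu_{id,F}$.

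The crux — and the step I expect to be the main obstacle — is to identify $D^\alpha\to D^{\alpha+1}$ with the pushout in $\Ind(\cC)$ of the cospecial $N$-map at $i_\alpha$. Writing $j=i_\alpha$: because $S_\alpha$ is a down-set not containing $j$, every $s<j$ already lies in $S_\alpha$, so the coprojection $D^\alpha_j\to D^\alpha$ identifies $D^\alpha_j$ with the source $X_j\coprod_{\colim_{s<j}X_s}\colim_{s<j}Y_s$ of the cospecial map $g_\alpha\colon D^\alpha_j\to Y_j$, which lies in $N$. To compute $D^\alpha\coprod_{D^\alpha_j}Y_j$ I would use that $I_{\ge j}\hookrightarrow I$ is cofinal (the dual of Lemma \ref{l:cofinal}), so that $D^\alpha$ may be reindexed over $I_{\ge j}$, where $D^\alpha_j$ and $Y_j$ become \emph{constant} diagrams mapping naturally to $D^\alpha|_{I_{\ge j}}$ and to each other, and the pushout is computed levelwise by the dual of \cite[Corollary 3.19]{BaSc}. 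It then remains to check, by a routine pasting of pushout squares using that $j$ is maximal in $P_{\alpha+1}(t)=P_\alpha(t)\cup\{j\}$ with down-set $P_\alpha(j)$, that $D^\alpha_t\coprod_{D^\alpha_j}Y_j\cong D^{\alpha+1}_t$ naturally for $t\ge j$; comparing again via the cofinal inclusion $I_{\ge j}\hookrightarrow I$ then gives that $D^\alpha\to D^{\alpha+1}$ is the pushout of $g_\alpha\in N$ along $D^\alpha_j\to D^\alpha$. This bookkeeping with iterated pushouts and reindexings is where the argument is most delicate.

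Finally I would verify continuity at limit ordinals: for a limit $\alpha$ and each $t$ the increasing chain $(P_\beta(t))_{\beta<\alpha}$ of finite sets stabilizes, so $D^\beta_t=D^\alpha_t$ for all sufficiently large $\beta<\alpha$; realizing $\colim_{\beta<\alpha}D^\beta$ as the ind-object $(\beta,t)\mapsto D^\beta_t$ over the filtered poset $\alpha\times I$, restricting to the cofinal subposet on which this agrees with the constant-in-$\beta$ diagram $D^\alpha$, and using cofinality of the projection $\alpha\times I\to I$, one obtains $\colim_{\beta<\alpha}D^\beta\cong D^\alpha$. Hence $\{D^\alpha\}_{\alpha\le\lambda}$ is a $\lambda$-sequence in $\Ind(\cC)$ relative to pushouts of maps in $N$ whose composition is $\nu_{id,F}$, so $\nu_{id,F}\in cell(N)$, which proves $coSp^{\cong}(N)\subseteq cell(N)$.
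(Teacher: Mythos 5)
Your proposal is correct, and it is essentially the paper's own argument: the paper proves nothing here but simply cites the dual of Isaksen's Proposition 5.2, and your construction — well-ordering the cofinite directed index poset compatibly with its partial order, attaching one cell $X_j\coprod_{\colim_{s<j}X_s}\colim_{s<j}Y_s\to Y_j$ per index, and checking the successor steps are levelwise pushouts and the limit steps are continuous — is precisely that dualized argument, with the delicate points (cofinal reindexing over $I_{\ge j}$, levelwise computation of pushouts, stabilization of the finite sets $P_\beta(t)$ at limit ordinals) correctly identified and handled.
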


In \cite{Isa}, Isaksen conjectures a partial converse to Proposition \ref{coSp_cell}. Namely, that for any class of morphisms $N\subseteq \Mor(\cC)$, we have $R(cell(N))\subseteq R(coSp^{\cong}(N))$, in $\Ind(\cC)$. This conjecture fails as stated, as the following counterexample demonstrates. Take $\cC$ to be the category
$$\xymatrix{
a \ar[r]\ar[d]^N & b \ar[d] \\
c \ar[r] & d, \\
}$$
where the square is commutative, and take $N$ to consist only of the unique map $a\to c$. It is easy to verify, that there is a natural equivalence of categories $\Ind(\cC)\simeq \cC$, and under this equivalence, $R(coSp^{\cong}(N))$ is just $N$. Thus the unique map $b\to d$ belongs to $R(cell(N))$ but not to $R(coSp^{\cong}(N))$.

However, using Theorem \ref{t:model_dual}, we can prove Isaksen's conjecture in the case where $N$ is a subcategory that is closed under cobase change.

\begin{prop}\label{c:trans}
Let $\cN\subseteq\cC$ be a subcategory that is closed under cobase change and contains all the isomorphisms. Then $R(cell(\cN))= R(coSp^{\cong}(\cN))$.
\end{prop}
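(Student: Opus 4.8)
The plan is to prove the two inclusions separately, with the substantive content lying in $R(\mathrm{cell}(\cN)) \subseteq R(\mathrm{coSp}^{\cong}(\cN))$. The reverse inclusion $R(\mathrm{coSp}^{\cong}(\cN)) \subseteq R(\mathrm{cell}(\cN))$ is immediate from Proposition \ref{coSp_cell} together with the fact that $R(\mathrm{cell}(\cN)) = R(R(\mathrm{cell}(\cN)))$ and that taking retracts is monotone: $\mathrm{coSp}^{\cong}(\cN) \subseteq \mathrm{cell}(\cN)$ gives $R(\mathrm{coSp}^{\cong}(\cN)) \subseteq R(\mathrm{cell}(\cN))$ directly. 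So the work is all in the forward direction.

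For the forward inclusion, the idea is to exploit the model structure machinery of Theorem \ref{t:model_dual} (or rather its elaborate version, Theorem \ref{t:model_elaborate}) applied to a well-chosen weak cofibration category built from $\cN$. First I would observe that we may harmlessly enlarge $\cC$: replace $\cC$ by the category $\cC' = \cC$ but equip it with the weak cofibration structure in which the cofibrations are $\cN$ and the weak equivalences are all of $\Mor(\cC)$ (or, if that fails the axioms, all isomorphisms together with whatever is forced). The point is to arrange a small weak cofibration category $(\cC, \cW, \cN)$ which is ind-admissible — one natural choice is $\cW = \Mor(\cC)$, since then the two-out-of-three property is trivial and $Lw^{\cong}(\cW)$ is everything, so the relative category is tautologically ind-admissible, and $\Mor(\cC) = M \circ \cN$ holds with $M = \Mor(\cC)$. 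Then Theorem \ref{t:model_elaborate} produces a model structure on $\Ind(\cC)$ whose cofibrations are $\mathbf{C} = R(\mathrm{coSp}^{\cong}(\cN))$.

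The crucial step is then: the class $\mathbf{C} = R(\mathrm{coSp}^{\cong}(\cN))$ is, by the general theory of cofibrantly generated model categories (Theorem \ref{t:model_elaborate} asserts $\omega$-combinatoriality with $\cN$ as generating cofibrations), equal to ${}^{\perp}(\cN^{\perp})$, the class of maps with the left lifting property against everything that has the right lifting property against $\cN$. But by the standard retract argument / small object argument characterization (Hovey, \cite[Section 2.1]{Hov}), ${}^{\perp}(\cN^{\perp}) = R(\mathrm{cell}(\cN))$ whenever the domains of maps in $\cN$ are small — which holds here because every object of $\cC$ is $\omega$-presentable in $\Ind(\cC)$ by \cite{AR}. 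Chaining these equalities, $R(\mathrm{cell}(\cN)) = {}^{\perp}(\cN^{\perp}) = \mathbf{C} = R(\mathrm{coSp}^{\cong}(\cN))$, which gives the forward inclusion (and in fact both at once).

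The main obstacle I anticipate is verifying that one can actually put a \emph{small ind-admissible weak cofibration} structure on $\cC$ with $\cN$ as the cofibrations so that Theorem \ref{t:model_elaborate} applies — in particular checking axiom \`(4)\`, $\Mor(\cC) = \cF \circ \cW$ dualized, i.e. that every map factors as a map in $\cN$ followed by a map in $\cW$; with $\cW = \Mor(\cC)$ this is free (factor as identity followed by the map), but one must make sure $\cN$ still satisfies ``closed under cobase change'' (given) and contains isomorphisms (given), and that the resulting structure genuinely is a weak cofibration category and is ind-admissible in the precise sense of Definition \ref{d:admiss}. A secondary technical point is confirming the identification $\mathbf{C} = {}^{\perp}(\cN^{\perp})$: Theorem \ref{t:model_elaborate} says $\cN$ is the set of generating cofibrations, and in an $\omega$-combinatorial model category the cofibrations are exactly the maps with the left lifting property against the generating acyclic... no — against $\mathbf{F} \cap \mathbf{W}$; one needs instead Lemma \ref{l:lifting}-type reasoning or the direct statement that in a cofibrantly generated model category the cofibrations are ${}^{\perp}((\text{gen. cofibrations})^{\perp})$, which is standard. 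Once those two bookkeeping matters are settled, the argument closes cleanly.
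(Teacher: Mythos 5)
Your proposal is correct and follows essentially the same route as the paper: both equip $\cC$ with the weak cofibration structure whose cofibrations are $\cN$ and whose weak equivalences are all of $\Mor(\cC)$ (so ind-admissibility is automatic by the dual of Lemma \ref{every map natural}), and then invoke Theorem \ref{t:model_elaborate} to identify $R(coSp^{\cong}(\cN))$ with the cofibrations ${}^{\perp}(\mathbf{F}\cap\mathbf{W})={}^{\perp}(\cN^{\perp})$ of the resulting model structure on $\Ind(\cC)$. The only, harmless, difference is the endgame: you conclude via the small-object-argument identity ${}^{\perp}(\cN^{\perp})=R(cell(\cN))$ from \cite[Section 2.1]{Hov} (legitimate here since objects of $\cC$ are $\omega$-presentable in $\Ind(\cC)$), whereas the paper only uses that the lifting class ${}^{\perp}(\mathbf{F}\cap\mathbf{W})$ contains $\cN$ and is closed under cobase change, transfinite composition and retracts.
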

\begin{proof}
By Proposition \ref{coSp_cell} we know that $R(coSp^{\cong}(\cN))\subseteq R(cell(\cN))$. It thus remains to show that $R(cell(\cN))\subseteq R(coSp^{\cong}(\cN))$.

Since $\cN\subseteq R(coSp^{\cong}(\cN))$, it is enough to show that the class $R(coSp^{\cong}(\cN))\subseteq \Mor(\Ind(\cC))$ is closed under cobase change and transfinite compositions.

It is easy to see that $(\cC,\cC,\cN)$ is a small weak cofibration category.
Moreover, $Lw^{\cong}(\mcal{C})=\Mor(\Ind(\mcal{C}))$ by (the dual version of) Lemma \ref{every map natural}, so $(\cC,\cC,\cN)$ is clearly ind-admissible. Thus, it follows from Theorem \ref{t:model_elaborate} that there exists a model category structure on $\Ind(\mcal{C})$ such that:
\begin{enumerate}
\item The weak equivalences are $\mathbf{W} := Lw^{\cong}(\mcal{C})$.
\item The cofibrations are $\mathbf{C} := R(coSp^{\cong}(\mcal{N}))$.
\item The fibrations are $\mathbf{F} :=   \mcal{N}^{\perp}$.
\end{enumerate}
In particular, it follows that $R(coSp^{\cong}(\mcal{N}))={}^{\perp}(\mathbf{F}\cap \mathbf{W})$, and thus $R(coSp^{\cong}(\cN))$ is closed under cobase change and transfinite compositions by well known arguments (see for example \cite[Section A.1.1]{Lur}).
\end{proof}

Proposition \ref{c:trans} can be used to connect Quillen's small object argument with our factorization proposition (Proposition \ref{p:factor_gen_dual}). As an example, we show how a  special case of the small object argument follows easily from our factorization proposition.

\begin{cor}\label{c:cosmall}
Let $N \subseteq \Mor(\mcal{C})$ be any class of morphisms. Then every map $f:X\to Y$ in $\Ind(\mcal{C})$
can be \emph{functorially} factored as $X\xrightarrow{h} H \xrightarrow{g} Y$, where $g$ is in $cell(N)$, and $h$ is in $N^{\perp}$.
\end{cor}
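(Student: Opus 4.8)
The plan is to deduce the desired factorization from the factorization proposition, Proposition \ref{p:factor_gen_dual}, which for any subcategory $\cN\subseteq\cC$ closed under cobase change and any class $M$ with $M\circ\cN=\Mor(\cC)$ already produces a \emph{functorial} factorization of an arbitrary $f$ in $\Ind(\cC)$ into a map in $coSp^{\cong}(\cN)$ followed by a map in $Lw^{\cong}(M)\cap\cN^{\perp}$. The only discrepancy with the present statement is that here $N$ is an \emph{arbitrary} class of morphisms rather than a cobase-change-closed subcategory, and the cell factor is asked to be an honest relative $N$-cell complex rather than merely an essentially cospecial map. Accordingly, the first thing I would do is pass from $N$ to the subcategory $\cN\subseteq\cC$ it generates: let $\cN$ be the smallest subcategory of $\cC$ containing all isomorphisms together with $N$ and closed under cobase change. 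This is well defined because $\cC$ has finite colimits, so each cobase change of a morphism of $\cC$ again lies in $\cC$, and every morphism of $\cN$ is a finite composite of cobase changes of maps in $N$.

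Two elementary observations then bridge the gap. First, since $N\subseteq\cN$ we have $\cN^{\perp}\subseteq N^{\perp}$, so any map in $\cN^{\perp}$ lies a fortiori in $N^{\perp}$. Second, every morphism of $\cN$ is a finite relative $N$-cell complex, whence $\cN\subseteq cell(N)$; combined with $N\subseteq\cN$ this gives $cell(\cN)=cell(N)$. With $\cN$ in hand I would apply Proposition \ref{p:factor_gen_dual} taking $M=\Mor(\cC)$, for which the hypothesis $M\circ\cN=\Mor(\cC)$ holds trivially. This yields a functorial factorization of $f$ whose left-hand factor lies in $coSp^{\cong}(\cN)$ and whose right-hand factor lies in $Lw^{\cong}(\Mor(\cC))\cap\cN^{\perp}\subseteq N^{\perp}$. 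Finally, Proposition \ref{coSp_cell} gives $coSp^{\cong}(\cN)\subseteq cell(\cN)=cell(N)$, so the left-hand factor is a relative $N$-cell complex, and functoriality is inherited verbatim from Proposition \ref{p:factor_gen_dual}.

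The step I expect to require the most care is the bookkeeping around the generated subcategory $\cN$: one must check that closing $N$ under isomorphisms, composition and cobase change does not leave $\cC$ (finiteness of the colimits involved is what legitimizes each generator), and, crucially, that this closure does not enlarge the cell-complex class, i.e. $cell(\cN)=cell(N)$ — here the inclusion $\cN\subseteq cell(N)$ rests on the facts that a cobase change of a relative cell complex is again one and that composites of cell complexes are cell complexes. I would also flag that the argument places the relative $N$-cell complex on the \emph{source} side and the map of $N^{\perp}$ on the \emph{target} side; this is the direction forced by Proposition \ref{p:factor_gen_dual}, and indeed it is the only consistent one, since already in $\cS=\Ind(\cS_f)$, with $N$ the boundary inclusions, the fold map $\ast\sqcup\ast\to\ast$ admits no factorization as a map of $N^{\perp}$ (an acyclic fibration) followed by a relative $N$-cell complex (a monomorphism).
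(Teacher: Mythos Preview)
Your argument is correct and follows the same route as the paper: pass from $N$ to the generated cobase-change-closed subcategory $\cN$, observe $cell(\cN)=cell(N)$ and $\cN^{\perp}\subseteq N^{\perp}$, apply Proposition~\ref{p:factor_gen_dual}, and conclude. One small difference: you invoke Proposition~\ref{coSp_cell} for the inclusion $coSp^{\cong}(\cN)\subseteq cell(\cN)$, whereas the paper cites Proposition~\ref{c:trans}; your choice is the more economical one, since only the easy inclusion is needed here and not the full equality $R(cell(\cN))=R(coSp^{\cong}(\cN))$. Your closing remark about the order of the factors is also well taken: as printed, the statement places $h\in N^{\perp}$ first and $g\in cell(N)$ second, but both Quillen's small object argument and Proposition~\ref{p:factor_gen_dual} force the cell complex on the source side and the $N^{\perp}$-map on the target side, so this is a labeling slip in the statement rather than a flaw in your proof.
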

\begin{proof}
Let $\cN$ denote the smallest subcategory of $\cC$ that is closed under cobase change and contains all the isomorphisms, that also contains $N$. Since the classes $cell({N})$ and ${}^{\perp}(N^{\perp})$ are closed under cobase change and transfinite composition, we have:
\begin{enumerate}
\item $cell({N})=cell(\mcal{N})$.
\item $N^{\perp}=({}^{\perp}(N^{\perp}))^{\perp}=\cN^{\perp}$.
\end{enumerate}
Thus the corollary follows by combining Propositions \ref{p:factor_gen_dual} and \ref{c:trans}.
\end{proof}

We now present a nice application of Proposition ~\ref{c:trans}.
Let $\cS_f$ denote the category of simplicial sets with finitely many non-degenerate simplices.
Let $\cA$ denote the smallest subcategory of $\cS_f$, that contains all the isomorphisms and is closed under push outs, that also contains all the horn inclusions $\Lambda^n_i\to\Delta^n$. In other words, if $H$ denotes the set of horn inclusions, then maps in $\cA$ are just finite relative $H$-cell complexes in $\cS_f$. That is, maps that can be obtained as a finite composition of push outs of horn inclusions, starting from an arbitrary object in $\cS_f$. Clearly, every map in $\cA$ is a trivial cofibration in $\cS_f$.

\begin{prop}\label{l:trivial_cof}
Every trivial cofibration in $\cS_f$, is a retract of a map in $\cA$.
\end{prop}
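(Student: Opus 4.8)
The plan is to obtain the statement from Proposition~\ref{c:trans}, applied to $\cN=\cA$, together with a descent argument that uses the finite presentability of the objects of $\cS_f$ inside $\Ind(\cS_f)\simeq\cS$. To set things up: $\cS_f$ is (equivalent to) a small category with finite colimits, $\Ind(\cS_f)$ is equivalent to $\cS$ with its standard model structure (Theorem~\ref{l:S_f_admiss}), and $\cA\subseteq\cS_f$ is by construction a subcategory that is closed under cobase change and contains all the isomorphisms. Hence Proposition~\ref{c:trans} applies and yields $R(cell(\cA))=R(coSp^{\cong}(\cA))$ as classes of morphisms in $\Ind(\cS_f)$.

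Next I would identify $R(cell(\cA))$ with the class of trivial cofibrations of $\cS$. Since every horn inclusion lies in $\cA$, we have $cell(H)\subseteq cell(\cA)$, where $H$ is the set of horn inclusions; and classically $R(cell(H))$ is exactly the class of trivial cofibrations of $\cS$ (the set $H$ is a set of generating trivial cofibrations, so Quillen's small object argument together with the retract argument gives this). Conversely, every map of $\cA$ is a trivial cofibration, and the trivial cofibrations of $\cS$ are closed under pushout, transfinite composition and retracts, so $R(cell(\cA))$ is contained in them. Combining these facts with the previous paragraph, every trivial cofibration of $\cS$ is a retract, in $\cS=\Ind(\cS_f)$, of a map in $coSp^{\cong}(\cA)$.

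It remains to descend this to $\cS_f$. Let $f\colon X\to Y$ be a trivial cofibration in $\cS_f$; it is in particular a trivial cofibration of $\cS$, so by the above there exist a directed cofinite poset $I$, a natural transformation $G\colon\widetilde A\to\widetilde B$ in $\cS_f^I$ which is a cospecial $\cA$-map, and a retract diagram in $\Ind(\cS_f)$ of $f$ onto $\nu_{\mathrm{id},G}$. Because $X$ and $Y$ are objects of $\cS_f$, hence finitely presentable in $\Ind(\cS_f)$, the legs $X\to\widetilde A=\colim_I\widetilde A$ and $Y\to\widetilde B=\colim_I\widetilde B$ of this diagram factor through some stage, and the two retractions $\widetilde A\to X$, $\widetilde B\to Y$ restrict, at each index, to maps in $\cS_f$. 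Passing to a large enough index $t\in I$, and using that a parallel pair of maps out of a finitely presentable object into a filtered colimit is already coequalized at a finite stage, one arranges that both commuting squares and both retraction identities hold at level $t$. By Proposition~\ref{forF_sp_is_lw} applied to $\cN=\cA$, the level-$t$ component $G_t\colon\widetilde A_t\to\widetilde B_t$ lies in $\cA$, and the level-$t$ data assemble into a retract diagram in $\cS_f$ exhibiting $f$ as a retract of $G_t$.

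The conceptual steps (Propositions~\ref{c:trans} and \ref{forF_sp_is_lw}, and the classical identification of trivial cofibrations with $R(cell(H))$) are essentially free; the only real work, and the step I expect to be the main obstacle, is the filtered-colimit bookkeeping in the last paragraph, ensuring that the two squares and the two retraction equalities are all realized simultaneously at a single index $t\in I$.
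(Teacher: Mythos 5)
Your proposal is correct and follows essentially the same route as the paper: identify the trivial cofibration as an element of $R(cell(H))=R(cell(\cA))$ in $\Ind(\cS_f)\simeq\cS$, apply Proposition~\ref{c:trans} to replace this by $R(coSp^{\cong}(\cA))$, descend the retract diagram to a single finite stage using the finite presentability of objects of $\cS_f$, and conclude with Proposition~\ref{forF_sp_is_lw} that the level map lies in $\cA$. The filtered-colimit bookkeeping you flag as the main obstacle is exactly the step the paper dispatches by noting it ``follows from the definition of morphisms in $\Ind(\cS_f)$,'' and your more explicit treatment of it is sound.
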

\begin{proof}
Let $f:A\to B$ be a trivial cofibration in $\cS_f$. By the results of \cite[Section 2.1]{Hov}, $f$ belongs to $R(cell(H))=R(cell(\mcal{A}))$ as a map in $Ind(\mcal{S}_f)\simeq\cS$. By Proposition ~\ref{c:trans} $f$ also belongs to $R(coSp^{\cong}(\cA))$. Thus, there exists $h\in coSp^{\cong}(\cA)$ such that $f$ is a retract of $h$. Without loss of generality we may assume that $h:\{X_t\}_{t\in T}\to \{Y_t\}_{t\in T}$ is a natural transformation, which is a cospecial $\cA$-map.
We have the following retract diagram:
$$
\xymatrix{
A \ar[d]^f \ar[r] & \{X_t\} \ar[d]^h \ar[r] & A \ar[d]^f \\
B          \ar[r] & \{Y_t\}          \ar[r] & B
.}$$
It follows from the definition of morphisms in $Ind(\cS_f)$, that there exists $t_0 \in T$ such that
the above diagram can be factored as:
$$
\xymatrix{
A \ar[d]^f \ar[r] & X_{t_0} \ar[d]^{h_{t_0}} \ar[r] & \{X_t\}_{t\in T} \ar[d]^h  \ar[r] & A \ar[d]^f \\
B          \ar[r] & Y_{t_0}                  \ar[r] & \{Y_t\}_{t\in T}           \ar[r] & B
.}$$
It follows that $f$ is a retract of $h_{t_0}$, in $\cS_f$. But by Proposition ~\ref{forF_sp_is_lw}, $h$ is a levelwise $\cA$-map. In particular $h_{t_0}$ belongs to $\cA$, and we get the desired result.
\end{proof}


Department of Mathematics, University of Muenster, Nordrhein-Westfalen, Germany.
\emph{E-mail address}:
\texttt{ilanbarnea770@gmail.com}

Department of Mathematics, Massachusetts Institute of Technology, Massachusetts, USA.
\emph{E-mail address}:
\texttt{schlank@math.mit.edu}


\begin{thebibliography}{ModelPro}
\bibitem[AM]{AM} Artin M., Mazur B. {\em \'Etale Homotopy}, Lecture Notes in Mathematics, Vol. 100, Springer-Verlag, Berlin, 1969.
\bibitem[AR]{AR} Adamek J., Rosick\'y J. {\em Locally Presentable and Accessible Categories}, Cambridge University Press, Cambridge, 1994.
\bibitem[BaKa]{BaKa} Barwick C., Kan D. M. {\em Relative categories: Another model for the homotopy theory of homotopy theories}, Preprint, availiable at \url{http://arxiv.org/abs/1011.1691}.
\bibitem[BaSc]{BaSc} Barnea I., Schlank T. M. {\em A new model for pro-categories}, Journal of Pure and Applied Algebra 219, No. 4 (2015), 1175--1210.
\bibitem[BaSc1]{BaSc1} Barnea I., Schlank T. M. {\em A projective model structure on pro simplicial sheaves, and the relative \'etale homotopy type}, Preprint, available at \url{http://arxiv.org/abs/1109.5477}.
\bibitem[Rad]{Rad} Radulescu-Banu A. {\em Cofibrations in Homotopy Theory}, preprint, availiable at \url{http://arxiv.org/abs/math/0610009}.
\bibitem[Bau]{Bau} Baues H. J. {\em Algebraic Homotopy}, Cambridge University Press, Cambridge, 1988.
\bibitem[Bro]{Bro} Brown K. S. {\em Abstract homotopy theory and generalized sheaf cohomology}, Trans. Amer. Math. Soc. 186 (1973), 419--458. availiable at \url{http://arxiv.org/abs/math/0610439}.
\bibitem[Hov]{Hov} Hovey M. {\em Model Categories}, Mathematical Surveys and Monographs Vol 63, AMS, Providence, RI, 1998.
\bibitem[Isa]{Isa} Isaksen D. C. {\em Strict model structures for pro-categories}, Categorical factorization techniques in algebraic topology (Isle of Skye, 2001), 179-198, Progr. Math. 215, Birkh\"auser, Basel, 2004.
\bibitem[IsaL]{IsaL} Isaksen D. C. {\em Calculating limits and colimits in pro-categories}, Fund. Math. 175(2) (2002), 175--194.
\bibitem[Lur]{Lur} Lurie J. {\em Higher Topos Theory}, Annals of Mathematics Studies, 170, Princeton University Press, Princeton, NJ, 2009.
\bibitem[Qui]{Qui} Quillen D. G. {\em Homotopical Algebra}, Lecture Notes in Mathematics, Vol. 43, Springer-Verlag, Berlin, 1967.
\bibitem[RaRo]{RaRo} Raptis G., Rosick\'y J. {\em The accessibility rank of weak equivalences}, Preprint, availiable at \url{http://arxiv.org/abs/1403.3042}.
\bibitem[Rez]{Rez} Rezk C. {\em Fibrations and homotopy colimits of simplicial sheaves}, Preprint, available at \url{http://www.math.uiuc.edu/~rezk/rezk-sharp-maps.pdf}.
\end{thebibliography}
\end{document}